\title[Interpolation and the Exchange Rule]{Interpolation and the Exchange Rule}
\keywords{Deductive Interpolation, Amalgamation, Substructural Logic, Full Lambek Calculus, Residuated Lattice, Model Completion}
\author{Wesley Fussner}
\address{Institute of Computer Science \\ 
Czech Academy of Sciences \\
Prague, Czechia}
\email{fussner@cs.cas.cz}
\author{George Metcalfe}
\address{Mathematical Institute \\ 
University of Bern \\
Bern, Switzerland}
\email{george.metcalfe@unibe.ch}
\author{Simon Santschi}
\address{Mathematical Institute \\ 
University of Bern \\
Bern, Switzerland}
\email{simon.santschi@unibe.ch}
\thanks{Supported by Swiss National Science Foundation grant 200021\textunderscore 215157.}
\providecommand*{\leftmodels}{%
  \mathrel{%
    \mathpalette\@leftmodels\models
  }%
}
\newcommand*{\@leftmodels}[2]{%
  \reflectbox{$\m@th#1#2$}%
}
\newtheorem{prop}{Proposition}[section]
\newtheorem{lemma}[prop]{Lemma}
\newtheorem{mthm}{Theorem}
\theoremstyle{definition}
\newcommand{\m}[1]{{\bf {#1}}}									
\newcommand{\lgc}[1]{\ensuremath{{\rm #1}}}						
\newcommand{\cls}[1]{\ensuremath{{\sf #1}}}						
\newcommand{\pc}[1]{\ensuremath{{\sf #1}}}						
\newcommand{\opr}[1]{\ensuremath{{\mathbb #1}}}					
\newcommand*{\pfa}[1]{\mbox{\footnotesize $#1$}} 	 
\renewcommand{\restriction}{\mathord{\upharpoonright}}
\newcommand{\set}[1]{\{ #1 \}}
\newcommand{\tuple}[1]{\ensuremath{({#1})}}
\DeclareMathOperator{\im}{im}
\newcommand{\setc}{\mathnormal{\setminus}}
\newcommand{\eq}{\approx}
\newcommand{\N}{\mathbb{N}}
\newcommand{\Z}{\mathbb{Z}}
\newcommand{\der}[1]{\vdash_{\lgc{#1}}}
\newcommand*{\Nsum}{\boxplus}
\newcommand*{\nsum}{\boxplus}
\newcommand{\rgal}[1]{{#1}^{r}}
\newcommand{\lgal}[1]{{#1}^{\ell}}
\newcommand{\Com}[2]{\m{C}^{#2}_{#1}} 
\newcommand{\com}[2]{C^{#2}_{#1}} 
\newcommand{\Go}{\m{G}} 
\newcommand{\go}{G} 
\newcommand{\inv}[1]{{#1}^\star} 
\newcommand{\invv}[1]{{#1}^{\star\star}} 
\newcommand{\Inf}[3]{\mathsf{I}(#1,#2,#3)}
\newcommand{\Fin}[3]{\mathsf{F}(#1,#2,#3)}
\newcommand{\Ee}[1]{\mathsf{E}(#1)}
\newcommand{\partition}[1]{\mathcal{P}_{#1}} 
\newcommand{\Con}[1]{\mathrm{Con}(#1)}
\newcommand{\NC}[1]{{\mathrm{NC}}(#1)}
\newcommand{\eps}{\ensuremath{\varepsilon}}
\newcommand{\ov}[1]{\overline{#1}}
\newcommand{\mdl}[1]{\models_{#1}}
\newcommand{\hm}{\opr{H}}
\newcommand{\iso}{\opr{I}}
\newcommand{\pu}{\opr{P}_U}
\newcommand{\sub}{\opr{S}}
\newcommand{\vr}{\opr{V}}
\newcommand{\hspu}{\hm\sub\pu}
\newcommand{\V}{\cls{V}}
\newcommand{\K}{\cls{K}}
\newcommand{\Kfin}{\cls{K}_{\mathrm{fin}}}
\newcommand{\Vfc}{\V^\mathrm{c}_{\mathrm{fin}}}
\newcommand{\ld}{\backslash}
\newcommand{\rd}{/}
\newcommand{\pd}{\cdot}
\newcommand{\ut}{\textrm{\textup{e}}}
\newcommand{\zr}{\textrm{\textup{f}}}
\newcommand{\mt}{\mathbin{\land}}
\newcommand{\jn}{\mathbin{\lor}}
\newcommand{\lang}{\mathcal{L}}
\renewcommand{\a}{\ensuremath{\alpha}}
\newcommand{\be}{\ensuremath{\beta}}
\newcommand{\ga}{\ensuremath{\gamma}}
\newcommand{\De}{\mathrm{\Delta}}
\newcommand{\Ga}{\mathrm{\Gamma}}
\newcommand{\Si}{\mathrm{\Sigma}}
\newcommand*{\seq}{{\vphantom{A}\Rightarrow{\vphantom{A}}}}
\newcommand*{\rseq}{\Rightarrow}
\newcommand{\idr}{(\textsc{id})}
\newcommand{\flr}{(\zr\!\rseq)}
\newcommand{\frr}{(\rseq\!\zr)}
\newcommand{\tlr}{(\ut\!\rseq)}
\newcommand{\trr}{(\rseq\!\ut)}
\newcommand{\olr}{(\lor\!\rseq)}
\newcommand{\orr}{(\rseq\!\lor)}
\newcommand{\alr}{(\land\!\rseq)}
\newcommand{\arr}{(\rseq\!\land)}
\newcommand{\pdlr}{(\pd\!\rseq)}
\newcommand{\pdrr}{(\rseq\!\pd)}
\newcommand{\ldlr}{({\ld}{\rseq})}
\newcommand{\ldrr}{(\rseq\!\!\ld)}
\newcommand{\rdlr}{(\rd\!\rseq)}
\newcommand{\rdrr}{(\rseq\!\!\rd)}
\newcommand{\cutr}{\textup{\sc (cut)}}
\newcommand{\wlr}{\textup{(i)}}
\newcommand{\wrr}{\textup{(o)}}
\newcommand{\cnr}{\textup{(c)}}
\newcommand{\exr}{\textup{(e)}}
\newcommand{\mglr}{\textup{(m)}}
\begin{document}

\begin{abstract}
It was proved by Maksimova in 1977 that exactly eight varieties of Heyting algebras have the amalgamation property, and hence exactly eight axiomatic extensions of intuitionistic propositional logic  have the deductive interpolation property. The prevalence of the deductive interpolation property for axiomatic extensions of substructural logics and the amalgamation property for varieties of pointed residuated lattices, their equivalent algebraic semantics, is far less well understood, however. Taking as our starting point a formulation of intuitionistic propositional logic as the full Lambek calculus with exchange, weakening, and contraction, we investigate the role of the exchange rule --- algebraically, the commutativity law --- in determining the scope of these properties. First, we show that there are continuum-many varieties of idempotent semilinear residuated lattices that have the amalgamation property and contain non-commutative members, and hence continuum-many axiomatic extensions of the corresponding logic that have the deductive interpolation property in which exchange is not derivable. We then show that, in contrast, exactly sixty varieties of commutative idempotent semilinear residuated lattices have the amalgamation property, and hence exactly sixty axiomatic extensions of the corresponding logic with exchange have the deductive interpolation property. From this latter result, it follows also that there are exactly sixty varieties of commutative idempotent semilinear residuated lattices whose first-order theories have a model completion.
\end{abstract}

\maketitle


\section{Introduction}\label{s:intro}

By a remarkable result of Maksimova~\cite{Mak77}, precisely eight axiomatic extensions of intuitionistic propositional logic $\lgc{IPC}$ have the following {\em deductive interpolation property}: Given any formulas $\a,\be$ of one such logic $\lgc{L}$ satisfying $\a\der{\lgc{L}}\be$, there exists a formula $\ga$,  whose variables occur in both $\a$ and $\be$, satisfying $\a\der{\lgc{L}}\ga$ and $\ga\der{\lgc{L}}\be$.\footnote{Deductive interpolation is equivalent in this setting to {\em Craig interpolation}: Given any formulas $\a,\be$ of $\lgc{L}$ satisfying $\der{\lgc{L}}\a\to\be$, there exists a formula $\ga$, whose variables occur in both $\a$ and $\be$, satisfying $\der{\lgc{L}}\a\to\ga$ and $\der{\lgc{L}}\ga\to\be$. However, in other settings --- in particular, for axiomatic extensions of modal logics or substructural logics ---  these properties may diverge.} Maksimova's proof was essentially algebraic. First, she proved that an axiomatic extension of $\lgc{IPC}$ has deductive interpolation if and only if the associated variety (equational class) of Heyting algebras has the amalgamation property, and subsequently that there are precisely eight such varieties. This result was later strengthened by Ghilardi and Zawadowski~\cite{GZ02}, who, building on Pitts' theorem stating that $\lgc{IPC}$ has the stronger property of uniform deductive interpolation~\cite{Pit92}, proved that all eight axiomatic extensions of $\lgc{IPC}$ with deductive interpolation have this stronger property, and that the first-order theories of all eight varieties of Heyting algebras with amalgamation have a model completion. 

Similar results have been established for normal modal logics. In particular, Maksimova proved that between forty three and forty nine axiomatic extensions of $\lgc{S4}$ have deductive interpolation~\cite{Mak79}, and that continuum-many axiomatic extensions of G{\"o}del-L{\"o}b logic $\lgc{GL}$~\cite{Mak91} have the property. Less well-understood, however, is the prevalence of deductive interpolation among {\em substructural logics}. Such logics are often viewed as axiomatic extensions of the {\em full Lambek calculus} $\pc{FL}$ --- a sequent calculus presented in an algebraic language with binary operation symbols $\mt,\jn,\pd,\ld,\rd$ and constants $\ut,\zr$ --- with algebraic semantics provided by associated varieties of pointed residuated lattices (see Appendix~\ref{a:FL} and~\cite{GJKO07,MPT23}). Interpolation results have been obtained for a wide range of substructural logics (see,~e.g.,~\cite{OK85,GO06b,Mon06,KO10,Mar12,MM12,MMT14,GLT15}), but Maksimova-style descriptions of the axiomatic extensions of a logic that have deductive interpolation are known only for a few specific cases. Notably, nine axiomatic extensions of the logic R-Mingle with unit have deductive interpolation (equivalently, nine varieties of Sugihara monoids have the amalgamation property)~\cite{MM12}, and, using the fact that a variety of MV-algebras has the amalgamation property if and only if it is generated by a single totally ordered algebra~\cite{DL00}, countably infinitely many axiomatic extensions of \L ukasiewicz logic have deductive interpolation.

\begin{figure}[t]
\fbox{
{
\begin{minipage}{6.725cm}
\begin{align*}
\begin{array}{cccc}
\infer[\pfa{\exr}]{\Ga_1,\Pi_2,\Pi_1,\Ga_2\seq\De}{\Ga_1,\Pi_1,\Pi_2,\Ga_2\seq\De} & &
\infer[\pfa{\cnr}]{\Ga_1,\Pi,\Ga_2\seq\De}{\Ga_1,\Pi,\Pi,\Ga_2\seq\De}\\[.15in]
\infer[\pfa{\wlr}]{\Ga_1,\Pi,\Ga_2\seq\De}{\Ga_1,\Ga_2\seq\De} & &
\infer[\pfa{\wrr}]{\Ga_1,\Pi,\Ga_2\seq\De}{\Pi\seq}
\end{array}\\[.15in]
\infer[\pfa{\mglr}]{\Ga_1,\Pi_1,\Pi_2,\Ga_2\seq\De}{\Ga_1,\Pi_1,\Ga_2\seq\De &\Ga_1,\Pi_2,\Ga_2\seq\De} \qquad
\end{align*}
\caption{Basic structural rules}
\label{fig:structural}
\end{minipage}
}}
\end{figure}

Adding to $\pc{FL}$ the structural rules of exchange (e), weakening (i) and (o), and contraction (c) depicted in Figure~\ref{fig:structural} yields a sequent calculus for $\lgc{IPC}$, where the operations $\mt$ and $\pd$ can be identified, and likewise $\ld$ and $\rd$. Algebraically, adding (e), (i), (o), and (c) produces sequent calculi corresponding to varieties of pointed residuated lattices that are commutative, integral, bounded, and square-increasing, respectively, i.e., satisfying, respectively, the equations $x\pd y\eq y\pd x$, $x\le\ut$ and $\zr\le x$, and $x\le x\pd x$. In particular, pointed residuated lattices satisfying all these equations are term-equivalent to Heyting algebras.

Examples of axiomatic extensions of the full Lambek calculus with exchange, $\pc{FL_e}$, that have deductive interpolation are well known and abundant; indeed, continuum-many such extensions have been described in~\cite{FusSan2023}. For many years, however, it was an open question as to whether exchange is derivable in {\em every} axiomatic extension of $\pc{FL}$ that has deductive interpolation (see~\cite[Problem~5]{GLT15}). That this is not the case was shown using a counterexample in~\cite{GilJipMet2020}. Motivated by this result, we examine here the critical role played by exchange in determining the scope of deductive interpolation in substructural logics. 

As a natural starting point, we consider a sequent calculus in which exchange is not derivable that deviates as little as possible from $\lgc{IPC}$. Since exchange is already derivable in the presence of weakening and contraction, we replace (i) and (o) with a less powerful variant, the mingle rule (m) (see Figure~\ref{fig:structural}), obtaining the full Lambek calculus with mingle and contraction, $\pc{FL_{cm}}$, corresponding to the variety of idempotent pointed residuated lattices. Not only is it not possible to derive (e) in $\pc{FL_{cm}}$, a growing body of literature demonstrates many semantic similarities between $\lgc{IPC}$ and various axiomatic extensions of this logic (see., e.g.,~\cite{GR12,GR15,FG2019,JTV21,FusGal2023a,FusGal2023b}). Indeed, the  logic presented in~\cite{GilJipMet2020}  that has deductive interpolation but does not derive exchange is such an axiomatic extension. We show here that there are in fact continuum-many such logics.

Our proof is algebraic, following the approach of Maksimova. We use the fact that an axiomatic extension of $\pc{FL}$ has deductive interpolation if the associated variety of pointed residuated lattices has the amalgamation property, noting that the converse holds in the presence of a local deduction theorem for the logic, or, equivalently, the congruence extension property for the variety~\cite{MMT14}. The proof therefore amounts to exhibiting continuum-many varieties of idempotent pointed residuated lattices that have non-commutative members. In fact, these varieties have a rather special form. Their members satisfy $\ut\eq\zr$ --- and are therefore referred to simply as residuated lattices --- and are {\em semilinear}, that is, subdirect products of totally ordered algebras. Let us denote by $\pc{SemRL_{cm}}$ the axiomatic extension of $\pc{FL_{cm}}$ associated with the variety of idempotent semilinear residuated lattices, noting that this logic can also be presented as a hypersequent calculus (see,~e.g.,~\cite{MOG08}). The first main result of this paper may be stated as follows:

\newtheorem*{t:main}{Theorem~\ref{t:main}}
\begin{t:main}\
\begin{enumerate}[label = \textup{(\roman*)}]
\item Continuum-many varieties of  idempotent semilinear residuated lattices have the amalgamation property and contain non-commutative members.
\item Continuum-many axiomatic extensions of $\pc{SemRL_{cm}}$ in which exchange is not derivable have the deductive interpolation property.
\end{enumerate}
\end{t:main}

As a natural next step, we consider how the picture presented in Theorem~\ref{t:main} changes when exchange (algebraically, commutativity) is reinstated. To this end, let us denote by $\pc{SemRL_{ecm}}$ the axiomatic extension of $\pc{FL_{cm}}$ associated with the variety of commutative idempotent semilinear residuated lattices (corresponding also to the hypersequent calculus for $\pc{SemRL_{cm}}$ extended with exchange~\cite{MOG08}). The following results demonstrate that in this setting, reinstating exchange reduces the number of axiomatic extensions having deductive interpolation (algebraically, amalgamation) from continuum-many to finitely many:

\newtheorem*{t:main_finite}{Theorem~\ref{t:main_finite}}
\begin{t:main_finite}\
\begin{enumerate}[label = \textup{(\roman*)}]
\item Exactly sixty varieties of commutative idempotent semilinear residuated lattices have the amalgamation property.
\item Exactly sixty axiomatic extensions of $\pc{SemRL_{ecm}}$ have the deductive interpolation property.
\end{enumerate}
\end{t:main_finite}

Moreover, using the fact that the first-order theory of a locally finite variety has a model completion if and only if and only if the variety has the amalgamation property~\cite[p.~319, Corollary~1]{Whe76}, we obtain a complete classification of the varieties of commutative idempotent semilinear residuated lattices whose first-order theories have a model completion.

\newtheorem*{t:main_mcs}{Theorem~\ref{t:main_mcs}}
\begin{t:main_mcs}
There are exactly sixty varieties of commutative idempotent semilinear residuated lattices whose first-order theories have a model completion.
\end{t:main_mcs}

Theorems~\ref{t:main_finite} and~\ref{t:main_mcs} raise the obvious question as to whether we may cast our net still wider and obtain similar results for a weaker logic than $\pc{SemRL_{ecm}}$. With respect to omitting semilinearity, the problem of whether only finitely many varieties of commutative idempotent residuated lattices have deductive interpolation is open and appears to be quite challenging. On the other hand, we show here that we can drop the requirement that members of the variety satisfy $\ut\eq\zr$. Despite the additional combinatorial complexities involved in dealing with the constant $\zr$, we show that there are still only finitely many varieties of commutative idempotent semilinear pointed residuated lattices that have the amalgamation property, and hence finitely many axiomatic extensions of the corresponding logic that have the deductive interpolation property. We do not give the precise number of such varieties (equivalently, axiomatic extensions), but show that these number more than 12,000,000.

The paper is structured as follows. In Section~\ref{s:int and amal}, we provide the necessary background on the relationship between the deductive interpolation  property and amalgamation property, and in Section~\ref{s:IRL}, we recall some important properties of idempotent residuated lattices, presenting in particular the nested sum structure of $^\star$-involutive idempotent residuated chains. In Section~\ref{s:without_exchange}, we prove Theorem~\ref{t:main}, using the fact that here are continuum-many pairwise incomparable minimal bi-infinite words over $\{0,1\}$ to construct corresponding varieties of idempotent semilinear residuated lattices that have the amalgamation property and contain non-commutative members. In Section~\ref{s:with_exchange}, we prove Theorems~\ref{t:main_finite} and~\ref{t:main_mcs}, using the structure theory of commutative idempotent residuated chains to classify the varieties of commutative idempotent semilinear residuated lattices that have the amalgamation property, and consider also the case of (the  logic corresponding to) the variety of commutative idempotent semilinear pointed residuated lattices.


\section{Interpolation and amalgamation}\label{s:int and amal}

In this section, we provide a brief account of the well-known relationship between deductive interpolation and amalgamation developed in~\cite{Gra75,Pig72,Bac75,CP99,MMT14}, introducing also some further concepts and tools that will play a crucial role in subsequent sections.

Let us first recall some basic notions of universal algebra, referring to~\cite{BS81} for proofs and references. Let $\lang$ be any algebraic language, assuming for convenience at least one constant symbol, and let $\m{A}$ be any {\em $\lang$-algebra}, i.e., an $\lang$-structure with universe $A$. The set of congruences of $\m{A}$ forms a lattice $\Con{\m{A}}$, ordered by inclusion.  $\m{A}$ is called {\em congruence distributive} if $\Con{\m{A}}$ is distributive, and {\em locally finite} if every finitely generated subalgebra of $\m{A}$ is finite; it is said to have the {\em congruence extension property} if for any subalgebra $\m{B}$ of $\m{A}$ and $\Theta\in\Con{\m{B}}$, there exists a $\Phi\in\Con{\m{A}}$ such that $\Phi\cap B^2=\Theta$. A class of $\lang$-algebras is said to have one of these properties when all of its members have the property.

An $\lang$-algebra $\m{A}$ is called {\em simple} if $\Con{\m{A}}$ contains only  the  least congruence $\De_A := \{\tuple{a,a}\mid a\in A\}$ and the greatest congruence $A^2$; if  $\m{A}$ is also finite and has no non-trivial proper subalgebras, it is called {\em strictly simple}. An $\lang$-algebra $\m{A}$ is said to be \emph{(finitely) subdirectly irreducible} if whenever $\m{A}$ is isomorphic to a subdirect product of a (non-empty finite) set of algebras, it is isomorphic to one of these algebras. Equivalently, $\m{A}$ is finitely subdirectly irreducible if  $\De_A$ is meet-irreducible in $\Con{\m{A}}$, and subdirectly irreducible if $\De_A$ is completely meet-irreducible in $\Con{\m{A}}$.

Let $\opr{I}$, $\opr{H}$, $\opr{S}$, $\opr{P}$, and $\opr{P}_U$ denote the class operators of taking isomorphic images, homomorphic images, subalgebras, products, and ultraproducts, respectively. A class of $\lang$-algebras is called a {\em variety} if it is closed under $\opr{H}$, $\opr{S}$, and $\opr{P}$, and the variety generated by a class of $\lang$-algebras $\K$ is $\opr{V}(\K):=\opr{H}\opr{S}\opr{P}(\K)$, the smallest variety of $\lang$-algebras containing $\K$. The set of subvarieties of a variety $\V$ (i.e., the varieties contained in $\V$) forms a lattice, ordered by inclusion.

For any set of variables $\ov{x}$, let $\m{Fm}(\ov{x})$ denote the \emph{$\lang$-formula} {\em algebra over $\ov{x}$}, writing $\a(\ov{x})$,  $\eps(\ov{x})$, or $\Si(\ov{x})$ to denote that $\ov{x}$ includes the variables of an $\lang$-formula $\a$, $\lang$-equation $\eps$, or set of $\lang$-equations $\Si$, respectively. For convenience, we assume that $\ov{x}$, $\ov{y}$, etc. denote disjoint sets, writing $\ov{x},\ov{y}$ to denote their union.

Equational consequence for a variety $\V$ is defined as follows. Given any set of $\lang$-equations $\Si\cup\{\a\eq\be\}$ containing exactly the variables in the set $\ov{x}$, 
\begin{align*}
\Si\mdl{\V}\a\eq\be\: :\Longleftrightarrow\enspace & \text{for every } \m{A} \in \V\text{ and homomorphism }h\colon \m{Fm}(\ov{x}) \to \m{A},\\
& h(\a')=h(\be')\text{ for all }\a'\eq\be'\in\Si \enspace\Longrightarrow\enspace h(\a)=h(\be).
\end{align*}
For a set of $\lang$-equations $\Si \cup \De$, we write $\Si \mdl{\V} \De$ if $\Si \mdl{\V} \eps$ for all $\eps \in \De$. 

A variety $\V$ is said to have the {\it deductive interpolation property} if for any set of $\lang$-equations $\Si(\ov{x},\ov{y}) \cup \{\eps(\ov{y},\ov{z})\}$ satisfying $\Si \mdl{\V} \eps$, there exists a set of $\lang$-equations $\Pi(\ov{y})$ satisfying $\Si \mdl{\V} \Pi$ and $\Pi \mdl{\V} \eps$. Note that if $\V$ is a variety comprising the equivalent algebraic semantics for a deductive system $\der{}$ (see Appendix~\ref{a:FL} or~\cite{BP89}), then $\V$ has the deductive interpolation property if and only $\der{}$ has this property, i.e., if for any set of $\lang$-formulas $T\cup\{\be\}$ satisfying $\Si\der{}\be$, there exists a set of formulas $T'$, whose variables occur in both $T$ and $\be$, satisfying $T\der{}T'$ and~$T'\der{}\be$.

Now let $\K$ be any class of $\lang$-algebras. A \emph{span} in $\K$ is a 5-tuple $\tuple{\m{A},\m{B},\m{C},i_B,i_C}$ consisting of $\m{A},\m{B},\m{C}\in \K$ and embeddings $i_B\colon\m{A}\to\m{B}$, $i_C\colon\m{A}\to\m{C}$. A {\em one-sided amalgam in} $\K$ of this span is a triple $\tuple{\m{D},j_B,j_C}$  consisting of some $\m{D}\in\K$, embedding $j_B\colon\m{B}\to\m{D}$, and homomorphism $j_C\colon\m{C}\to\m{D}$ such that $j_Bi_B = j_Ci_C$; it is called an {\em amalgam in }$\K$ if $j_C$ is also an embedding. The class $\K$ is said to have the {\em one-sided amalgamation property} if every span in $\K$ has a one-sided amalgam in $\K$, and the {\em amalgamation property} if every span in $\K$ has an amalgam in $\K$. 

\begin{prop}[{\cite[Corollary~3.4]{FusMet2022}}]\label{p:FusMet2022}
Let $\V$ be a variety with the congruence extension property such that the class of finitely subdirectly irreducible members of $\V$ is closed under taking subalgebras. Then $\V$ has the amalgamation property if and only if the class of finitely subdirectly irreducible members of $\V$  has the one-sided amalgamation property. 
\end{prop}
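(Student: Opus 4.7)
My plan proceeds in two directions, each built around a quotient construction.

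For the forward direction, I would assume $\V$ has the amalgamation property and fix a span $\tuple{\m{A},\m{B},\m{C},i_B,i_C}$ in $\Vfsi$. The amalgamation property yields an amalgam $\tuple{\m{E},j_B,j_C}$ in $\V$, and I would then use Zorn's lemma to select $\Theta \in \Con{\m{E}}$ maximal subject to the condition that $(j_B(b),j_B(b')) \in \Theta$ implies $b=b'$. Because $\m{B} \in \Vfsi$, the diagonal $\De_B$ is meet-irreducible in $\Con{\m{B}}$; combined with the maximality of $\Theta$, this should force $\Theta$ itself to be meet-irreducible in $\Con{\m{E}}$, so that $\m{E}/\Theta \in \Vfsi$. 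Composing $j_B$ and $j_C$ with the quotient map then produces the required one-sided amalgam in $\Vfsi$.

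For the backward direction, I would assume the congruence extension property, closure of $\Vfsi$ under subalgebras, and the one-sided amalgamation property for $\Vfsi$, and fix a span $\tuple{\m{A},\m{B},\m{C},i_B,i_C}$ in $\V$. The first move is to apply Birkhoff's subdirect representation theorem to obtain subdirect embeddings $\m{B} \hookrightarrow \prod_{j \in J}\m{B}_j$ and $\m{C} \hookrightarrow \prod_{k \in K}\m{C}_k$ with $\m{B}_j, \m{C}_k \in \Vfsi$, arising from congruences $\Theta^B_j$ and $\Gamma^C_k$ satisfying $\bigcap_j \Theta^B_j = \De_B$ and $\bigcap_k \Gamma^C_k = \De_C$. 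For each $j \in J$, setting $\Theta^A_j := \Theta^B_j \cap A^2$, the algebra $\m{A}/\Theta^A_j$ embeds into $\m{B}_j$ and hence belongs to $\Vfsi$ by the closure assumption. Using the congruence extension property to lift $\Theta^A_j$ to a congruence on $\m{C}$ and then the same Zorn-plus-meet-irreducibility argument as in the forward direction (now using that $\m{A}/\Theta^A_j \in \Vfsi$), I would select a maximal such lift $\Theta'_j$ whose quotient $\m{C}/\Theta'_j$ lies in $\Vfsi$. The one-sided amalgamation property for $\Vfsi$ would then supply an algebra $\m{D}^B_j \in \Vfsi$ into which $\m{B}_j$ embeds and $\m{C}/\Theta'_j$ maps, commuting over $\m{A}/\Theta^A_j$. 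A symmetric construction starting from each $\m{C}_k$ would produce algebras $\m{D}^C_k \in \Vfsi$ into which $\m{C}_k$ embeds and an appropriate quotient of $\m{B}$ maps.

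Finally, I would set $\m{D} := \prod_{j \in J}\m{D}^B_j \times \prod_{k \in K}\m{D}^C_k \in \V$ and define homomorphisms $f \colon \m{B} \to \m{D}$ and $g \colon \m{C} \to \m{D}$ by combining, coordinate by coordinate, the subdirect projections with the pertinent one-sided amalgam maps. The projection of $f$ onto the first product factor should equal the composition of the subdirect embedding $\m{B} \hookrightarrow \prod_j \m{B}_j$ with the coordinatewise embeddings $\m{B}_j \hookrightarrow \m{D}^B_j$, so $f$ will be an embedding; symmetrically, $g$ will be an embedding via its second factor; and $f \circ i_B = g \circ i_C$ will follow from the fact that each one-sided amalgam in $\Vfsi$ commutes with the embeddings of $\m{A}/\Theta^A_j$ and $\m{A}/\Gamma^A_k$, respectively. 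The main subtlety, and the hardest point to get right, is precisely this symmetry trick: a single application of one-sided amalgamation preserves an embedding on only one side, so it is essential to apply it twice, once with each of $\m{B}$ and $\m{C}$ on the embedded side, and to combine the two resulting families via a direct product.
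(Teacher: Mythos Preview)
The paper does not actually prove this proposition: it is quoted verbatim as \cite[Corollary~3.4]{FusMet2022} and used as a black box, so there is no in-paper argument to compare against.

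That said, your proposal is correct and is essentially the argument behind the cited result. Both directions hinge on the same device: given a congruence $\Phi$ on an algebra $\m{E}$ whose restriction to (the image of) some $\m{B}\in\Vfsi$ is a fixed meet-irreducible $\Psi$, a Zorn-maximal extension $\Theta\supseteq\Phi$ with the same restriction is itself meet-irreducible in $\Con{\m{E}}$, since any decomposition $\Theta=\Theta_1\cap\Theta_2$ with $\Theta_i\supsetneq\Theta$ would restrict to a decomposition $\Psi=\Psi_1\cap\Psi_2$ with $\Psi_i\supsetneq\Psi$. You apply this once in the forward direction (with $\Psi=\De_B$) and once per subdirect factor in the backward direction (with $\Psi=\Theta^A_j$), exactly as in the standard proof. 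The congruence extension property is needed precisely where you invoke it, to ensure the set of lifts of $\Theta^A_j$ to $\m{C}$ is non-empty, and closure of $\Vfsi$ under subalgebras is needed to place $\m{A}/\Theta^A_j$ in $\Vfsi$ so that the one-sided amalgamation hypothesis applies. Your ``symmetry trick'' of running the construction twice and taking the product is the key idea for the backward direction and is handled correctly.
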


The following well-known bridge theorem relates the deductive interpolation property to the amalgamation property. 

\begin{prop}[cf.~{\cite[Theorem~22]{MMT14}}]\label{p:bridge}
Let $\V$ be a variety.
\begin{enumerate}[label = \textup{(\roman*)}]
\item If $\V$ has the amalgamation property, then it has the deductive interpolation property.
\item If $\V$ has the congruence extension property and the deductive interpolation property, then it has the amalgamation property.
\end{enumerate}
\end{prop}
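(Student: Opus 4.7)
The plan is to prove the two implications via free-algebra and positive-diagram constructions, invoking AP directly in (i) and the interplay between DI and CEP in (ii).

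For (i), given $\Sigma(\ov x, \ov y) \mdl{\V} \eps(\ov y, \ov z)$, I would take the canonical candidate interpolant
\[ \Pi(\ov y) \ = \ \{\, \delta \eq \delta' : \delta, \delta' \in \m{Fm}(\ov y),\ \Sigma \mdl{\V} \delta \eq \delta' \,\}. \]
The direction $\Sigma \mdl{\V} \Pi$ is immediate. Arguing contrapositively for $\Pi \mdl{\V} \eps$, I would set $\m A := \m{Fm}(\ov y)/\Theta_\Pi$, $\m B := \m{Fm}(\ov x, \ov y)/\Theta_\Sigma$, and $\m C := \m{Fm}(\ov y, \ov z)/\Theta_\Pi^\dagger$, where $\Theta_\Pi, \Theta_\Sigma, \Theta_\Pi^\dagger$ are the congruences generated by $\Pi, \Sigma,$ and $\Pi$ in the respective free algebras. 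By construction one checks $\Theta_\Pi = \Theta_\Sigma \cap \m{Fm}(\ov y)^2 = \Theta_\Pi^\dagger \cap \m{Fm}(\ov y)^2$, so the natural maps give embeddings $\m A \hookrightarrow \m B$ and $\m A \hookrightarrow \m C$. If $\Pi \not\mdl{\V} \eps$, then by universal property $\m C$ itself, under its natural valuation, refutes $\eps$. Applying AP to the resulting span yields $\m D \in \V$ with compatible embeddings of $\m B$ and $\m C$, and the induced valuation on $\m{Fm}(\ov x, \ov y, \ov z)$ into $\m D$ satisfies $\Sigma$ (via $\m B$) while refuting $\eps$ (via $\m C$), contradicting $\Sigma \mdl{\V} \eps$.

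For (ii), given a span $\tuple{\m A, \m B, \m C, i_B, i_C}$, I would identify $\m A$ with its images, pick generators $\ov x$ for $\m A$ and $\ov y, \ov z$ additional for $\m B$ and $\m C$, and let $\Sigma_B(\ov x, \ov y), \Sigma_C(\ov x, \ov z)$ be the respective positive diagrams. The free amalgam $\m D := \m{Fm}(\ov x, \ov y, \ov z)/\Theta$, with $\Theta$ generated by $\Sigma_B \cup \Sigma_C$, carries natural maps from $\m B$ and $\m C$ agreeing on $\m A$; it remains to show these are embeddings. Given $(\alpha, \beta) \in \Theta$ with $\alpha, \beta \in \m{Fm}(\ov x, \ov y)$, equivalently $\Sigma_B \cup \Sigma_C \mdl{\V} \alpha \eq \beta$, I would apply DI --- treating $\ov z$ as the variables private to the premises --- to extract an interpolant $\Pi$ in the shared block with $\Sigma_B \cup \Sigma_C \mdl{\V} \Pi$ and $\Pi \mdl{\V} \alpha \eq \beta$. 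CEP then enters: it permits us to extend congruences from $\m A$ through $\m C$, forcing the $\Sigma_C$-contribution to $\Pi$ to be reflected already in $\m A$, whence $\Sigma_B \mdl{\V} \Pi$ and thus $\m B \models \alpha \eq \beta$. Symmetric reasoning handles $\m C \to \m D$.

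The main obstacle will be the precise interplay between DI and CEP in (ii). DI as stated delivers the interpolant in the full shared variable block $\ov x \cup \ov y$ rather than just in the $\m A$-generators $\ov x$, so a further manipulation is required to eliminate the $\ov y$-content and reduce joint entailments modulo $\Sigma_B \cup \Sigma_C$ to entailments modulo $\Sigma_B$ alone. CEP is exactly the ingredient that enables this reduction, by guaranteeing that equations provable in the $\ov x$-fragment modulo $\Sigma_C$ descend faithfully to $\m A$; without CEP, the free amalgam may collapse elements of $\m B$ that should remain distinct, and the embedding $\m B \hookrightarrow \m D$ would fail. This is the delicate but standard model-theoretic bookkeeping characteristic of the bridge theorem, carried out in~\cite{MMT14}.
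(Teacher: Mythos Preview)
The paper does not give its own proof of this proposition; it simply records the result with a citation to \cite[Theorem~22]{MMT14}. Your sketch follows the standard free-algebra/diagram argument underlying that reference and is essentially correct as an outline, so there is nothing in the paper to compare it against beyond the citation itself.

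One remark on the sketch of (ii): when you apply DI to $\Sigma_B(\ov x,\ov y)\cup\Sigma_C(\ov x,\ov z)\mdl{\V}\alpha\eq\beta$ with $\alpha,\beta\in\m{Fm}(\ov x,\ov y)$, you should isolate $\Sigma_C$ as the premise set and treat $\ov z$ as the private variables, so that the interpolant $\Pi$ lands in $\ov x$ alone (the variables shared between $\Sigma_C$ and the conclusion-plus-$\Sigma_B$), with $\Sigma_C\mdl{\V}\Pi$ and $\Sigma_B\cup\Pi\mdl{\V}\alpha\eq\beta$. Then CEP is used to show that every $\ov x$-equation in $\Pi$ already holds in $\m A$ (since $\m C$ satisfies $\Pi$ under the canonical valuation and $\m A$ embeds in $\m C$), whence $\Sigma_B\mdl{\V}\Pi$ and so $\m B\models\alpha\eq\beta$. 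Your paragraph on the ``main obstacle'' slightly obscures this by suggesting the interpolant first comes out in $\ov x\cup\ov y$; with the right partition of variables it comes out in $\ov x$ directly, and no further elimination of $\ov y$-content is needed.
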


\noindent
It follows also that if a variety $\V$ with the congruence extension property is the equivalent algebraic semantics of a deductive system $\der{}$, then $\der{}$ has the deductive interpolation property if and only if $\V$ has the amalgamation property.

Let us note finally, referring to~\cite{MR23} for further details and references, that the amalgamation property for a locally finite variety implies and is implied by an important model-theoretic property of the first-order theory of the variety.

\begin{prop}[{\cite[Corollary 1, p. 319]{Whe76}}]\label{p:model_completion}
Let $\V$ be a locally finite variety. The first-order theory of $\V$ has a model completion if and only if $\V$ has the amalgamation property.
\end{prop}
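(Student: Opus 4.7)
The plan is to deduce this from Wheeler's general analysis of model completions for inductive first-order theories, applying the simplifications afforded by local finiteness. Since $\V$ is equationally axiomatized, the first-order theory $T_\V$ is universal and hence inductive (preserved under unions of chains of substructures), so both directions fit into the standard framework.

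For the easy direction, I would start from a model completion $T^*_\V$ and a span $\tuple{\m{A},\m{B},\m{C},i_B,i_C}$ in $\V$. Extend $\m{B}$ and $\m{C}$ to models $\m{B}^*,\m{C}^*$ of $T^*_\V$; identifying the images of $\m{A}$ and invoking model completeness of $T^*_\V$ over $\m{A}$ produces a common $T^*_\V$-extension of $\m{B}^*$ and $\m{C}^*$. Since $T_\V$ is universal, the substructure of this extension generated by the images of $\m{B}$ and $\m{C}$ lies in $\V$ and furnishes an amalgam, establishing the amalgamation property.

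For the converse, assume $\V$ has the amalgamation property. I would define $T^*_\V$ to be the theory of the existentially closed members of $\V$ and verify that (i) every $\m{A}\in\V$ embeds into an existentially closed algebra in $\V$, and (ii) the class of existentially closed members of $\V$ is elementary. Claim (i) follows by a transfinite iteration: one repeatedly uses AP to realize, over the current algebra, any existential formula consistent with its quantifier-free diagram in $\V$, and then takes a union of the resulting chain, using inductivity. Granted (i) and (ii), standard arguments show that $T^*_\V$ is model complete and that every model of $T_\V$ embeds into a model of $T^*_\V$, so that $T^*_\V$ is the model completion of $T_\V$.

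The main obstacle will be step (ii). This is where local finiteness enters decisively: every finite tuple in an algebra of $\V$ lies in a finite subalgebra, and up to isomorphism there are only countably many such finite subalgebras. Existential closedness for a tuple $\ov{a}$ in $\m{A}\in\V$ then amounts to a first-order scheme asserting that, for each finite extension $\m{F}$ of the subalgebra generated by $\ov{a}$ that can be realized in some member of $\V$, such an extension is already realized inside $\m{A}$. Without local finiteness one typically needs to realize an infinite type, and the resulting class need not be elementary; the content of Wheeler's theorem is precisely that local finiteness collapses this requirement to a first-order axiomatization of the class of existentially closed models.
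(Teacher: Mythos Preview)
The paper does not give a proof of this proposition; it is stated as a citation of \cite[Corollary~1, p.~319]{Whe76} and used as a black box. So there is no in-paper argument to compare against, and the relevant question is whether your sketch is correct.

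Your outline has the right ingredients but misplaces the role of the amalgamation property. In step~(i), embedding an arbitrary $\m{A}\in\V$ into an existentially closed member of $\V$ does \emph{not} require AP: this is a general fact for any inductive (in particular, universal) theory, via the usual chain construction. What (i) and (ii) together establish is only that $T_\V$ has a \emph{model companion} $T^*_\V$; your concluding sentence ``so that $T^*_\V$ is the model completion of $T_\V$'' does not follow from (i) and (ii) alone. The amalgamation property enters precisely at the step you omit: for a universal theory with a model companion $T^*$, AP is equivalent to the statement that $T^*\cup\mathrm{Diag}(\m{A})$ is complete for every model $\m{A}$ of $T$, which is exactly what upgrades a model companion to a model completion. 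So the structure of the converse should be: local finiteness $\Rightarrow$ model companion exists (your (ii)); AP $\Rightarrow$ the companion is a completion.

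A parallel imprecision occurs in the forward direction. ``Model completeness of $T^*_\V$ over $\m{A}$'' would only help if $\m{A}$ were itself a model of $T^*_\V$. What you actually need is the defining feature of a model \emph{completion}: $T^*_\V\cup\mathrm{Diag}(\m{A})$ is complete for every $\m{A}\models T_\V$. From this, $\m{B}^*$ and $\m{C}^*$ are elementarily equivalent over $\m{A}$, and elementary amalgamation then produces the desired common extension.
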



\section{Idempotent residuated lattices}\label{s:IRL}

In this section, we present basic facts and structure theory for idempotent (semilinear) residuated lattices. For further details and references on residuated lattices in general, we refer to~\cite{GJKO07,MPT23}, and for idempotent semilinear residuated lattices in particular, to~\cite{Raf07,GR15,GilJipMet2020,FusGal2023a,FusGal2023b}.

A \emph{residuated lattice} is an algebraic structure $\m{A}=\tuple{A,\mt,\jn,\pd,\ld,\rd,\ut}$ such that $\tuple{A,\mt,\jn}$ is a lattice with an order defined by  $x\leq y\: :\Longleftrightarrow\: x\mt y = x$ for $x,y\in A$; $\tuple{A,\pd,\ut}$ is a monoid; and the lattice and monoid structures are linked by the following law of residuation: For all $x,y,z\in A$,
\begin{align*}
y\leq x\ld z\iff x\pd y\leq z \iff x\leq z\rd y.
\end{align*}
A \emph{pointed residuated lattice} is an expansion of a residuated lattice by an extra constant $\zr$, for which we stipulate no additional assumptions. Since the law of residuation can be replaced by a finite set of equations, pointed residuated lattices form a finitely based variety, i.e., a class of algebraic structures defined by finitely many equations. For convenience, we identify residuated lattices with pointed residuated lattices satisfying the equation $\zr\eq \ut$.

For readability, we often suppress the multiplication operation $\pd$ and write $xy$ in place of $x\pd y$, adopting the convention that multiplication has priority over $\ld,\rd$ and that the latter have priority over $\mt,\jn$, dropping parentheses accordingly. We also make frequent use of several term-definable operations in order to make our discussion more transparent and compact. In particular, we define for any pointed residuated lattice $\m{A}$ and $x\in A$,
\begin{align*}
x^r := x\ld \ut,\qquad
x^\ell := \ut\rd x,\quad\text{and}\quad
x^\star := x^\ell\mt x^r.
\end{align*}
A pointed residuated lattice is called \emph{commutative},  \emph{idempotent}, or \emph{$^\star$-involutive} if it satisfies $xy \eq yx$, $xx \eq x$, or $x^{\star\star} \eq x$, respectively; it is called \emph{totally ordered} if its underlying lattice order is total, and \emph{semilinear} if it is isomorphic to a subdirect product of totally ordered pointed residuated lattices. Since every commutative pointed residuated lattice satisfies $x\ld y \eq y\rd x$, the common value of $x\ld y$ and $y\rd x$ is in this case denoted by $x\to y$, and the common value of  $x^\star$, $x^\ell$, and $x^r$ is denoted by $x^\star$. For the sake of brevity, we also refer to a totally ordered residuated lattice as a {\em residuated chain}. 

Up to term-equivalence, an \emph{odd Sugihara monoid} is a commutative idempotent semilinear $^\star$-involutive residuated lattice, a \emph{Brouwerian algebra} is a residuated lattice satisfying $xy \eq x\mt y$, and a \emph{Heyting algebra} is a pointed residuated lattice satisfying  $xy \eq x\mt y$ and $\zr\leq x$. Brouwerian algebras and Heyting algebras are clearly both commutative and idempotent. \emph{Relative Stone algebras} and \emph{G\"odel algebras} are (again, up to term-equivalence) semilinear Brouwerian algebras and semilinear Heyting algebras, respectively.

Residuated lattices are congruence distributive. They are also {\em $\ut$-regular} in the sense that their congruences are determined by the congruence classes of $\ut$, and these congruence classes form subalgebras satisfying certain conditions. A subalgebra $\m{C}$ of a residuated lattice $\m{A}$ is called {\em convex} if whenever  $x,y\in C$, $a\in A$, and $x\leq a\leq y$, then $a\in C$, and {\em normal} if whenever $x\in C$ and $a\in A$, then $a\ld xa \mt \ut,ax\rd a\mt \ut\in A$. The convex normal subalgebras of a residuated lattice $\m{A}$ form a lattice under inclusion, denoted by $\NC{\m{A}}$, that is isomorphic to $\Con{\m{A}}$, as witnessed by the mutually inverse order-preserving maps
\begin{align*}
\Con{\m{A}}\to\NC{\m{A}};&\enspace \Theta\mapsto [\ut]_\Theta,\\
\NC{\m{A}}\to\Con{\m{A}};&\enspace \m{C}\mapsto\{(x,y)\in A^2 \mid x\ld y\mt y\ld x\in C\}.
\end{align*}
Note that, since the congruences of a pointed residuated lattice coincide with the congruences of its $\zr$-free reduct, the above characterization also applies to the pointed setting.

The following lemma provides a useful criterion for homomorphisms between residuated chains to be injective.

\begin{lemma}\label{l:inj-hom}
Let $\m{A}$ and $\m{B}$ be any residuated chains and let $a\in A$ be any subcover of $\ut$, i.e., $a<\ut$ and $a<b\le\ut$ implies $b=\ut$. Then a homomorphism $f \colon \m{A} \to \m{B}$ is injective if and only if $f(a) < f(\ut)$.
\end{lemma}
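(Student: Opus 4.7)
The forward direction is immediate from the fact that any homomorphism of residuated chains is order-preserving: since $a < \ut$ and $f$ is injective, we have $f(a) \neq f(\ut)$, while monotonicity gives $f(a) \leq f(\ut)$, hence $f(a) < f(\ut)$.

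For the converse I plan to argue the contrapositive, invoking the correspondence between $\Con{\m{A}}$ and $\NC{\m{A}}$ recalled just above the lemma. Suppose $f$ is not injective, and let $\m{C}$ denote the convex normal subalgebra of $\m{A}$ corresponding to $\ker f$, so that $C = [\ut]_{\ker f}$. Since $\ker f$ is strictly larger than the identity congruence, there exist $x \neq y$ in $A$ with $f(x) = f(y)$; then $(x \ld y) \mt (y \ld x) \in C$, and this element is not $\ut$ because $\m{A}$ is a chain, so $x \ld y \mt y \ld x \eq \ut$ would force $x = y$. The first key step is then to exhibit some $d \in C$ with $d < \ut$: the element just produced is comparable to $\ut$ in the chain, and if it happens to lie strictly above $\ut$, say $c > \ut$ with $c \in C$, then $c \ld \ut \in C$ and residuation gives $\ut \leq c \ld \ut \iff c \leq \ut$, so $c \ld \ut < \ut$ in the chain.

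The second step exploits the hypothesis that $a$ is a subcover of $\ut$: in the chain $\m{A}$, any $d < \ut$ must satisfy $d \leq a$, since otherwise $a < d \leq \ut$ would force $d = \ut$ by the subcover property, a contradiction. Therefore $d \leq a \leq \ut$ with $d, \ut \in C$, and convexity of $\m{C}$ yields $a \in C$, i.e., $f(a) = f(\ut)$, contradicting the assumption $f(a) < f(\ut)$. The only mildly delicate part of the argument is the first step, namely producing an element of $C$ strictly below $\ut$ from an arbitrary witness of non-trivial kernel; everything else is a direct application of the chain order together with the convex normal subalgebra/congruence correspondence.
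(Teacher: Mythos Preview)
Your proof is correct and follows essentially the same route as the paper's: both arguments pass to the convex normal subalgebra $C=[\ut]_{\ker f}$, produce an element of $C$ strictly below $\ut$ (you construct it explicitly as $(x\ld y)\mt(y\ld x)$, while the paper simply invokes $\ut$-regularity to get some $b\in C\setminus\{\ut\}$ and then replaces $b$ by $b\ld\ut$ if $b>\ut$), and then use convexity together with the subcover hypothesis to conclude $a\in C$. Your case ``$c>\ut$'' is in fact vacuous, since for $x\neq y$ in a chain one of $x\ld y$, $y\ld x$ is already strictly below $\ut$, but handling it does no harm.
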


\begin{proof}
The left-to-right direction is immediate. For the converse, suppose contrapositively that $f$ is not injective. Let $\m{C}$ be the convex normal subalgebra of $\m{A}$ with universe $C:=\{x\in A\mid f(x)=\ut\}$. Since $f$ is not injective, there exists a $b\in C\setc\set{\ut}$. Moreover, we may assume that $b<\ut$, since otherwise $\ut<b$, so $b\ld \ut < \ut$ and $b\ld \ut \in C$. But then, since $a$ is a subcover of $\ut$ and $C$ is convex, $a \in C$, yielding $f(a) =\ut$.
\end{proof}

Not every pointed residuated lattice has the congruence extension property, but it is known that this property is satisfied under the assumptions of either commutativity --- reflecting the fact that every axiomatic extension of the full Lambek calculus with exchange has a local deduction theorem (see,~e.g.,~\cite{MPT23}) --- or, as recorded below for convenience, idempotency and semilinearity.

\begin{lemma}[{\cite[Corollary 4.4]{FusGal2023a}}]\label{l:CEP}
Every idempotent semilinear pointed residuated lattice has the congruence extension property.
\end{lemma}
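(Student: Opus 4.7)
The plan is to give a direct construction of the required extension, exploiting the particularly restricted behavior of the residuated lattice operations in idempotent residuated chains.

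Using the isomorphism $\Con{\m{A}} \cong \NC{\m{A}}$ described earlier in the section, the congruence extension property for $\m{A}$ is equivalent to showing that for every subalgebra $\m{B}$ of $\m{A}$ and every $\m{D} \in \NC{\m{B}}$, there exists a $\m{C} \in \NC{\m{A}}$ with $C \cap B = D$. Since $\m{A}$ is semilinear, it embeds as a subdirect product of idempotent pointed residuated chains, and both subalgebras and convex normal subalgebras pull back along the subdirect factors in a well-behaved way. A standard argument, choosing a convex normal subalgebra of $\m{A}$ maximal among those whose intersection with $\m{B}$ avoids a fixed element outside $\m{D}$, then reduces the problem to the case in which $\m{A}$ is itself totally ordered.

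In the chain case, the natural candidate for the extension is the convex closure of $D$ in $\m{A}$,
\[
C := \{a \in A : d_1 \le a \le d_2 \text{ for some } d_1, d_2 \in D\}.
\]
This set is convex in $\m{A}$ by definition, and the identity $C \cap B = D$ follows from the fact that $\m{D}$ is already convex in $\m{B}$ together with totality of $\m{A}$. The remainder of the argument consists in showing that $C$ is the universe of a normal subalgebra of $\m{A}$. Closure under $\mt$ and $\jn$ is immediate in a chain, and closure under $\pd$, $\ld$, $\rd$ can be established via the structure theory of idempotent residuated chains alluded to in the next section: in such a chain, the value of $x \pd y$, $x \ld y$, or $x \rd y$ is pinned down in a very restricted way by the positions of $x$ and $y$ relative to $\ut$, with the outcome always lying within a convex segment determined by $x$ and $y$ themselves.

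The main obstacle will be verifying normality, since conjugation involves arbitrary elements $a \in A$ and not just elements of $B$. The plan is to show that in an idempotent residuated chain the expressions $a \ld xa \mt \ut$ and $ax \rd a \mt \ut$ always lie in the convex hull of $\{x, \ut\}$; combined with the fact that $x \in C$ and $\ut \in D \subseteq C$, this will yield the required membership in $C$. The structural constraints on multiplication and residuation in idempotent chains — in particular, the tight control they provide on how far conjugation can move an element — are what make this delicate step go through, and they are the central ingredient that makes the proof specific to the idempotent semilinear setting.
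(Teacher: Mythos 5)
The paper offers no proof of this lemma --- it is cited verbatim from [FusGal2023a, Corollary~4.4] --- so your proposal can only be measured against a correct argument rather than against the paper. Your overall architecture (extend a convex normal subalgebra $\m{D}$ of $\m{B}$ to its convex closure $C$ in $\m{A}$, then verify that $C$ is a convex normal subalgebra with $C\cap B=D$) is the right one. But the step you yourself identify as the crux is false as stated: in an idempotent residuated chain the conjugates $a\ld xa\mt\ut$ and $ax\rd a\mt\ut$ need \emph{not} lie in the convex hull of $\{x,\ut\}$. The paper's own algebras $\m{A}_S$ refute it: for $i\in S$ one has $b_i\pd a_i=b_i$ and $a_i\ld b_i=a_i^{r}\mt b_i=b_{i-1}$, so the left conjugate of $x=b_i$ by $a=a_i$ equals $b_{i-1}<b_i=x\mt\ut$, strictly outside $[x,\ut]$. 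The correct bound, extracted from Lemma~\ref{l:idempotentops}, is $a\ld xa\mt\ut\geq x\mt x^{rr}\mt\ut$ (dually $ax\rd a\mt\ut\geq x\mt x^{\ell\ell}\mt\ut$): if $xa=a$ the conjugate is $\ut$; if $xa=x$ and $a\leq x$ it is at least $x\mt\ut$; and if $xa=x$ with $x<a$, then necessarily $a\in[x,x^{r}]$, so $a^{r}\geq x^{rr}$ and $a\ld x=a^{r}\mt x\geq x^{rr}\mt x$. In the example above the bound is attained, since $b_i^{rr}=a_i^{r}=b_{i-1}$. This weaker bound still rescues the argument, because the lower bound is a term in $x$: if $d_1\leq x\leq d_2$ with $d_1,d_2\in D$, then by monotonicity $a\ld xa\mt\ut\geq a\ld d_1a\mt\ut\geq d_1\mt d_1^{rr}\mt\ut\in D$, and convexity of $C$ finishes the job. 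So the gap is repairable, but as written the key inequality is wrong.

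A second, lesser issue is the reduction to the totally ordered case, which is waved through as ``a standard argument'': the congruence extension property does not pass from subdirect factors to subdirect products without work, and maximizing a convex normal subalgebra of $\m{A}$ avoiding a point does not obviously land you in a chain. The cleaner route --- which your construction already supports --- bypasses the reduction entirely. The convex closure $C$ of $D$ in $\m{A}$ is a convex subalgebra with $C\cap B=D$ for \emph{any} $\m{A}$, by monotonicity/antitonicity of the operations in each coordinate; and normality of $C$ reduces to the single universally quantified inequality $a\ld xa\mt\ut\geq x\mt x^{rr}\mt\ut$ together with its mirror image. Being an inequality between terms, it holds in every idempotent semilinear pointed residuated lattice as soon as it holds in every idempotent residuated chain, which is exactly what the case analysis delivers. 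The chain computation should thus enter only to verify an identity, not to reduce the congruence extension property itself.
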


We will make free use of basic properties of idempotent residuated lattices (chains) in performing computations, summarized in the following lemmas (for a detailed account, see, e.g., \cite[Section 3]{FusGal2023a}).

\begin{lemma}
Let ${\m A}$ be any idempotent residuated lattice. For any $x,y\in A$,
\begin{enumerate}[label = \textup{(\roman*)}]
\item $x\mt y \leq xy \leq x\jn y$;
\item if $xy \leq \ut$, then $xy = x\mt y$;
\item if $\ut \leq xy$, then $xy = x \jn y$.
\end{enumerate}
\end{lemma}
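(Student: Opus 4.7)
The plan is to derive each of the three claims from the idempotency law $xx\eq x$ together with monotonicity of multiplication in each argument, which is a standard consequence of residuation (from $a\leq b$ and residuation one has $b\leq c\ld cb$, hence $a\leq c\ld cb$, hence $ca\leq cb$; the right-hand case is symmetric).

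For part (i), I would first observe that $x\mt y \leq x$ and $x\mt y \leq y$, so monotonicity of multiplication gives $(x\mt y)(x\mt y) \leq xy$, and then idempotency yields $(x\mt y)(x\mt y) = x\mt y$, so $x\mt y \leq xy$. Symmetrically, from $x,y\leq x\jn y$ and monotonicity, $xy \leq (x\jn y)(x\jn y) = x\jn y$.

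For part (ii), suppose $xy\leq \ut$. The inequality $x\mt y \leq xy$ is immediate from (i). For the reverse direction, I would multiply $xy\leq \ut$ on the left by $x$ and use associativity together with idempotency of $x$ to obtain $xy = (xx)y = x(xy) \leq x\ut = x$. An entirely analogous argument, multiplying on the right by $y$ and using idempotency of $y$, yields $xy \leq y$, and hence $xy \leq x\mt y$. Part (iii) is then dual: assuming $\ut \leq xy$, the inequality $xy \leq x\jn y$ comes from (i), and multiplying $\ut \leq xy$ on the left by $x$ with associativity and idempotency gives $x = x\ut \leq x(xy) = (xx)y = xy$, with an analogous right-multiplication argument yielding $y \leq xy$.

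There is no genuine obstacle here: the only care needed is to multiply on the correct side in each case so that $xx$ or $yy$ collapses via idempotency, and to invoke monotonicity of multiplication in each argument. Both facts are recorded in the general references on residuated lattices cited just prior to the lemma, so the proof is essentially a two-line calculation for each part.
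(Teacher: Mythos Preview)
Your argument is correct in every part: monotonicity of multiplication together with idempotency gives (i) immediately, and the left- and right-multiplication tricks for (ii) and (iii) are exactly the right moves, with associativity collapsing $x(xy)$ and $(xy)y$ to $xy$ as needed. The paper does not actually prove this lemma---it is stated as a known basic fact with a reference to \cite[Section~3]{FusGal2023a}---so your proposal supplies precisely the routine verification the paper omits.
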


\begin{lemma} \label{l:idempotentops}\label{l:basic-properties}\label{l:galoiscon}
Let ${\m A}$ be any idempotent residuated chain. For any $x,y\in A$,
\begin{align*}
 x y = \begin{cases} 
      x &   \text{if } y\in  (x^r, x] \text{ or } y\in  [x, x^r] \\
      y &   \text{if } x\in  (y^\ell, y] \text{ or } x\in [y, y^\ell]
   \end{cases},\\
x\ld y = \begin{cases} 
      x^r \jn y &  \text{if } x\leq y \\
      x^r \mt y &  \text{if } y < x
   \end{cases}, \qquad 
y\rd x = \begin{cases} 
      x^\ell \jn y &  \text{if } x\leq y \\
      x^\ell \mt y &  \text{if } y < x
   \end{cases}.
\end{align*}
A subset of $A$ is therefore a subuniverse of $\m A$ if and only if it contains $\ut$ and is closed under the operations $x\mapsto x^\ell$ and $x\mapsto x^r$.
\end{lemma}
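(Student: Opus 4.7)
The plan is to deduce all three claims from the immediately preceding lemma (which gives $xy = x \mt y$ when $xy \leq \ut$ and $xy = x \jn y$ when $xy \geq \ut$) together with the residuation equivalences $xy \leq \ut \iff y \leq x^r \iff x \leq y^\ell$, which follow at once from the definitions $x^r = x\ld\ut$ and $y^\ell = \ut \rd y$.

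For the multiplication formula, I would split on the chain order of $x$ and $y$. Assuming $x \leq y$, the subcase $y \leq x^r$ yields $xy \leq \ut$ and hence $xy = x \mt y = x$, placing the pair in the first clause via $y \in [x, x^r]$; while the subcase $y > x^r$ yields $xy > \ut$ and hence $xy = x \jn y = y$, placing it in the second clause via the equivalent condition $x > y^\ell$, i.e., $x \in (y^\ell, y]$. The symmetric argument for $y \leq x$ completes the case analysis, and the four subcases together show that the two clauses partition all pairs $(x,y)$ with the advertised output.

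For the residual formulas I would verify both inequalities in the case $x \leq y$; the case $y < x$ is analogous with $x^r \mt y$ in place of $x^r \jn y$, and the formula for $y \rd x$ follows by replacing $x^r$ with $x^\ell$ throughout, using $xz \leq \ut \iff x \leq z^\ell$. Assume $x \leq y$. Distributivity of $\pd$ over $\jn$ (valid in any residuated lattice) gives $x\pd(x^r \jn y) = x\pd x^r \jn xy$, and applying the multiplication formula both to $x \pd x^r$ (which equals $x$ if $x \leq x^r$ and $x^r$ otherwise) and to $xy$ shows after routine sub-case checking that the right-hand side is at most $y$, so $x^r \jn y \leq x \ld y$. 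For the reverse, any $z > x^r \jn y$ satisfies $z > x^r$ and $z > y \geq x$, hence $xz > \ut$ and so $xz = x \jn z = z > y$, ruling out $xz \leq y$; thus $x \ld y \leq x^r \jn y$. The analogous argument for $y < x$ uses the observation that any $z$ with $xz \leq y$ must satisfy $xz = z$ (since $xz = x$ would give $x \leq y$), whence $z \leq y$; a second appeal to the multiplication formula forces $z \leq \ut$, and hence $z \leq x^r$ by residuation, yielding $z \leq x^r \mt y$ as required.

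The subuniverse characterization is then immediate: since $\m A$ is a chain, any subset $S \subseteq A$ is automatically closed under $\mt$ and $\jn$ (these operations merely return one of their arguments), so closure of $S$ under $\pd, \ld, \rd$ reduces via the formulas above to closure under the unary operations $x \mapsto x^r$ and $x \mapsto x^\ell$. Adjoining the requirement $\ut \in S$ then singles out exactly the subuniverses of $\m A$. The main obstacle throughout is the case bookkeeping for the residual formulas: one must compute auxiliary products such as $x \pd x^r$ via the multiplication formula and track whether $x$ and $y$ lie above or below $\ut$ to confirm that expressions like $\ut \jn y$ collapse to the required bound. These computations are routine but warrant some care.
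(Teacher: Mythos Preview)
Your argument is correct. Note, however, that the paper does not actually prove this lemma: it is stated as a summary of known facts with a pointer to \cite[Section~3]{FusGal2023a}, so there is no ``paper's proof'' to compare against. What you have written is a valid self-contained derivation from the preceding lemma and residuation.

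A couple of small remarks on presentation. In the case $y<x$ of the residual formula, your phrase ``a second appeal to the multiplication formula forces $z\le\ut$'' is doing real work and could be unpacked: from $xz=z$ and $z<x$ the multiplication formula places you in the branch $x\in[z,z^\ell]$, i.e., $xz\le\ut$, whence $z=xz\le\ut$ and so $z\le x^r$ by residuation. Also, in the subuniverse paragraph you might observe explicitly that the multiplication formula already gives $xy\in\{x,y\}$, so closure under $\cdot$ is automatic for any subset of a chain; only $\ld$ and $\rd$ impose the closure conditions on $x\mapsto x^r$ and $x\mapsto x^\ell$. These are cosmetic; the logic is sound.
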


\begin{lemma}\label{l:homomorphism}
Let $\m{A}$ and $\m{B}$ be any idempotent residuated chains. An injective map  $h\colon A\to B$ is an embedding if and only if it is order-preserving and satisfies $h(\ut)=\ut$, $h(x^\ell)=h(x)^\ell$, and $h(x^r)=h(x)^r$, for all $x\in A$.
\end{lemma}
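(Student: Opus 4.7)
The forward implication is immediate: any embedding must preserve the lattice order (since $x \leq y$ iff $x \mt y = x$), the constant $\ut$, and the term-defined unary operations $x^\ell = \ut\rd x$ and $x^r = x\ld \ut$. The content lies in the converse direction, and the strategy is to leverage Lemma~\ref{l:idempotentops}, which expresses the binary operations $\pd$, $\ld$, and $\rd$ on an idempotent residuated chain using only the order, the constant $\ut$, and the unary operations $(\cdot)^\ell$ and $(\cdot)^r$.

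So suppose $h\colon A \to B$ is injective, order-preserving, and commutes with $\ut$, $(\cdot)^\ell$, and $(\cdot)^r$. First I would observe that, since $h$ is injective and order-preserving into a chain, it is strictly order-preserving: $x < y$ in $A$ implies $h(x) < h(y)$ in $B$. In particular, $h$ reflects strict order, so any comparison $x \le y$ or $y < x$ holds in $A$ if and only if $h(x) \le h(y)$ or $h(y) < h(x)$ holds in $B$. Combining this with the fact that $B$ is totally ordered, $h$ preserves the lattice operations $\mt$ and $\jn$ automatically.

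Next, I would verify preservation of $\pd$, $\ld$, and $\rd$ by a direct case analysis using the formulas from Lemma~\ref{l:idempotentops}. For example, to show $h(xy) = h(x)h(y)$, I would split according to the clauses determining $xy$: if $y \in (x^r, x]$ or $y \in [x, x^r]$ in $A$, then applying $h$ and using that $h$ preserves order, $\ut$, and $(\cdot)^r$, we get $h(y) \in (h(x)^r, h(x)]$ or $h(y) \in [h(x), h(x)^r]$ in $B$, whence $h(x)h(y) = h(x) = h(xy)$; the other case is symmetric, using $(\cdot)^\ell$. For $\ld$, I would use the formula $x \ld y = x^r \jn y$ if $x \leq y$, else $x^r \mt y$, noting that both the branching condition and the right-hand side are preserved by $h$; the argument for $\rd$ is analogous.

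I expect no real obstacle: everything reduces to the observation that the derived operations from Lemma~\ref{l:idempotentops} are built entirely from the data that $h$ is hypothesized to preserve, together with the order. The only minor subtlety is ensuring that the \emph{conditional} branches in the formulas for $\pd$, $\ld$, $\rd$ transfer correctly under $h$, which is handled by the strict order-preservation observation above.
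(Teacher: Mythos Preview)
Your argument is correct and is the natural one: the paper states this lemma without proof, as it is a routine consequence of Lemma~\ref{l:idempotentops}. The only point worth making explicit is that the case split for $\pd$ in Lemma~\ref{l:idempotentops} is exhaustive (every product $xy$ in an idempotent residuated chain lies in $\{x,y\}$), so the case analysis really does handle all inputs; otherwise your verification goes through exactly as written.
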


If an idempotent residuated chains is also $^\star$-involutive, then its normal convex subalgebras and congruences have a special form.

\begin{prop}\label{p:projective}
Let $\m{A}$ be a $^\star$-involutive idempotent residuated chain, let $\m{C}$ be a convex normal subalgebra of $\m{A}$, and let $\Theta$ be the congruence corresponding to $\m{C}$. Then $[x]_\Theta = \{x\}$ for all $x\notin C$. In particular, every quotient of $\m{A}$ is isomorphic to a subalgebra of $\m{A}$.
\end{prop}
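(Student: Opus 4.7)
The plan is to prove the two conclusions separately, with $^\star$-involutivity playing a decisive role only in the second.

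For the first claim, I would suppose for contradiction that $y \in [x]_\Theta$ with $y \neq x$; since $\m A$ is a chain, we may assume $x < y$ (the case $y < x$ is entirely analogous). From the residuation formulas of Lemma~\ref{l:galoiscon}, $x \ld y = x^r \jn y$ and $y \ld x = y^r \mt x$, and the chain of inequalities $y^r \mt x \le x < y \le x^r \jn y$ gives $(x \ld y) \mt (y \ld x) = y^r \mt x$. Since $(x,y) \in \Theta$, the congruence--subalgebra correspondence places $y^r \mt x$ in $C$. If $x \le y^r$, this reads $x \in C$, contradicting $x \notin C$. Otherwise $y^r < x$, so $y^r \in C$; applying $\Theta$ to $y \cdot y^r = y \mt y^r = y^r$ and $y \cdot \ut = y$ yields $(y^r, y) \in \Theta$, whence $y \in C$. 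Then $y^r < x < y$ together with convexity of $C$ forces $x \in C$, a contradiction.

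For the second claim, I would set $B := (A \setc C) \cup \{\ut\}$ and aim to show that $B$ is a subuniverse of $\m A$; by Lemma~\ref{l:galoiscon} it suffices to check closure under $x \mapsto x^\ell$ and $x \mapsto x^r$. The key structural fact is that $C$, being a subalgebra, is closed under the term $x \mapsto x^\star$, so $^{\star\star}$ being the identity gives $x \in C \Longleftrightarrow x^\star \in C$. Suppose for contradiction that some $x \in A \setc C$ has $x^r \in C$ with $x^r \neq \ut$. Since $(x^r, \ut) \in \Theta$ and $x \cdot x^r = x \mt x^r$, the congruence property yields $(x \mt x^r, x) \in \Theta$; if $x > x^r$ this becomes $(x^r, x) \in \Theta$ and forces $x \in C$, so $x \le x^r$. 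Together with $x \notin C \ni x^r$ and convexity, this forces $x$ to lie strictly below $C$; in particular $x < \ut$, so both $x^\ell, x^r \ge \ut$. If $x^\ell \in C$, then $x^\star = x^\ell \mt x^r \in C$ and hence $x \in C$, a contradiction; so $x^\ell \notin C$. But $x^\ell \le x^r$ would give $\ut \le x^\ell \le x^r \in C$ and, by convexity, $x^\ell \in C$, a contradiction; so $x^\ell > x^r$, and then $x^\star = x^\ell \mt x^r = x^r \in C$ contradicts $x^\star \notin C$. A symmetric argument shows $x^\ell \in B$. With $B$ now a subuniverse, the restriction of the quotient map $\m A \to \m A/\Theta$ to $B$ is a homomorphism that is surjective (every $\Theta$-class has a representative in $B$) and injective (by the first part), giving $\m B \cong \m A/\Theta$.

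The principal obstacle is Part 2: the natural candidate $B = (A \setc C) \cup \{\ut\}$ may fail to be a subalgebra in a general idempotent residuated chain, precisely when $x$ lies strictly below $C$ yet $x^r$ lies strictly between $\ut$ and the top of $C$. The role of $^\star$-involutivity is to impose a symmetry of $C$ about $\ut$ (via $x \in C \Longleftrightarrow x^\star \in C$) that rules out this pathological configuration.
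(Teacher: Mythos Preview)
Your proof is correct, and your first part is actually sharper than the paper's. The paper argues Part~1 by reducing to the case $x^r\in C$ or $y^r\in C$ and then invoking two external results (\cite[Corollary~4.5 and Lemma~4.18]{FusGal2023b}), the second of which already uses $^\star$-involutivity. Your argument avoids both citations: from $y^r\in C$ you multiply the relation $(y^r,\ut)\in\Theta$ by $y$ to obtain $(y^r,y)\in\Theta$, place $y$ in $C$, and finish by convexity. This is self-contained and, as you note, does not use $^\star$-involutivity at all---so Part~1 in fact holds for arbitrary idempotent residuated chains, which is a genuine strengthening. For Part~2 the two approaches coincide in spirit (both exhibit $(A\setc C)\cup\{\ut\}$ as a subuniverse and identify it with the quotient), but the paper simply asserts that the map $h\colon\m{A}/\Theta\to\m{A}$ is an embedding, whereas you supply the verification that $B$ is closed under $^\ell$ and $^r$, correctly locating the single use of $^\star$-involutivity in the equivalence $x\in C\Leftrightarrow x^\star\in C$. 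Your closing remark about the obstruction is also on point: in $\Com{1}{0}$ with $C=\{b_0,\ut,a_0\}$ one has $b_1\notin C$ but $b_1^\star=a_0\in C$, so $B$ fails to be a subalgebra and indeed the quotient $\Go_1$ does not embed in $\Com{1}{0}$.
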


\begin{proof}
Suppose toward a contradiction that $\tuple{x,y}\in\Theta$ with $x,y\not\in C$ and $x\neq y$. Assume further, without loss of generality, that $x<y$. From $(x,y)\in\Theta$, we have $x\ld y\mt y\ld x\in C$, and, by direct computation, $x\ld y \mt y\ld x = (x^r\jn y)\mt (y^r\mt x)$. Since $\m{A}$ is totally ordered, either $x^r\jn y\in C$ (in which case $y<x^r\in C$) or $y^r\mt x\in C$ (in which case $x>y^r\in C$). Hence either $x^r\in C$ or $y^r\in C$. Suppose that $x^r\in C$. Then also $x^{r\ell}\in C$. By \cite[Corollary~4.5]{FusGal2023b}, either $x^{r\ell}=x$ or $x^\star=x^r$. But $x=x^{r\ell}\in C$ contradicts $x\notin C$. On the other hand, $x^\star=x^r$ implies $x^\star\in C$. In this case, \cite[Lemma~4.18]{FusGal2023b} implies that $x\in C$ since $\m{A}$ is $^\star$-involutive, again a contradiction. The assumption that $y^r\in C$ similarly leads to a contradiction.

Clearly, the quotient $\m{A}/\Theta$ is obtained from ${\m A}$ by collapsing the elements in $\m{C}$ and leaving the remaining elements uncollapsed. It is then easy to see that the map $h\colon \m{A}/\Theta\to \m{A}$ defined by $h([x]_\Theta)=x$ if $x\not\in C$ and $h([x]_\Theta)=\ut$ if $x\in C$, is an embedding, so $\m{A}/\Theta$ is isomorphic to a subalgebra of $\m{A}$.
\end{proof}

We now introduce an operator for combining a family of residuated chains that is especially well behaved for $^\star$-involutive idempotent residuated chains, and will play a central role in our investigations below. 

Let us say first that a residuated chain ${\m A}$ is \emph{admissible} if $x\ld \ut, \ut\rd x \notin \{ \ut \}$ for each $x\in A\setc\{\ut\}$. Given a totally ordered set $\tuple{I,\leq}$ whose greatest element (if any) is denoted by $\top$, we say that an indexed family $({\m A}_i)_{i\in I}$ of residuated chains is \emph{admissible} if ${\m A}_i$ is admissible for all $i\in I\setc\{\top\}$.

Let $\tuple{I,\leq}$ be a non-empty totally ordered set and ${\m A}_i = (A_i,\mt_i,\jn_i,\pd_i,\ld_i,\rd_i,\ut)$ an admissible residuated chain for each $i\in I$, assuming for simplicity of notation that $A_i\cap A_j = \{\ut\}$ for $i\neq j$. We define an algebraic structure $\nsum_{i\in I} {\m A}_i$ on the union $A:=\bigcup_{i\in I} A_i$ as follows. First, we let $\leq$ be the smallest partial order on $A$ satisfying
\begin{enumerate}
\item $\leq$ extends $\leq_i$ for each $i\in I$;
\item if $i<j$, $x\in A_i$, $y\in A_j$, and $x<_i \ut$, then $x\leq y$;
\item if $i<j$, $x\in A_i$, $y\in A_j$, and $\ut<_i x$, then $y\leq x$.
\end{enumerate}
It is easy to see that $\leq$ is a total order with lattice operations $\mt$ and $\jn$. Next, for $\ast\in\{\pd,\ld,\rd\}$, we let $x \ast y := x \ast_i y$ if $x,y\in A_i$, and for $x\in A_i$, $y\in A_j$ with $i<j$, let $x\ast y := x\ast_i \ut$ and $y\ast x := \ut\ast_i x$. The resulting algebraic structure $\tuple{A,\mt,\jn,\pd,\ld,\rd,\ut}$ is denoted by $\Nsum_{\tuple{I,\leq}} {\m A}_i$, and called a \emph{nested sum} of $({\m A}_i)_{i\in I}$. Note that we can always assume that the chain $\tuple{I,\leq}$ has a top element, since we can add $\top$ to $I$ and set ${\m A}_\top$ to be a trivial algebra. Moreover, we write $\Nsum_{i=1}^k {\m A}_i$ and ${\m A}_1 \nsum {\m A}_2$ for, respectively, $I = \{1, \dots, k \}$ and $I = \{1, 2\}$ with the standard total order. We also stipulate that $\Nsum_{i=1}^k {\m A}_i$ is a trivial algebra for $k<1$.

The following structural description is fundamental to the development of ideas in subsequent sections. Parts (i) and (ii) follow from \cite[Lemma~4.18]{FusGal2023b} and part (iii) is \cite[Lemma~4.22]{FusGal2023b}.

\begin{lemma}\label{l:starinvnested}\label{l:1-gen}\
\begin{enumerate}[label = \textup{(\roman*)}]
\item Every $^\star$-involutive idempotent residuated chain is admissible.
\item The class of $^\star$-involutive idempotent residuated chains is closed under nested sums.
\item Every $^\star$-involutive idempotent residuated chain is the nested sum of its 1-generated subalgebras.
\end{enumerate}
\end{lemma}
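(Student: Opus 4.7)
The plan is to prove each of the three parts in turn, exploiting the interaction between $^\star$-involutivity and the idempotent-chain formulas of Lemma~\ref{l:idempotentops}.

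For part (i), admissibility requires $x^r, x^\ell \neq \ut$ for every $x \neq \ut$. My first observation is that in any residuated chain, $x < \ut$ forces $x^r \geq \ut$ (since $x \cdot \ut = x \leq \ut$ gives $\ut \leq x \ld \ut$), while $x > \ut$ forces $x^r < \ut$ (since $x \cdot \ut = x \not\leq \ut$). So admissibility is automatic whenever $x > \ut$. For $x < \ut$, I would argue by contradiction: if $x^r = \ut$, then $x^\star = x^\ell \mt x^r = x^\ell \mt \ut = \ut$, using $x^\ell \geq \ut$ from the symmetric residuation argument, and then $^\star$-involutivity gives $x = x^{\star\star} = \ut^{\star\star} = \ut$, a contradiction. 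The case of $x^\ell$ is symmetric.

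For part (ii), I would verify that in a nested sum $\Nsum_{i \in I} \m{A}_i$ of $^\star$-involutive idempotent chains, every operation involving arguments from a single component $\m{A}_i$ agrees with the operation computed in $\m{A}_i$. In particular, for $x \in A_i \setminus \{\ut\}$ we have $x^r = x \ld \ut = x \ld_i \ut \in A_i$ and similarly $x^\ell \in A_i$, so $x^\star \in A_i$ and $x^{\star\star} = x$ by the $^\star$-involutivity of $\m{A}_i$. Idempotency is inherited in the same way, and the admissibility needed to form the nested sum is guaranteed by part~(i).

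Part (iii) is the main obstacle. Given a $^\star$-involutive idempotent residuated chain $\m{A}$, the task is to organize its 1-generated subalgebras into a totally ordered family whose nested sum reconstructs $\m{A}$. My plan is: (a) use Lemma~\ref{l:idempotentops} — which identifies subuniverses as subsets containing $\ut$ and closed under $x \mapsto x^\ell, x^r$ — to show that distinct 1-generated subalgebras $\langle a \rangle$ and $\langle b \rangle$ either coincide or meet only in $\{\ut\}$, so that equality of 1-generated subalgebras partitions $A \setminus \{\ut\}$; (b) induce a total order on equivalence classes by declaring $\langle a \rangle < \langle b \rangle$ when the non-$\ut$ part of $\langle a \rangle$ lies strictly between the elements of $\langle b \rangle$ that flank $\ut$, and verify that this comparison is total using the chain order together with $^\star$-involutivity to rule out interleaving; and (c) check that for $x \in \langle a \rangle$ and $y \in \langle b \rangle$ with $\langle a \rangle < \langle b \rangle$, the products $xy, yx$ and residuals $x \ld y, y \ld x$, etc.\ coincide with the nested-sum prescription, by case analysis on the side of $\ut$ each element lies on and direct application of Lemma~\ref{l:idempotentops}.

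The hard step is (c): one must show that crossover operations genuinely collapse to the component-wise rule $x \ast y = x \ast_i \ut$ whenever $x$ and $y$ live in distinct 1-generated subalgebras ordered as in the nested sum. I expect this to reduce to a careful bookkeeping argument in which $^\star$-involutivity controls the comparison of $y$ with $x^r$ or $x^\ell$, so that the conditional formulas of Lemma~\ref{l:idempotentops} all evaluate in the expected way.
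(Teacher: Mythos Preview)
The paper does not prove this lemma at all: it simply cites \cite[Lemma~4.18]{FusGal2023b} for (i)--(ii) and \cite[Lemma~4.22]{FusGal2023b} for (iii). Your proposal therefore goes well beyond the paper's treatment, and your arguments for (i) and (ii) are correct and self-contained; they amount to a clean reconstruction of what the cited lemma presumably contains.

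For part (iii) your overall architecture is right, but there is a gap in step~(a) that you may be underestimating. You claim that Lemma~\ref{l:idempotentops} alone shows distinct $1$-generated subalgebras meet only in $\{\ut\}$. What you actually need is that every $1$-generated subalgebra is generated by \emph{any} of its non-identity elements, i.e., that $a\in\langle a^{\ell}\rangle$ and $a\in\langle a^{r}\rangle$ whenever these are $\neq\ut$. This is not a formal consequence of the subuniverse description in Lemma~\ref{l:idempotentops}; it needs a dichotomy of the form ``either $x^{r\ell}=x$ or $x^{\star}=x^{r}$'' (and its $\ell$-dual), which is exactly \cite[Corollary~4.5]{FusGal2023b} as invoked elsewhere in the paper (see the proof of Proposition~\ref{p:projective}). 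With that dichotomy in hand, your inductive reduction ``$c\in\langle a\rangle\Rightarrow a\in\langle c\rangle$'' goes through, but without it step~(a) is unjustified.

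Step~(b) likewise needs more than the chain order: ruling out interleaving of two disjoint $1$-generated subalgebras on the same side of $\ut$ is not immediate from the product formulas (e.g., for $\ut<b_1<c<b_2$ one computes $b_1c=c$ and $b_2c=b_2$, which is consistent, not contradictory). The non-interleaving ultimately comes from the same $^\ell/^r$ reversibility that drives step~(a). Once (a) and (b) are secured in this way, your step~(c) does reduce to bookkeeping with Lemma~\ref{l:idempotentops}, as you anticipate. (Minor point: your ordering convention in (b) is the reverse of the nested-sum index order---smaller index is ``outside'', not ``inside''---but that is cosmetic.)
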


The following result shows that embeddings between components of nested sums lift appropriately to embeddings between the nested sums themselves.

\begin{prop}\label{p:sum-map}
Let ${\m A} = \Nsum_{\tuple{I,\leq}} {\m A}_i$ and ${\m B} = \Nsum_{\tuple{J,\leq}} {\m B}_j$ be nested sums of  idempotent residuated chains and suppose that $f\colon I \to J$ is an order-embedding with $f(\top) = \top$ and $g_i \colon A_i \to B_{f(i)}$ is an embedding for each $i\in I$. Then the map $g\colon A \to B$, defined by $g(a) = g_i(a)$ for $a\in A_i$ is an embedding. In particular, the inclusion map is an embedding of ${\m B}_j$ into $\m B$ for each $j\in J$.
\end{prop}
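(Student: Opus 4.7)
The plan is to verify the characterization of embeddings between idempotent residuated chains from Lemma~\ref{l:homomorphism}: namely, that $g$ is injective, order-preserving, fixes $\ut$, and commutes with $(-)^\ell$ and $(-)^r$. Since nested sums of idempotent residuated chains are idempotent residuated chains, this suffices. Well-definedness, together with $g(\ut)=\ut$, is immediate from the assumptions $A_i \cap A_j = \{\ut\}$ for $i \neq j$ and $g_i(\ut)=\ut$ for every $i$. Injectivity follows by cases: if $g(a)=g(b)$ with $a \in A_i$, $b \in A_j$ and $i=j$, then $a=b$ by injectivity of $g_i$; if $i \neq j$, then $f(i) \neq f(j)$, so the common value $g(a)=g(b)$ lies in $B_{f(i)} \cap B_{f(j)} = \{\ut\}$, forcing $a=b=\ut$ by injectivity of $g_i$ and $g_j$.

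For order preservation, assume $a \leq b$ with $a \in A_i$ and $b \in A_j$. If $i=j$, then $a \leq_i b$ and order preservation of $g_i$, combined with the fact that the order on $\m{B}$ extends $\leq_{f(i)}$, yields $g(a) \leq g(b)$. If $a,b \neq \ut$ and $i<j$, then (since $\leq$ on $\m{A}$ is total and the third clause would otherwise force $b \leq a$) one must have $a <_i \ut$; hence $g_i(a) <_{f(i)} \ut$ using that $g_i$ is an injective order-preserving map fixing $\ut$, and since $f(i) < f(j)$ as $f$ is an order-embedding, the second clause in the definition of the nested sum order on $\m{B}$ delivers $g(a) \leq g(b)$. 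The case $j<i$ is symmetric via the third clause, while cases where $a$ or $b$ equals $\ut$ reduce to same-component comparisons.

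For the residual operations, the key observation is that for any $a \in A_i \setc \{\ut\}$ the value $\ut \rd a$ computed in the nested sum $\m{A}$ agrees with $\ut \rd_i a$ computed in $\m{A}_i$, because $\ut$ and $a$ share the component $A_i$; the same remark holds in $\m{B}$ for $g(a) \in B_{f(i)}$. Combining these with the fact that $g_i$ is itself an embedding of components yields
\[
g(a^\ell) \;=\; g_i(\ut \rd_i a) \;=\; \ut \rd_{f(i)} g_i(a) \;=\; g(a)^\ell,
\]
and symmetrically for $(-)^r$; the cases $a=\ut$ are trivial. Lemma~\ref{l:homomorphism} then delivers that $g$ is an embedding. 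The ``in particular'' clause follows by the same line of reasoning applied to the inclusion $\m{B}_j \hookrightarrow \m{B}$: it is injective, fixes $\ut$, is order-preserving because the nested sum order extends the component order, and commutes with the residuals by the same same-component observation.

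The main technical obstacle is the order-preservation step: extracting the strict inequality $a <_i \ut$ from the nested-sum ordering on $\m{A}$, propagating it through $g_i$, and then invoking the correct defining clause for $\m{B}$ requires careful bookkeeping, but relies on nothing more than totality of $\leq$, injectivity of $g_i$, and the order-embedding property of $f$. All remaining verifications are componentwise.
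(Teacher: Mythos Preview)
Your proof is correct and follows essentially the same approach as the paper: both invoke Lemma~\ref{l:homomorphism} and reduce the problem to checking injectivity, order preservation, $g(\ut)=\ut$, and preservation of $(-)^\ell$, $(-)^r$, with the latter handled componentwise via the defining property that the nested-sum operations restrict to the component operations. The only cosmetic difference is the case split for order preservation: the paper partitions according to the positions of $x$ and $y$ relative to $\ut$ (both below, straddling, both above), whereas you partition according to whether $i<j$ or $j<i$ and then extract the needed inequality relative to $\ut$; both analyses amount to the same verification.
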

\begin{proof}
The map $g$ is clearly well-defined, injective, and satisfies $g(\ut) = \ut$. Hence,  by Lemma~\ref{l:homomorphism}, it suffices to show that $g$ is order-preserving and preserves the operations $x\mapsto x^\ell$ and $x^r$. That $g$ preserves $^\ell$ and $^r$ follows from the fact that $g(x^\ast) = g_i(x^\ast) = g_i(x)^\ast = g(x)^\ast$, for any $i \in I$, $x\in A_i$, and $\ast\in\{\ell,r\}$, since the operations of $\m A$ and $\m B$ extend the operations on ${\m A}_i$ and ${\m B}_{f(i)}$, respectively,  and $g_i$ is a homomorphism. Finally,  suppose that $x,y \in A$  with  $x\leq y$ and $x\in A_i$, $y\in A_j$ for $i,j\in I$. There are several cases. If $i=j$, then $g(x) = g_i(x) \leq g_i(y) = g(y)$, since the order on $\m B$ extends the order on ${\m B}_{f(i)}$ and $g_i$ is order-preserving. Otherwise, $i\neq j$. If $x\leq \ut \leq y$, then clearly $g(x) = g_i(x) \leq \ut \leq g_j(y) = g(y)$. If $x\leq y \leq \ut$, then $i< j$, yielding $f(i) < f(j)$.  Moreover, $g_i(x)\leq \ut$, and $g_j(y)\leq \ut$. Hence, by the definition of the nested sum, $g(x) = g_i(x) \leq g_j(y)$. Similarly, if $\ut \leq x \leq y$, then $g(x) \leq g(y)$.
\end{proof}

In Section~\ref{s:with_exchange}, we will consider finite commutative idempotent residuated chains in detail. We fix some notation in order to ease our discussion there:

\begin{itemize}
\item For $n\in \N$, we denote by $\Go_n$ the $(n+1)$-element relative Stone algebra with universe $\go_n = \{c_n < \dots < c_1 < \ut\}$, i.e., the $(n+1)$-element commutative idempotent residuated chain with $x\pd y=x\mt y$.
\item For $m,n \in \N$, we define a commutative idempotent residuated chain $\Com{m}{n}=\tuple{\com{m}{n},\mt, \jn, \pd, \ld, \rd, \ut}$ as follows. The universe and underlying order of this algebra is given by $\com{m}{n} = \{b_m< \dots < b_0 < \ut < a_n < \dots < a_0\}$. For multiplication, we let $a_ia_j = a_{\min\{i,j\}} =a_i\jn a_j$, $b_kb_l = b_{\max\{k,l\}} = b_k\mt b_l$ for $0\leq i,j \leq n$, $0\leq k,l \leq m$, and $a_i b_k = b_k a_i = b_k$ for any $i,k$, where $\ut$ is the unit. The residual is uniquely determined by the order and the definition of $\cdot$ by setting $x\to z :=\max \{y\in\com{m}{n}\mid xy\leq z\}$.
\end{itemize}
Note that, up to isomorphism, $\Com{0}{0}$ is the three-element odd Sugihara monoid and $\Nsum_{i=1}^k \Com{0}{0}$ is the totally ordered $(2k + 1)$-element odd Sugihara monoid. In what follows we will always assume that in the nested sum $\Nsum_{i=1}^k \Com{m_i}{n_i}$ we have $\com{m_i}{n_i} = \{b_{m_i}^i < \dots < b_0^i < \ut < a_{n_i}^i < \dots a_{0}^i\}$.

\begin{lemma}[{\cite[Proposition 3.4]{GilJipMet2020}}]\label{l:skeleton}
Let $\m A$ be a finite commutative idempotent residuated chain.  Then there exists $k\in \N$ such that $\m A$ contains an isomorphic copy $\m B$ of $\Nsum_{i=1}^k \Com{0}{0}$ with $B = \{ a \in A \mid \invv{a}= a \}$. Moreover, $\m A$ is partitioned by the family of intervals $\{ B_b \}_{b \in B}$, where $B_b = \{ a \in A \mid \invv{a} = b \}$.
\end{lemma}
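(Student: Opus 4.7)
The plan is to exhibit $B := \{a \in A : \invv{a} = a\}$ as a subuniverse of $\m A$ whose corresponding subalgebra is $^\star$-involutive, and then apply the nested-sum decomposition of Lemma~\ref{l:1-gen}(iii). First I would verify that $B$ is a subuniverse. Since $\m A$ is commutative, $x^\ell = x^r = \inv{x}$, so by Lemma~\ref{l:basic-properties} it is enough to check that $\ut \in B$ and that $B$ is closed under $x \mapsto \inv{x}$. The former holds because $\inv{\ut} = \ut$. For the latter, the Galois-connection identity $x^{\star\star\star} = \inv{x}$ (valid in any commutative residuated lattice, since $x \leq \invv{x}$ and $\inv{(\cdot)}$ is order-reversing) yields $(\inv{x})^{\star\star} = x^{\star\star\star} = \inv{x}$, so $\inv{x} \in B$ for every $x \in A$. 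The subalgebra $\m B$ so obtained is thus a finite $^\star$-involutive commutative idempotent residuated chain.

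Next I would analyse its 1-generated subalgebras. Given $a \in B \setc \{\ut\}$, closure under $\inv{(\cdot)}$ together with $\invv{a} = a$ shows that the subuniverse generated by $a$ is $\{a, \ut, \inv{a}\}$; here $\inv{a} \neq \ut$ by admissibility of $\m B$ (Lemma~\ref{l:starinvnested}(i)), and $a \neq \inv{a}$ since the two lie on opposite sides of $\ut$. A short computation with Lemma~\ref{l:idempotentops} (if $a < \ut < \inv{a}$, then $aa = a$, $\inv{a}\inv{a} = \inv{a}$, and $a \cdot \inv{a} = a$) shows this 3-element algebra is isomorphic to $\Com{0}{0}$. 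Applying Lemma~\ref{l:1-gen}(iii) to $\m B$ and discarding the (at most one) trivial component, I obtain $\m B \cong \Nsum_{i=1}^k \Com{0}{0}$ for some $k \in \N$, with $k$ finite because $\m A$ is finite.

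Finally I would establish the partition claim. From $x^{\star\star\star} = \inv{x}$ it follows that $\invv{(\invv{a})} = a^{\star\star\star\star} = \invv{a}$, so $\invv{a} \in B$ for every $a \in A$, and the fibres $B_b$ of the map $a \mapsto \invv{a}$ indexed by $b \in B$ partition $A$. To see that each $B_b$ is an interval, note that residuation is order-reversing in its first argument, so $x \mapsto \inv{x}$ is order-reversing and $x \mapsto \invv{x}$ is therefore order-preserving; any $x \leq y \leq z$ with $\invv{x} = \invv{z} = b$ then sandwiches $\invv{y}$ between $b$ and $b$, forcing $y \in B_b$.

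The only mildly delicate step is the identification of the nontrivial 1-generated subalgebras of $\m B$ with $\Com{0}{0}$, which rests on admissibility (to rule out $\inv{a} = \ut$) and the explicit multiplication formula of Lemma~\ref{l:idempotentops}. Everything else is a direct consequence of the Galois behaviour of $^\star$ and the structure theory already developed.
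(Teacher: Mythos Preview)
The paper does not prove this lemma; it is quoted verbatim from \cite[Proposition~3.4]{GilJipMet2020} and used as a black box. Your argument is therefore not competing with anything in the present paper, and it is correct. The only point I would tighten is the identification of the $1$-generated subalgebras with $\Com{0}{0}$: rather than invoking the case split in Lemma~\ref{l:idempotentops}, it is cleaner to observe directly that $a\cdot\inv{a}\le\ut$ by the defining property of the residual, whence $a\cdot\inv{a}=a\mt\inv{a}$ by Lemma~3.4(ii); this gives the whole multiplication table at once. Everything else---closure of $B$ under $^\star$ via $x^{\star\star\star}=\inv{x}$, the application of Lemma~\ref{l:1-gen}(iii), and the interval argument for the fibres of $a\mapsto\invv{a}$---is exactly right.
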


We call the algebra $\m B$  in the previous lemma the \emph{Sugihara skeleton} of $\m A$.

\begin{lemma}\label{l:comm-decomp}
Let $\m A$ be a finite commutative idempotent residuated chain. Then $\m A$ is isomorphic to a nested sum $(\Nsum_{i=1}^k \Com{m_i}{n_i}) \nsum \Go_p$ with $k,p\in \N$ and $m_i,n_i \in \N$ for $1\leq i \leq k$. 
\end{lemma}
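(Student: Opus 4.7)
The plan is to use Lemma~\ref{l:skeleton} to split $\m A$ into blocks defined by its Sugihara skeleton, recognize each block cluster as a copy of $\Com{m_i}{n_i}$ or $\Go_p$, and glue the pieces back together as the claimed nested sum.

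First, I would apply Lemma~\ref{l:skeleton} to obtain $k \in \N$ and the skeleton $\m B \cong \Nsum_{i=1}^k \Com{0}{0}$ sitting inside $\m A$, with skeleton elements $b_0^1 < \cdots < b_0^k < \ut < a_0^k < \cdots < a_0^1$, together with the partition $\{B_b\}_{b\in B}$. Since $^\star$ is antitone, $^{\star\star}$ is monotone, so each block $B_b$ is a convex interval. Moreover, since $x > \ut$ forces $x^\star < \ut$ and hence $x^{\star\star} > \ut$ (with the dual statement below $\ut$), each positive block $B_{a_0^i}$ lies above $\ut$, each negative block $B_{b_0^i}$ lies below $\ut$, and $B_\ut$ contains $\ut$ along with possibly further elements strictly below $\ut$.

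Next, set $A_i := B_{b_0^i} \cup \{\ut\} \cup B_{a_0^i}$ for each $i \geq 1$ and $A_0 := B_\ut$. The crucial computation is that $x^\star = b_0^i$ for every $x \in B_{a_0^i}$ and, dually, $y^\star = a_0^i$ for every $y \in B_{b_0^i}$. Indeed, antitonicity of $^\star$ combined with $x \leq a_0^i$ gives $x^\star \geq (a_0^i)^\star = b_0^i$, where the equality $(a_0^i)^\star = b_0^i$ is a direct computation inside the skeleton $\m B$; while $(x^\star)^{\star\star} = (a_0^i)^\star = b_0^i$ places $x^\star$ in the block $B_{b_0^i}$, yielding $x^\star \leq b_0^i$. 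With this identification, the explicit formulas of Lemma~\ref{l:basic-properties} give closure of $A_i$ under all operations and match the induced structure with $\Com{m_i}{n_i}$ for $m_i := |B_{b_0^i}| - 1$ and $n_i := |B_{a_0^i}| - 1$. For the remaining piece $A_0$, every element is at most $\ut$, so Lemma~\ref{l:basic-properties}(ii) forces $xy = x \mt y$ throughout and a short calculation yields the standard relative Stone residual; hence $\m A_0 \cong \Go_p$ with $p := |B_\ut| - 1$.

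Finally, I would verify the decomposition $\m A \cong \m A_1 \nsum \cdots \nsum \m A_k \nsum \m A_0$ by checking that the universes cover $A$ with pairwise intersections $\{\ut\}$, that the global order satisfies the nested sum conditions (by the convex placement of the blocks, the negative part $B_{b_0^i}$ of $\m A_i$ lies below everything in later components and the positive part $B_{a_0^i}$ lies above), and that cross-component products and residuals match. All three checks follow from the same $x^\star = b_0^i$ identification via the formulas of Lemma~\ref{l:basic-properties}. Substituting the isomorphism types then yields $\m A \cong (\Nsum_{i=1}^k \Com{m_i}{n_i}) \nsum \Go_p$. The main obstacle is the star-identification step: once it is established that $x^\star$ coincides with the top of the dual block for every non-skeleton element $x$, both the closure of each $\m A_i$ as a subalgebra and the assembly as a nested sum are routine.
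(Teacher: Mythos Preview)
Your proof is correct and follows essentially the same approach as the paper: obtain the Sugihara skeleton via Lemma~\ref{l:skeleton}, set $A_i = B_{b_0^i}\cup\{\ut\}\cup B_{a_0^i}$ and $A_0 = B_\ut$, identify these with $\Com{m_i}{n_i}$ and $\Go_p$, and assemble the nested sum; in fact you spell out the key computation $x^\star = b_0^i$ for $x\in B_{a_0^i}$ more explicitly than the paper does. One small point to make explicit: both halves of that computation silently rely on the standard inequality $x \le x^{\star\star}$ (from $x\cdot x^\star \le \ut$ and residuation), which is what guarantees that $a_0^i$ and $b_0^i$ are the \emph{top} elements of their respective blocks --- without it, your step ``$x\le a_0^i$'' and the conclusion ``$x^\star\in B_{b_0^i}$ yields $x^\star\le b_0^i$'' are unjustified.
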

\begin{proof}
Let  ${\m B} = \Nsum_{i=1}^k \Com{0}{0}$ be the Sugihara skeleton of $\m{A}$, with universe $B=\{b_0^1 < \dots < b_0^k < \ut < a_0^k<  \dots <  a_0^1 \}$. Define $m_i = \lvert B_{b_0^i} \rvert -1 $ and $n_i = \lvert B_{a_0^i} \rvert -1$ for $1 \leq i \leq k$ and $p = \lvert B_{\ut} \rvert -1$. Also, for $1 \leq i\leq k$, let  $A_i = B_{b_0^i} \cup \{ \ut \} \cup B_{a_0^i}$. Then $A_i = \{ b_{m_i}^i < \dots < b_{0}^i < \ut < a_{n_i}^i < \dots < a_0^i \}$.  It is not hard to see that the map $f_i \colon \com{m_i}{n_i} \to A_i$ defined by $f(\ut) = \ut$,  $f_i(a_j) = a_j^{i}$, $f_i(b_j) = b_j^i$ is an embedding. Moreover, for $B_{\ut} = \{ x_p < \dots < x_1 < \ut\}$ the map $f_0 \colon \go_p \to B_{\ut}$, defined by $f_0(\ut) = \ut$, and $f_(c_j) = x_j$ for $1\leq j \leq p$ is an embedding. Defining $f \colon (\Nsum_{i=1}^k \Com{m_i}{n_i}) \nsum \Go_p \to {\m A}$, by 
\begin{align*}
f(x) = \begin{cases}
f_i(x) & \text{if } x \in \com{m_i}{n_i,} \\
f_0(x) & \text{if } x \in \go_p,
\end{cases}
\end{align*}
yields the desired isomorphism. Moreover, the embedding $f$ is surjective, since $\m A$ is partitioned by $\{ B_b \}_{b\in B}$.
\end{proof}

\begin{lemma}
Suppose that $(\Nsum_{i=1}^k \Com{m_i}{n_i}) \nsum \Go_p \cong (\Nsum_{j=1}^l \Com{r_j}{s_j}) \nsum \Go_q$. Then $k=l$, $p=q$ and $r_i = m_i$, $s_i = n_i$ for all $1\leq i\leq k$.
\end{lemma}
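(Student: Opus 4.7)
The plan is to recover the parameters $k, p, (m_i), (n_i)$ as isomorphism invariants of $\m A := (\Nsum_{i=1}^k \Com{m_i}{n_i}) \nsum \Go_p$ by appealing to Lemma~\ref{l:skeleton}. Since the set $B = \{a \in A : \invv{a} = a\}$ and each block $B_b = \{a \in A : \invv{a} = b\}$ is definable purely from the algebra, both are preserved under any isomorphism. My strategy is to identify the Sugihara skeleton and the sizes of its partition blocks explicitly in terms of $k, p, (m_i), (n_i)$, and then read off the required equalities.

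The main step is to compute $\inv{x}$ on $\m A$ using the three-case multiplication rule from the definition of the nested sum. I would argue that for $b_j^i \in \Com{m_i}{n_i}$ the inequality $b_j^i \cdot y \leq \ut$ holds precisely when $y$ lies in $\Com{m_i}{n_i}$, in some summand $\Com{m_{i'}}{n_{i'}}$ with $i' > i$, in $\Go_p$, or is a $\leq \ut$ element of some summand $\Com{m_{i'}}{n_{i'}}$ with $i' < i$; the largest admissible $y$ is then $a_0^i$, so $\inv{(b_j^i)} = a_0^i$. A symmetric analysis gives $\inv{(a_j^i)} = b_0^i$, and for $y \in \Go_p \cup \{\ut\}$ a direct check yields $\inv{y} = \ut$. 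Applying $\inv{\phantom{x}}$ again produces
\begin{equation*}
\invv{(b_j^i)} = b_0^i, \quad \invv{(a_j^i)} = a_0^i, \quad \invv{y} = \ut \; \text{ whenever } y \in \Go_p \cup \{\ut\}.
\end{equation*}

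These formulas identify the Sugihara skeleton of $\m A$ as the $(2k+1)$-element chain $\{b_0^1 < \cdots < b_0^k < \ut < a_0^k < \cdots < a_0^1\}$, with block sizes $|B_{b_0^i}| = m_i + 1$, $|B_{a_0^i}| = n_i + 1$, and $|B_\ut| = p + 1$. Given an isomorphism from $\m A$ to $(\Nsum_{j=1}^l \Com{r_j}{s_j}) \nsum \Go_q$, the restriction to Sugihara skeletons is an order-isomorphism of a $(2k+1)$-chain onto a $(2l+1)$-chain, forcing $k = l$ and matching the $i$th element on the left with the $i$th element on the right. Since blocks map to blocks, comparing cardinalities then yields $m_i = r_i$, $n_i = s_i$, and $p = q$.

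The principal obstacle is the case analysis required in the computation of $\inv{x}$ inside the nested sum: for each candidate $y$ one must determine which of the three cases in the definition of $\Nsum$ applies and whether $\ut$ separates the two inputs in the relevant summand. Once that routine but fiddly verification is completed, the remainder is straightforward bookkeeping from the isomorphism invariance of the Sugihara skeleton and its partition.
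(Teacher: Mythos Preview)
Your proposal is correct and follows essentially the same strategy as the paper: both arguments recover the parameters as isomorphism invariants by identifying the Sugihara skeleton $\{a\in A: a^{\star\star}=a\}$ with the chain $\{b_0^1<\dots<b_0^k<\ut<a_0^k<\dots<a_0^1\}$ and reading off $k$, $m_i$, $n_i$, $p$ from its size and the block cardinalities $|B_{b_0^i}|$, $|B_{a_0^i}|$, $|B_\ut|$. The paper's proof is terser because it silently relies on the computations already carried out in the proof of Lemma~\ref{l:comm-decomp}, whereas you redo the computation of $x^\star$ in the nested sum explicitly; but the underlying idea is the same.
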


\begin{proof}
Let ${\m A} \cong (\Nsum_{i=1}^k \Com{m_i}{n_i}) \nsum \Go_p \cong (\Nsum_{j=1}^l \Com{r_j}{s_j}) \nsum \Go_q$. Let $\m B$ be the Sugihara skeleton of $\m A$.  Note first that   $2k +1 =   \lvert \{ a \in A \mid \invv{a}  =a \} \rvert  = 2l + 1$. Hence $k = l$. Similarly we have for $B = \{b_0^1 < \dots < b_0^k < \ut < a_0^k <  \dots <  a_0^1 \}$ that $m_i = \lvert B_{b_0^i} \rvert -1 = r_i$, $n_i = \lvert B_{a_0^i} \rvert -1 = s_i$, and $p = \lvert B_{\ut} \rvert -1 = q$.
\end{proof}

We have therefore established the following structural description of finite commutative idempotent residuated chains.

\begin{prop}
Let $\m A$ be any finite commutative idempotent residuated chain. Then $\m A$ is isomorphic to a unique nested sum $(\Nsum_{i=1}^k \Com{m_i}{n_i}) \nsum \Go_p$ with $k,p\in \N$ and $m_i,n_i \in \N$ for $1\leq i \leq k$.  
\end{prop}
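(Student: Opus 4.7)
The statement is essentially the combination of the two preceding lemmas, so my plan is simply to assemble them. The existence of some decomposition $\m A \cong (\Nsum_{i=1}^k \Com{m_i}{n_i}) \nsum \Go_p$ with $k,p \in \N$ and $m_i, n_i \in \N$ for $1 \le i \le k$ is immediate from Lemma~\ref{l:comm-decomp}. The uniqueness of the parameters $k$, $p$, $(m_i)_{i=1}^k$, and $(n_i)_{i=1}^k$ is precisely the content of the unnamed lemma directly preceding this proposition, which establishes that any two such decompositions of isomorphic algebras must have identical data.

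Concretely, the proof would read: by Lemma~\ref{l:comm-decomp}, there exist $k,p \in \N$ and $m_i,n_i \in \N$ for $1 \le i \le k$ such that $\m A \cong (\Nsum_{i=1}^k \Com{m_i}{n_i}) \nsum \Go_p$. If also $\m A \cong (\Nsum_{j=1}^l \Com{r_j}{s_j}) \nsum \Go_q$ for some $l,q,r_j,s_j \in \N$, then by the preceding lemma applied to the chain of isomorphisms between these two nested sums we have $k=l$, $p=q$, and $r_i = m_i$, $s_i = n_i$ for each $1 \le i \le k$, giving uniqueness.

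There is no substantive obstacle here, since both ingredients have already been proved. The only matter requiring any care is formal: ensuring that the existence part of Lemma~\ref{l:comm-decomp} and the uniqueness part of the preceding lemma really do combine cleanly, which they do because both are stated in terms of exactly the same normal form $(\Nsum_{i=1}^k \Com{m_i}{n_i}) \nsum \Go_p$. In a polished write-up I would likely just state the proposition and append a one-sentence proof citing Lemma~\ref{l:comm-decomp} for existence and the preceding lemma for uniqueness, since the real content of the classification has already been delivered by those two lemmas (whose proofs in turn rest on the Sugihara skeleton of Lemma~\ref{l:skeleton} and the partition into intervals $B_b$).
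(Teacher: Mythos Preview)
Your proposal is correct and matches the paper's approach exactly: the paper states this proposition without a separate proof, simply noting that the two preceding lemmas (Lemma~\ref{l:comm-decomp} for existence and the unnamed uniqueness lemma) together establish it. Your assembly of these two ingredients is precisely what is intended.
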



\section{Interpolation without exchange}\label{s:without_exchange}

Our main aim in this section is to prove the following result:

\begin{mthm}\label{t:main}\
\begin{enumerate}[label = \textup{(\roman*)}]
\item Continuum-many varieties of  idempotent semilinear residuated lattices have the amalgamation property and contain non-commutative members.
\item Continuum-many axiomatic extensions of $\pc{SemRL_{cm}}$ in which exchange is not derivable have the deductive interpolation property.
\end{enumerate}
\end{mthm}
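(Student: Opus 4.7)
The plan, as suggested in the introduction, is to associate to each minimal bi-infinite word $w \in \{0,1\}^\Z$ a variety $\V_w$ of idempotent semilinear residuated lattices such that: (a) each $\V_w$ has the amalgamation property; (b) each $\V_w$ contains a non-commutative member; and (c) distinct pairwise incomparable minimal bi-infinite words yield distinct varieties. Given (a)--(c), and the classical fact that there are continuum-many pairwise incomparable minimal bi-infinite words over $\{0,1\}$, part (i) follows immediately. Part (ii) is then immediate from the bridge theorem (Proposition~\ref{p:bridge}) together with Lemma~\ref{l:CEP}, noting that since each $\V_w$ contains a non-commutative member, commutativity fails in $\V_w$, so exchange is not derivable in the corresponding axiomatic extension of $\pc{SemRL_{cm}}$.

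For the construction, I would fix two small non-commutative $^\star$-involutive idempotent residuated chains $\m{B}_0$ and $\m{B}_1$ to serve as ``letters'' and, for each $w = (w_i)_{i \in \Z}$, form the nested sum $\m{A}_w := \Nsum_{\langle I,\leq\rangle} \m{C}_i$, where $\langle I,\leq\rangle$ is $\Z$ with an adjoined top element $\top$, $\m{C}_i = \m{B}_{w_i}$ for $i\in\Z$, and $\m{C}_\top$ is the trivial algebra. By Lemma~\ref{l:starinvnested}(ii), $\m{A}_w$ is a $^\star$-involutive idempotent residuated chain, and it inherits non-commutativity from its letters. Setting $\V_w := \opr{V}(\m{A}_w)$ then yields a variety of idempotent semilinear residuated lattices with $\m{A}_w \in \V_w$.

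The amalgamation property for $\V_w$ is where the real work lies. By Lemma~\ref{l:CEP}, $\V_w$ has the congruence extension property, and since $\V_w$ is semilinear its FSI members are exactly its residuated chains, which are trivially closed under subalgebras; Proposition~\ref{p:FusMet2022} therefore reduces the task to proving one-sided amalgamation for residuated chains in $\V_w$. By J\'onsson's lemma and congruence distributivity, such chains lie in $\opr{H}\opr{S}\opr{P}_U(\m{A}_w)$, and Proposition~\ref{p:projective} together with Lemma~\ref{l:1-gen}(iii) show that every such chain decomposes as a nested sum whose 1-generated components are drawn from those of $\m{A}_w$, so for finite chains they correspond to finite subwords (factors) of $w$. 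Given a span of such chains, I would construct a joint refinement of the two index orderings, assemble a larger nested sum $\m{D}$ whose 1-generated components remain controlled by the factor set of $w$, and invoke Proposition~\ref{p:sum-map} to lift the span embeddings. The crucial ingredient is minimality of $w$: every finite factor appearing in any chain of $\V_w$ occurs (arbitrarily often) in $w$, giving the combinatorial room needed for the amalgam to remain inside $\V_w$.

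To distinguish $\V_w$ from $\V_{w'}$ for incomparable minimal $w,w'$, one finds a finite factor $u$ of $w$ that is not a factor of $w'$; the corresponding finite nested sum of letters is then a member of $\V_w \setminus \V_{w'}$. The main obstacle I anticipate is precisely the one-sided amalgamation step: executing the joint refinement while controlling which nested-sum components may appear requires a careful combinatorial argument combining the structure theory of Section~\ref{s:IRL} with the minimality of $w$; the choice of the building blocks $\m{B}_0,\m{B}_1$ must also be made so that (c) holds and no unwanted identifications collapse different $\V_w$ into a single variety.
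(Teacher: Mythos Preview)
Your overall strategy is sound, but there is a genuine gap in the distinctness argument (c), and it is fatal to the construction as you describe it. In a nested sum $\m{A}_w = \Nsum_{\langle\Z,\leq\rangle}\m{B}_{w_i}$, subalgebras are obtained via arbitrary order-embeddings of index sets (Proposition~\ref{p:sum-map}), so the finite chains in $\opr{S}(\m{A}_w)$ correspond to finite \emph{subsequences} of $w$, not to finite \emph{factors} (consecutive subwords). Any non-constant minimal word $w$ contains both letters infinitely often, so every finite $\{0,1\}$-sequence occurs as a subsequence of $w$; hence every finite nested sum of copies of $\m{B}_0$ and $\m{B}_1$ already embeds into $\m{A}_w$ regardless of which such $w$ you choose. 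Consequently all your varieties $\V_w$ coincide (they are each generated by the class of all finite nested sums of $\m{B}_0$'s and $\m{B}_1$'s), and you obtain a single variety, not continuum-many. Minimality of $w$ buys nothing here, because the nested-sum construction does not record adjacency.

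The paper avoids this by encoding the bi-infinite word \emph{inside a single $1$-generated algebra} rather than in an external nested-sum index. It uses Galatos' strictly simple idempotent residuated chains $\m{A}_S$ (for $S\subseteq\Z$), whose multiplication table depends on $S$ in such a way that $\m{A}_S$ embeds into an ultrapower of $\m{A}_T$ if and only if $w_S\sqsubseteq w_T$ in the factor preorder. Because each $\m{A}_S$ is $1$-generated, it is itself one of the nested-sum components, so the factor structure of $w_S$ is genuinely visible at the level of $\V_S=\opr{V}(\m{A}_S)$. The amalgamation argument then proceeds as you outline: the FSI members of $\V_S$ are nested sums of algebras from $\K_S=\{\m{A}_{S'}:w_{S'}\sqsubseteq w_S\}$, and when $w_S$ is minimal all these are mutually embeddable into ultrapowers of one another, which gives enough room to build amalgams by combining index orders. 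The key difference from your plan is thus not in the amalgamation step but in where the word lives.
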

Theorem~\ref{t:main}(ii) follows from Theorem~\ref{t:main}(i), together with Proposition~\ref{p:bridge} and the fact that the variety of idempotent semilinear residuated lattices is an equivalent algebraic semantics for $\pc{SemRL_{cm}}$ (see Appendix~\ref{a:FL}). To prove Theorem~\ref{t:main}(i), we first recall Galatos' construction of continuum-many atoms in the lattice of subvarieties of idempotent semilinear residuated lattices~\cite{Gal2005}. Each of these atoms is generated by a single infinite non-commutative algebra $\m{A}_S$, and, as we show here, has the amalgamation property. 

Given any $S\subseteq\Z$, we let $A_S := \{a_i\mid i\in\Z\}\cup\{b_j\mid j\in\Z\}\cup\{\ut\}$, and totally order the elements of this set by stipulating $b_i< b_j<\ut<a_j<a_i$ for any $i,j\in\Z$ with $i<j$. For the multiplication, we let $\ut$ be the multiplicative unit and define for $i,j\in\Z$,  $a_ia_j := a_{\min\{i,j\}}$, $b_ib_j := b_{\min\{i,j\}}$, and 
\begin{align*}
a_ib_j := \begin{cases} 
•      a_i & \text{if }i<j\text{ or }i=j\in S \\
      b_j & \text{if }i>j\text{ or }i=j\notin S,
 \end{cases} \qquad
 b_ja_i := \begin{cases} 
      b_j & \text{if }j<i\text{ or }i=j\in S \\
      a_i & \text{if }j>i\text{ or }i=j\notin S.
   \end{cases}
\end{align*}
It is straightforward to check that this multiplication is residuated, and hence that the residual operations $\ld$ and $\rd$ satisfy for all $x,y\in A_S$,
\begin{align*}
x\ld y = \max\{z\in A_S\mid xz\leq y\},\qquad
y\rd x = \max\{z\in A_S\mid zx\leq y\}.
\end{align*}
We denote the residuated chain obtained in this way by ${\m A}_S$.

\begin{lemma}[{\cite[Corollary 5.2]{Gal2005}}]
Let $S\subseteq\Z$. Then ${\m A}_S$ is a strictly simple idempotent residuated chain, and hence generated by any element $x\in A_S\setc\{\ut\}$.
\end{lemma}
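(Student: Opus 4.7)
The plan is to first verify that ${\m A}_S$ is an idempotent residuated chain, then to deduce simplicity and the absence of non-trivial proper subalgebras from an explicit description of the unary operations $^\ell$ and $^r$ together with the structural theory from Section~\ref{s:IRL}.

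Associativity of the multiplication and the existence of residuals reduce to a straightforward case analysis on the types ($a$, $b$, or $\ut$) of the arguments and, where relevant, their membership in $S$. Idempotency is immediate, since $a_i a_i = a_{\min\{i,i\}} = a_i$ and $b_j b_j = b_{\min\{j,j\}} = b_j$. By Lemma~\ref{l:galoiscon}, the subuniverses of ${\m A}_S$ are then exactly the subsets containing $\ut$ that are closed under $^\ell$ and $^r$, so the problem reduces to understanding these two operations. A direct computation from the definition of multiplication yields
\begin{align*}
\{a_i^r,a_i^\ell\} = \{b_{i-1},b_i\} \quad \text{and} \quad \{b_j^r,b_j^\ell\} = \{a_j,a_{j+1}\}
\end{align*}
for every $i,j\in\Z$, independently of $S$ (although the precise assignment to $^\ell$ versus $^r$ does depend on $S$).

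Granted these formulas, the subuniverse generated by any fixed non-unit element of $A_S$ is immediately seen to be all of $A_S$: starting from, say, $a_0$, a single application of $^\ell$ and $^r$ yields $\{b_{-1},b_0\}$, another application produces $\{a_{-1},a_0,a_1\}$, and iterating in both directions exhausts $\{a_i\}_{i\in\Z} \cup \{b_j\}_{j\in\Z}$. An entirely analogous argument applies starting from any $a_i$ or $b_j$, establishing that ${\m A}_S$ has no non-trivial proper subalgebras. Since convex normal subalgebras are in particular subalgebras, and congruences of residuated lattices correspond to convex normal subalgebras via $\ut$-regularity (Section~\ref{s:IRL}), it follows that the only congruences of ${\m A}_S$ are the trivial ones, i.e., ${\m A}_S$ is simple.

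The main technical obstacle is the case analysis required to establish the formulas for $^\ell$ and $^r$; once these are in hand, and once one observes that the pairs $\{a_i^\ell,a_i^r\}$ and $\{b_j^\ell,b_j^r\}$ are determined independently of $S$, the remainder of the argument proceeds uniformly and without further case distinction on $S$.
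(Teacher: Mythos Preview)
The paper does not supply its own proof of this lemma; it is quoted verbatim from \cite[Corollary~5.2]{Gal2005}. Your argument is correct and self-contained: the key identities $\{a_i^\ell,a_i^r\}=\{b_{i-1},b_i\}$ and $\{b_j^\ell,b_j^r\}=\{a_j,a_{j+1}\}$ are exactly the ones the paper computes later in the proof of Lemma~\ref{l:Asstarinv}, and your use of Lemma~\ref{l:galoiscon} together with the $\Con{\m A}\cong\NC{\m A}$ isomorphism to pass from ``no non-trivial proper subalgebras'' to ``simple'' is the natural route.

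One cosmetic remark: the paper's definition of \emph{strictly simple} in Section~\ref{s:int and amal} includes finiteness, which $\m{A}_S$ plainly lacks. This is a slip in the definition rather than a defect in your proof or in the lemma; the intended content (simple with no non-trivial proper subalgebras, hence generated by any non-identity element) is precisely what you establish.
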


The algebras ${\m A}_S$ encode some of the dynamics of bi-infinite words. A \emph{word} over $\{0,1\}$ is a function $w\colon W\to \{0,1\}$, where $W$ is some interval of $\Z$. A word is \emph{finite} if $|W|$ is finite and \emph{bi-infinite} if $W=\Z$. We say that a finite word $v\colon W\to \{0,1\}$ is a \emph{subword} of a word $w$ if there exists an integer $k$ such that $v(i) = w(i+k)$ for all $i\in W$. For any $S\subseteq\Z$, we will often consider the characteristic function of $S$,
\begin{align*}
w_S(i) = 
\begin{cases} 
      1 & \text{if }i\in S \\
      0 & \text{if }i\notin S,
\end{cases}
\end{align*}
which is a bi-infinite word. Indeed, every bi-infinite word is of the form $w_S$ for some $S\subseteq\Z$. We define a pre-order $\sqsubseteq$ on the set of all bi-infinite words by setting $w_1\sqsubseteq w_2$ if and only if every finite subword of $w_1$ is a subword of $w_2$. For bi-infinite words $w_1,w_2$, we write $w_1\cong w_2$ if and only if $w_1\sqsubseteq w_2$ and $w_2\sqsubseteq w_1$. A bi-infinite word $w$ is said to be {\em minimal} if it is minimal with respect to the pre-order $\sqsubseteq$, i.e., if $w'\sqsubseteq w$ for some bi-infinite word $w'$, then $w'\cong w$. 

The following lemma collects some facts about the algebras ${\m A}_S$ that will be needed in what follows, summarizing portions of Lemma~5.1, Theorem~5.4, and Corollary~5.5 of \cite{Gal2005}.

\begin{lemma}[{\cite{Gal2005}}]\label{l:Gal2005}
Let $S,T\subseteq\Z$.
\begin{enumerate}[label = \textup{(\roman*)}]
\item $\vr({\m A}_S)\subseteq\vr({\m A}_{T})$ if and only if $w_S\sqsubseteq w_T$ if and only if ${\m A}_S$ embeds into the ultrapower ${\m A}_T^\N/U$ for every non-principal ultrafilter $U$ over $\N$.
\item Every non-trivial $1$-generated chain in $\vr({\m A}_S)$ is isomorphic to an algebra of the form ${\m A}_{S'}$ for some $S' \subseteq \Z$ with $w_{S'} \sqsubseteq w_S$. 
\item If $w_{S}$ is minimal, then $\vr({\m A}_S)$ is an atom in the subvariety lattice of the variety of semilinear idempotent residuated lattices.
\end{enumerate}
\end{lemma}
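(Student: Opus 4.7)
For part (i), I would establish the three equivalences via a cycle. The step from $\m{A}_S\hookrightarrow\m{A}_T^\N/U$ to $\vr(\m{A}_S)\subseteq\vr(\m{A}_T)$ is immediate from closure of $\vr(\m{A}_T)$ under $\opr{S}$ and $\opr{P}_U$. For $\vr(\m{A}_S)\subseteq\vr(\m{A}_T)\Rightarrow w_S\sqsubseteq w_T$, I would argue contrapositively: if a finite subword $v$ of $w_S$ of length $n$ is absent from $w_T$, I translate the non-occurrence of $v$ into an equation in $n$ variables, built from the dichotomy $a_ib_i\in\{a_i,b_i\}$ controlled by $i\in S$ and the residuation formulas of Lemma~\ref{l:idempotentops}, that is satisfied by all valuations into $\m{A}_T$ but refutable in $\m{A}_S$ by an assignment whose indices realize $v$. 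For $w_S\sqsubseteq w_T\Rightarrow\m{A}_S\hookrightarrow\m{A}_T^\N/U$ for every non-principal ultrafilter $U$, for each $n\in\N$ I would locate a shift $k_n\in\Z$ such that the length-$(2n+1)$ subword of $w_S$ centered at $0$ appears at position $k_n$ in $w_T$, and send $a_0$ to $[a_{k_n}]_U$; cofiniteness of the indices on which the local structure matches, together with {\L}o{\'s}'s theorem, yields an embedding into $\m{A}_T^\N/U$ for any non-principal $U$.

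For part (ii), let $\m{C}$ be a non-trivial $1$-generated chain in $\vr(\m{A}_S)$. Since $\vr(\m{A}_S)$ is congruence distributive, J\'onsson's lemma places the subdirectly irreducible members of $\vr(\m{A}_S)$ in $\opr{H}\opr{S}\opr{P}_U(\m{A}_S)$, and by Proposition~\ref{p:projective} (applied since $\m{A}_S$ and its ultrapowers are $^\star$-involutive idempotent residuated chains) homomorphic images become subalgebras, so such chains lie in $\opr{S}\opr{P}_U(\m{A}_S)$. Passing through a subdirect decomposition to an appropriate factor containing the image of a generator shows that $\m{C}$ embeds into some ultrapower $\m{A}_S^\N/U$. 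Analyzing the subalgebra generated by the image of the generator via Lemma~\ref{l:1-gen}, Lemma~\ref{l:idempotentops}, and Lemma~\ref{l:homomorphism}, the iterated applications of $^\ell$ and $^r$ produce a bi-infinite chain whose product table is governed by a set $S'\subseteq\Z$; reading off the pattern from the representative sequence and invoking {\L}o{\'s}'s theorem gives $w_{S'}\sqsubseteq w_S$ and identifies $\m{C}$ with $\m{A}_{S'}$.

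For part (iii), assume $w_S$ is minimal and let $\V$ be a non-trivial subvariety of $\vr(\m{A}_S)$. As $\V$ is semilinear, it is generated by its residuated chains, so $\V$ contains a non-trivial $1$-generated chain $\m{C}$. By part (ii), $\m{C}\cong\m{A}_{S'}$ for some $S'\subseteq\Z$ with $w_{S'}\sqsubseteq w_S$, and minimality of $w_S$ then forces $w_{S'}\cong w_S$. Part (i) yields $\vr(\m{A}_{S'})=\vr(\m{A}_S)$, and since $\m{A}_{S'}\in\V$ we conclude $\V=\vr(\m{A}_S)$, proving that $\vr(\m{A}_S)$ is an atom.

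The hardest step is the equational encoding in the $(\vr(\m{A}_S)\subseteq\vr(\m{A}_T))\Rightarrow(w_S\sqsubseteq w_T)$ direction of part (i): one must craft an equation in $n$ variables that detects the presence of a specific length-$n$ pattern in the defining word of the algebra. Conceptually this amounts to writing a first-order description of the pattern and unfolding it, via the product and residuation rules, into a lattice-polynomial equation; ensuring that the encoding is tight enough to be refuted in $\m{A}_S$ only by valuations realizing $v$ requires careful bookkeeping of all the cases in Lemma~\ref{l:idempotentops}.
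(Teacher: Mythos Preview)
The paper does not prove this lemma; it is quoted from \cite{Gal2005}, summarizing Lemma~5.1, Theorem~5.4, and Corollary~5.5 of that paper, so there is no in-paper argument to compare against. Your outline tracks the strategy of the original source and is sound in its architecture, with two caveats.

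First, in part (ii) you invoke Proposition~\ref{p:projective} to collapse $\opr{H}\opr{S}\opr{P}_U(\m{A}_S)$ to $\opr{S}\opr{P}_U(\m{A}_S)$, which requires $\m{A}_S$ to be $^\star$-involutive. In this paper that fact is Lemma~\ref{l:Asstarinv}, which is placed \emph{after} the present lemma; there is no logical circularity (Lemma~\ref{l:Asstarinv} is a direct computation independent of Lemma~\ref{l:Gal2005}), but in a self-contained write-up you would need to establish it first or inline the calculation.

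Second, the heart of part (ii)---that a non-trivial $1$-generated subalgebra of an ultrapower of $\m{A}_S$ is isomorphic to some $\m{A}_{S'}$ with $w_{S'}\sqsubseteq w_S$---is not delivered by Lemma~\ref{l:1-gen} alone. One must check that iterating $^\ell$ and $^r$ on any non-identity element of such an ultrapower yields a genuinely bi-infinite, non-repeating ladder carrying exactly the $\m{A}_{S'}$ multiplication table, then extract $w_{S'}$ and verify the subword relation via \L{}o\'s. This is precisely the content worked out in \cite[Lemma~5.1 and Theorem~5.4]{Gal2005}; your sketch gestures at it correctly, but the verification is substantial and would need to be filled in. The phrase ``passing through a subdirect decomposition'' is superfluous here: since $\m{C}$ is already a chain and hence finitely subdirectly irreducible, J\'onsson's lemma together with Proposition~\ref{p:projective} gives the embedding into an ultrapower directly.
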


It follows from part~(i) of this lemma that $\vr({\m A}_S)=\vr({\m A}_{T})$ if and only if $w_S\cong w_T$. Note that the proof that there are continuum-many atoms in the subvariety lattice of idempotent semilinear residuated lattices relies on the fact that there are continuum-many pairwise incomparable minimal bi-infinite words; see \cite{Gal2005,HedMor38}.

We next assemble several technical lemmas that allow us to show that $\vr({\m A}_S)$ has the amalgamation property for any $S$ such that $w_S$ is minimal.

\begin{lemma}\label{l:Asstarinv}
Let $S\subseteq\Z$. Then ${\m A}_S$ is a $^\star$-involutive idempotent residuated chain; in particular, each ${\m A}_S$ is admissible.
\end{lemma}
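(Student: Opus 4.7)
The plan is to prove $^\star$-involutivity of $\m{A}_S$ by direct computation, after which admissibility will follow for free from Lemma~\ref{l:starinvnested}(i). Idempotency is immediate from the definition ($a_i \pd a_i = a_{\min\{i,i\}} = a_i$, and similarly for $b_j$), and $\m{A}_S$ is already known to be a residuated chain from the setup. So the substance of the proof lies in computing the unary operations $x \mapsto x^r = x \ld \ut$ and $x \mapsto x^\ell = \ut \rd x$ and verifying $x^{\star\star} = x$ for every $x \in A_S$.

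First I would compute the residuals at $\ut$ element-by-element, using the formula $x \ld \ut = \max\{z \in A_S : xz \leq \ut\}$ (and symmetrically for $\ut \rd x$). Inspection of the multiplication table shows that for $z = a_k$ we always have $a_i a_k > \ut$, so $a_i^r$ must lie among the $b_j$. Since $a_i b_j \leq \ut$ exactly when $a_i b_j = b_j$, one has $a_i^r = b_{i-1}$ if $i \in S$ (because $a_i b_i = a_i > \ut$ in that case) and $a_i^r = b_i$ if $i \notin S$. A parallel case analysis for the other three computations yields
\[
a_i^\ell = \begin{cases} b_i & \text{if } i \in S \\ b_{i-1} & \text{if } i \notin S,\end{cases} \qquad b_j^r = \begin{cases} a_j & \text{if } j \in S \\ a_{j+1} & \text{if } j \notin S,\end{cases} \qquad b_j^\ell = \begin{cases} a_{j+1} & \text{if } j \in S \\ a_j & \text{if } j \notin S.\end{cases}
\]
Taking the meet of $x^\ell$ and $x^r$ and recalling $b_{i-1} < b_i$ and $a_{j+1} < a_j$, both cases collapse to the uniform formulas $a_i^\star = b_{i-1}$ and $b_j^\star = a_{j+1}$, independently of whether the relevant index lies in $S$.

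Applying the uniform formulas twice then gives $a_i^{\star\star} = b_{i-1}^\star = a_{(i-1)+1} = a_i$ and $b_j^{\star\star} = a_{j+1}^\star = b_{(j+1)-1} = b_j$, while $\ut^{\star\star} = \ut$ is trivial. This establishes $^\star$-involutivity, and admissibility of each $\m{A}_S$ then follows at once from Lemma~\ref{l:starinvnested}(i).

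The one mild obstacle is bookkeeping: the four residual computations each split into two subcases according to membership in $S$, so it is easy to conflate $b_{i-1}$ with $b_{i+1}$ or to get the order of indices on the $b$ side (where larger index means larger element) backwards relative to the $a$ side. I would therefore organize the proof around a single tabulation of $x^\ell$ and $x^r$, and then observe the ``miracle'' that the $S$-dependence disappears when one passes to $x^\star$ and hence to $x^{\star\star}$.
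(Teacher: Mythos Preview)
Your proposal is correct and follows essentially the same approach as the paper: both proceed by direct computation of $a_i^\ell,a_i^r,b_j^\ell,b_j^r$ (each splitting into two cases according to membership in $S$), observe that the $S$-dependence cancels in $x^\star$ to give the uniform formulas $a_i^\star=b_{i-1}$ and $b_j^\star=a_{j+1}$, and then conclude $x^{\star\star}=x$ and invoke Lemma~\ref{l:starinvnested}(i) for admissibility. Your tabulated values agree exactly with the paper's.
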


\begin{proof}
Direct computation shows that, for each $i\in\Z$:
\[ a_i^\ell = \begin{cases} 
      b_i & \text{if }i\in S \\
      b_{i-1} & \text{if }i\notin S,
   \end{cases}\hspace{0.3 in}
a_i^r = \begin{cases} 
      b_i & \text{if }i\notin S \\
      b_{i-1} & \text{if }i\in S,
   \end{cases}
\]
\[ b_i^\ell = \begin{cases} 
      a_i & \text{if }i\notin S \\
      a_{i+1} & \text{if }i\in S,
   \end{cases}\hspace{0.3 in} b_i^r = \begin{cases} 
      a_i & \text{if }i\in S \\
      a_{i+1} & \text{if }i\notin S.
   \end{cases}
\]
As a consequence, $b_i^\star = a_i\mt a_{i+1}=a_{i+1}$ and $a_i^\star = b_i\mt b_{i-1}= b_{i-1}$ for all $i\in S$. It follows that $b_i^{\star\star} = a_{i+1}^\star = b_{(i+1)-1} =b_i$ and $a_i^{\star\star} = b_{i-1}^\star = a_{(i-1)+1} = a_i$. Hence $x^{\star\star} = x$ for all $x$ and ${\m A}_S$ is $^\star$-involutive. That ${\m A}_S$ is admissible is immediate from part (i) of Lemma~\ref{l:starinvnested}.
\end{proof}

Now, for $S\subseteq \Z$, let $\V_S:=\vr(\mathbf{A}_S)$ be the variety generated by ${\m A}_S$ and define  
\[
\K_S = \iso(\{{\m A}_T\mid w_T \sqsubseteq w_S\}).
\]
It follows from parts (i) and (ii) of Lemma~\ref{l:Gal2005} that $\K_S$ consists of the non-trivial $1$-generated algebras in $\V_S$. Because $\V_S$ satisfies $x^{\star\star}\eq x$ by Lemma~\ref{l:Asstarinv}, each totally ordered member of $\V_S$ is isomorphic to a nested sum of algebras from $\K_S$ by part (iii) of Lemma~\ref{l:starinvnested}.

\begin{lemma}\label{l:nested-ultrapower}
Let $\tuple{I,\leq}$ be a chain, $(\m{A}_i)_{i\in I}$ an admissible indexed family of idempotent residuated chains, and $U$ an ultrafilter over some set $X$. Then the identity map $\iota \colon \Nsum_{\tuple{I,\leq}} (\m{A}_i^X/U) \to  (\Nsum_{\tuple{I,\leq}} \m{A}_i)^X/U$, $\iota([a]_U) = [a]_U$ is an embedding. 
\end{lemma}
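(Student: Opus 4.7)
The plan is to apply Lemma~\ref{l:homomorphism}: both $\Nsum_{\tuple{I,\leq}}(\m{A}_i^X/U)$ and $(\Nsum_{\tuple{I,\leq}} \m{A}_i)^X/U$ are idempotent residuated chains (idempotency, totality, and admissibility are first-order and hence preserved by ultraproducts, so each $\m{A}_i^X/U$ is admissible and the nested sum on the left is well defined), so it suffices to verify that $\iota$ is injective, order-preserving, sends $\ut$ to $\ut$, and preserves $x\mapsto x^\ell$ and $x\mapsto x^r$. Preservation of $\ut$ is immediate from the definition of $\iota$. Preservation of $x^\ell$ and $x^r$ follows from \L o\'s's theorem applied componentwise together with the ``in particular'' clause of Proposition~\ref{p:sum-map}, which says that each inclusion $\m{A}_i \hookrightarrow \Nsum_{\tuple{I,\leq}} \m{A}_i$ is an embedding, so that $a(x)^\ell$ and $a(x)^r$ computed in $\m{A}_i$ agree with those computed in $\Nsum_{\tuple{I,\leq}} \m{A}_i$.

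For injectivity, suppose $\iota([a]_U) = \iota([b]_U)$ with $[a]_U \in A_i^X/U$ and $[b]_U \in A_j^X/U$, so that $\{x \in X : a(x) = b(x)\} \in U$. If $i = j$ this gives $[a]_U = [b]_U$ at once; otherwise, the assumption $A_i \cap A_j = \{\ut\}$ from the definition of nested sum forces $a(x) = b(x) = \ut$ on a set in $U$, so both classes equal $[\ut]_U$, and these are identified in $\Nsum_{\tuple{I,\leq}}(\m{A}_i^X/U)$.

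For order preservation, I distinguish cases on indices. If $[a]_U, [b]_U \in A_i^X/U$, then \L o\'s's theorem transfers $\leq_i$ directly, and the order of $\Nsum_{\tuple{I,\leq}} \m{A}_i$ extends $\leq_i$. If $[a]_U \in A_i^X/U$ and $[b]_U \in A_j^X/U$ with $i < j$ and both non-unit, the ultrafilter property assigns $[a]_U$ a definite side of $[\ut]_U$ in $A_i^X/U$: exactly one of $\{x : a(x) <_i \ut\}$ and $\{x : a(x) >_i \ut\}$ belongs to $U$. The relation $[a]_U \leq [b]_U$ in $\Nsum_{\tuple{I,\leq}}(\m{A}_i^X/U)$ holds precisely when the first of these holds, and then clause~(2) in the definition of nested sum yields $a(x) \leq b(x)$ in $\Nsum_{\tuple{I,\leq}} \m{A}_i$ for each such $x$, hence $\iota([a]_U) \leq \iota([b]_U)$. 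The case $j < i$ and the cases where one class is the unit are handled by entirely analogous arguments.

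The main obstacle is organizing the bookkeeping in the cross-component case for order preservation: the nested sum order bridges distinct components via $\ut$ in a non-trivial way, so each element must be placed on a definite side of $\ut$ within its own component. The ultrafilter property is precisely what makes the argument go through, always assigning each non-unit class of $\m{A}_i^X/U$ a definite side of $[\ut]_U$, so that clauses (2) and (3) of the definition of nested sum transfer cleanly between $\Nsum_{\tuple{I,\leq}} \m{A}_i$ and $\Nsum_{\tuple{I,\leq}}(\m{A}_i^X/U)$, with admissibility of $\m{A}_i$ for $i \neq \top$ entering in the background to ensure that both nested sums are legitimate residuated chains.
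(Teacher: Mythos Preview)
Your proof is correct and follows essentially the same strategy as the paper's: both reduce the claim to Lemma~\ref{l:homomorphism} and verify injectivity, order preservation, and preservation of $\ut$, $^\ell$, $^r$ by case analysis on whether the elements lie in the same or different components of the nested sum. Your treatment is slightly more streamlined in places (you handle the cross-component order case with a single observation about which side of $\ut$ the element $[a]_U$ lies on, whereas the paper splits into two sub-cases, and you invoke Proposition~\ref{p:sum-map} explicitly for $^\ell,{}^r$-preservation where the paper does the same computation inline), but there is no substantive difference in approach.
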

\begin{proof}
First note that if an idempotent residuated chain $\m A$ is admissible, then so is every ultrapower of $\m A$, since admissibility can be expressed by the quasiequations $\lgal{x}\eq \ut \Rightarrow x \eq\ut$ and $\rgal{x} \eq \ut \Rightarrow x\eq \ut$. Hence $(\m{A}_i^X/U)_{i\in I}$ is also admissible. The map $\iota$ is well defined, since for each $i\in I$ and any $a = (a_x)_{x\in X},b = (b_x)_{x\in X} \in A_i^X$,
\begin{align*}
[a]_U = [b]_U \text{ in $\m{A}_i^X/U$} & \iff \{x \in X\mid a_x = b_x \} \in U\\
& \iff [a]_U = [b]_U \text{ in $(\Nsum_{\tuple{I,\leq}} \m{A}_i)^X/U$.} 
\end{align*}
Moreover, $\iota$ is clearly injective. To see that $\iota$ is order-preserving, consider any $[a]_U, [b]_U \in\Nsum_{\tuple{I,\leq}} ({A}_i^X/U) $  with $ [a]_U \leq [b]_U$. There are three cases: either $a,b \in A_i^X$, or $a \in A_i^X$ and $b \in A_j^X$ with $i<j$, or $a \in A_i^X$ and $b \in A_j^X$ with $i>j$.

If $a,b \in A_i^X$ for some $i \in I$, then $\{x \in X\mid a_x \leq b_x \} \in U$, so also $\iota([a]_U) \leq \iota([b]_U)$. Otherwise we may assume that $[a]_U, [b]_U \neq [\ut]_U$.

If $a \in A_i^X$, $b \in A_j^X$ with $i < j$, then $[a]_U < [b]_U < [\ut]_U$ or $[a]_U < [\ut]_U < [b]_U$.
If  $[a]_U < [b]_U < [\ut]_U$, then $\{x \in X\mid a_x < \ut \}, \{x \in X\mid b_x < \ut \} \in U$. But then also $\{x \in X\mid a_x < \ut, b_x < \ut\} \in U$.  Hence, since $i<j$, by definition of the nested sum, $\{x \in X\mid a_x <\ut, b_x < \ut\}  \subseteq \{x \in X\mid a_x <  b_x \}$, i.e.,  $\{x \in X\mid a_x <  b_x \} \in U$, yielding $\iota([a]_U) \leq \iota([b]_U)$. If  $[a]_U < [\ut]_U < [b]_U$, then  $\{x \in X\mid a_x < \ut \}, \{x \in X\mid \ut < b_x \} \in U$. Hence clearly also $\iota([a]_U) < \iota([\ut]_U) <  \iota([b]_U)$.

Similarly, if $a \in A_i^X$ and $b \in A_j^X$ with $i > j$, it follows that $\iota([a]_U) \leq \iota([b]_U)$ in $(\Nsum_{\tuple{I,\leq}} \m{A}_i)^X/U$. Further, since $\iota([\ut]_U) = [\ut]_U$, $\iota$ preserves the multiplicative unit. Finally, to show that $\iota$ preserves $\lgal{\ }$ and $\rgal{\ }$, consider any $a = (a_x)_{x\in X} \in A_i$ for some $i\in I$. Then $\iota(\lgal{[a]}_U) = \iota([(\lgal{a}_x)_{x\in X}]_U) = [(\lgal{a}_x)_{x\in X}]_U = \lgal{[a]}_U$ and similarly for $\rgal{\ }$. Hence, the claim follows from Lemma~\ref{l:homomorphism}.
\end{proof}

\begin{lemma}\label{l:nsum-closure}
Let $S \subseteq \Z$.
\begin{enumerate}[label = \textup{(\roman*)}]
\item $\hspu({\m A}_S)$ is the class of totally ordered members of $\V_S$. In particular, $\hspu({\m A}_S)$ consists of the finitely subdirectly irreducible members of $\V_S$.
\item Suppose further that $w_S$ is minimal.Then $\hspu(\m{A}_S)$ is closed under nested sums. In particular, the finitely subdirectly irreducible members of $\V_S$ are exactly the nested sums of members of $\K_S$.
\end{enumerate}
\end{lemma}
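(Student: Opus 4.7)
For part~(i), every member of $\hspu({\m A}_S)$ lies in $\V_S$ and is totally ordered: total ordering is first-order, hence preserved by $\pu$, and it is manifestly preserved by $\sub$ and by $\hm$ applied to residuated chains. Conversely, because $\V_S$ is generated by the chain ${\m A}_S$ it is semilinear, so its finitely subdirectly irreducible members are exactly its totally ordered members; J\'onsson's Lemma, applicable because residuated lattices are congruence distributive, places these in $\hspu({\m A}_S)$, yielding the desired equalities.

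For part~(ii), let ${\m B} = \Nsum_{i \in I}{\m C}_i$ with each ${\m C}_i \in \hspu({\m A}_S)$. By Lemma~\ref{l:Asstarinv} and the equational nature of $^\star$-involutivity, each ${\m C}_i$ is $^\star$-involutive, and hence so is ${\m B}$ by Lemma~\ref{l:starinvnested}(ii). By Lemma~\ref{l:starinvnested}(iii), ${\m B}$ decomposes as a nested sum of its $1$-generated subalgebras; since the operations $^\ell$ and $^r$ act componentwise on a nested sum, each such subalgebra lies within a single ${\m C}_i$, hence is a $1$-generated chain in $\V_S$, and so by Lemma~\ref{l:Gal2005}(ii) is isomorphic to some ${\m A}_T\in\K_S$. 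Applying the same reasoning to an arbitrary chain in $\V_S$ --- equivalently, by part~(i), to any FSI member of $\V_S$ --- shows that such algebras are likewise nested sums of members of $\K_S$. It therefore remains to show that every nested sum of members of $\K_S$ lies in $\hspu({\m A}_S)$.

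By part~(i), this further reduces to showing such a nested sum lies in $\V_S$. Since $\V_S$ is equational and each ${\m A}_T\in\K_S$ is generated by any of its non-identity elements, every finitely generated subalgebra of a nested sum of members of $\K_S$ is itself a \emph{finite} such nested sum ${\m A}_{T_1}\nsum\cdots\nsum{\m A}_{T_k}$ with each $w_{T_i}\sqsubseteq w_S$. I plan to combine Lemma~\ref{l:Gal2005}(i) (supplying embeddings ${\m A}_{T_i}\hookrightarrow{\m A}_S^\N/U$ into a fixed non-principal ultrapower), Proposition~\ref{p:sum-map} (lifting these componentwise to an embedding into $\Nsum_{i=1}^k({\m A}_S^\N/U)$), and Lemma~\ref{l:nested-ultrapower} (embedding the latter into $(\Nsum_{i=1}^k{\m A}_S)^\N/U$) to reduce further to the single assertion that $\Nsum_{i=1}^k{\m A}_S\in\V_S$ for every $k\geq 1$. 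This last step is the main obstacle, and is the one place where minimality of $w_S$ is genuinely needed. I would prove it by realizing $\Nsum_{i=1}^k{\m A}_S$ as a subalgebra of a sufficiently saturated ultrapower ${\m A}_S^X/U$, built up by induction on $k$: starting from the diagonal embedding of ${\m A}_S$, each additional copy is produced as the $1$-generated subalgebra of an element $c$ that lies strictly below every non-unit element of the previously placed copies and whose $1$-generated subalgebra is itself isomorphic to ${\m A}_S$. The existence of such $c$ is forced by combining $\omega_1$-saturation of non-principal ultrapowers with the uniform recurrence of $w_S$ that is the combinatorial content of minimality.
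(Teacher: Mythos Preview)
Your argument is correct, and for part~(i) and the reduction in part~(ii) to finite nested sums it parallels the paper closely. The genuine divergence is in the final step: you reduce to showing $\Nsum_{i=1}^k{\m A}_S\in\V_S$ and then propose to realize this algebra inside an $\aleph_1$-saturated ultrapower by an inductive type-realization argument, using uniform recurrence of $w_S$ (which is indeed the dynamical content of minimality) to ensure that the type ``$c$ lies below the previous copies and satisfies the atomic diagram of a generator of ${\m A}_S$'' is finitely satisfiable. This works, but it is more model-theoretically loaded than necessary. The paper instead observes that ${\m A}_S^{\N}/U$ is uncountable while each component of its nested-sum decomposition is countable, so there are infinitely many components; one simply picks $n$ of them, obtaining $\Nsum_{i=1}^n{\m A}_{T_i}\in\sub\pu({\m A}_S)$ directly. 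Minimality then enters only to embed each ${\m A}_{S_i}$ into ${\m A}_{T_i}^{\N}/U$ via Lemma~\ref{l:Gal2005}(i), after which the same combination of Proposition~\ref{p:sum-map} and Lemma~\ref{l:nested-ultrapower} you already invoke finishes the job. The paper's route avoids saturation entirely and never needs the components to be isomorphic to ${\m A}_S$ itself; your route buys a cleaner reduction (to the single algebra $\Nsum_{i=1}^k{\m A}_S$) at the price of a heavier realization step.
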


\begin{proof}
(i) Each member of $\hspu({\m A}_S)$ is totally ordered by \L{}o\'{s}'s Theorem and Proposition~\ref{p:projective}. Conversely, it is easy to see that each totally ordered member of $\V_S$ is finitely subdirectly irreducible, and hence contained in $\hspu({\m A}_S)$ by Jónsson's Lemma.

(ii) First, recall that any chain in $\hspu({\m A}_S)$ is a nested sum of algebras from $\K_S$. Hence, any nested sum of chains in $\hspu({\m A}_S)$ is a nested sum of algebras from $\K_S$. Hence it suffices to show that $\hspu({\m A}_S)$ contains every nested sum of algebras from $\K_S$. Since every algebra embeds into an ultraproduct of its finitely generated subalgebras, every nested sum of algebras from $\K_S$ embeds into an ultraproduct of finite nested sums from $\K_S$. It therefore suffices to show that every finite nested sum of members of $\K_S$ is contained in $\hspu(\m{A}_S)$.  Let $\m{A}_{S_1},\dots, \m{A}_{S_n} \in \K_S$. We will show that $\Nsum_{i=1}^n \m{A}_{S_i} \in \hspu(\m{A}_S)$. 

To see this, let $U$ be a non-principal ultrafilter over $\N$. Since $\m{A}_S$ is infinite, it follows by~\cite[Theorem 1.28]{Frayne1963} that the algebra $\m{A}_S^{\N}/U \in \hspu(\m{A}_S)$ is uncountable. Hence there exists a chain $\tuple{I,\leq}$ and an indexed family $(\m{A}_{T_i})_{i\in I}$ with $\m{A}_{T_i} \in \K_S$ such that $\m{A}_S^{\N}/U  \cong \Nsum_{\tuple{I,\leq}} \m{A}_{T_i}$ and, since for each $i\in I$ the algebra $\m{A}_{T_i}$ is countable, $I$ is infinite.

Since $I$ is infinite we can consider the chain $\{1 < 2 < \dots < n \}$ as a subchain of  $\tuple{I,\leq}$ and $\Nsum_{i=1}^n \m{A}_{T_{i}} \in \hspu(\m{A}_S)$. Since $w_S$ is minimal, it follows by part~(i) of Lemma~\ref{l:Gal2005} that  $w_{S_i} \cong w_S \cong w_{T_{i}}$ for each $i\in \{1,\dots n\}$, and there exists an embedding   $f_i \colon \m{A}_{S_i} \to \m{A}_{T_{i}}^\N/U$.  Hence, by Proposition~\ref{p:sum-map}, there exists an embedding $f\colon \Nsum_{i=1}^n \m{A}_{S_i}  \to \Nsum_{i=1}^n (\m{A}_{T_{i}}^\N/U)$. But also, by Lemma~\ref{l:nested-ultrapower}, there exists an embedding $\iota \colon \Nsum_{i=1}^n (\m{A}_{T_{i}}^\N/U) \to (\Nsum_{i=1}^n \m{A}_{T_{i}})^\N/U$ and composing the two maps yields an embedding $\iota \circ f \colon  \Nsum_{i=1}^n \m{A}_{S_i} \to (\Nsum_{i=1}^n \m{A}_{T_{i}})^\N/U$, showing that $ \Nsum_{i=1}^n \m{A}_{S_i}  \in \hspu(\m{A}_S)$.
\end{proof}

\begin{lemma}\label{l:amalg}
Suppose that $S\subseteq\Z$ is such that $w_S$ is minimal. Then $\V_S$ has the amalgamation property.
\end{lemma}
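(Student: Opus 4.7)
The plan is to apply Proposition~\ref{p:FusMet2022}, reducing the amalgamation property for $\V_S$ to the one-sided amalgamation property on its class of finitely subdirectly irreducible (FSI) members. Its hypotheses hold: $\V_S$ has the congruence extension property by Lemma~\ref{l:CEP}, and by Lemma~\ref{l:nsum-closure}(i) the FSI members of $\V_S$ coincide with its chains, a class trivially closed under subalgebras.

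To prove the one-sided amalgamation property for the FSI members, I would exploit the structural description of Lemma~\ref{l:nsum-closure}(ii): each chain in $\V_S$ is a nested sum of algebras in $\K_S$. Combined with Lemma~\ref{l:1-gen}(iii), one may write each chain canonically as the nested sum of its 1-generated subalgebras. Thus, given a span $(\m{A},\m{B},\m{C},i_B,i_C)$ of chains, write $\m{A} = \Nsum_{i\in I} \m{A}_i$, $\m{B} = \Nsum_{j\in J} \m{B}_j$, $\m{C} = \Nsum_{k\in K} \m{C}_k$ for these canonical decompositions. Since each component $\m{A}_T \in \K_S$ is strictly simple (its only proper subalgebra is $\{\ut\}$), the image of $\m{A}_i$ under $i_B$ is a 1-generated subalgebra of $\m{B}$ that must coincide with some $\m{B}_{f_B(i)}$, so the restriction $i_B|_{\m{A}_i}$ is an isomorphism $\m{A}_i \cong \m{B}_{f_B(i)}$. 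The assignment $i \mapsto f_B(i)$ gives an order-embedding $f_B\colon I \to J$, since distinct components of $\m{A}$ must land in distinct components of $\m{B}$ and the nested-sum order is preserved; an analogous argument yields an order-embedding $f_C\colon I \to K$.

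I would then assemble the amalgam as follows. Form the chain $L$ from the disjoint union $J \sqcup K$ by identifying $f_B(i)$ with $f_C(i)$ for each $i \in I$ and interleaving the ``gap'' positions---those outside the images of $f_B$ and $f_C$---in any way consistent with both component orders. For $\ell \in L$, put $\m{D}_\ell := \m{B}_\ell$ when $\ell$ arises from $J$, and $\m{D}_\ell := \m{C}_\ell$ otherwise, and set $\m{D} := \Nsum_{\ell \in L} \m{D}_\ell$; this lies in $\V_S$ by Lemma~\ref{l:nsum-closure}(ii). Proposition~\ref{p:sum-map} now lifts the obvious component maps---canonical inclusions from each $\m{B}_j$, and on the $\m{C}$ side the isomorphisms $\m{C}_{f_C(i)} \cong \m{A}_i \cong \m{B}_{f_B(i)}$ at identified positions together with canonical inclusions elsewhere---to embeddings $j_B\colon \m{B} \to \m{D}$ and $j_C\colon \m{C} \to \m{D}$.

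The main obstacle is ensuring that the component-wise maps on the $\m{C}$ side are chosen consistently so that $j_C i_C = j_B i_B$ holds on all of $\m{A}$. This amounts to taking the transport isomorphism $\m{C}_{f_C(i)} \to \m{B}_{f_B(i)}$ at each identified position to be exactly $i_B|_{\m{A}_i} \circ (i_C|_{\m{A}_i})^{-1}$, which is well-defined since both restrictions are isomorphisms by the strict simplicity argument above. The remaining arrangement of gap positions in $L$, the minor technicality of aligning top elements as required by Proposition~\ref{p:sum-map} (harmlessly handled by adjoining trivial top components), and the verification that $j_B$ and $j_C$ are embeddings then follow mechanically.
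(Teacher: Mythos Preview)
Your proposal is correct and follows essentially the same approach as the paper's proof: reduce to one-sided amalgamation of chains via Proposition~\ref{p:FusMet2022}, decompose the chains as nested sums of their 1-generated subalgebras, and interleave the index chains to build the amalgam. The paper is more terse---it assumes without loss of generality that $\m{A}$ is a common subalgebra of $\m{B}$ and $\m{C}$ and that the index sets already overlap exactly on the 1-generated subalgebras of $\m{A}$---whereas you make explicit, via strict simplicity of the components, why each $i_B|_{\m{A}_i}$ is an isomorphism onto a component $\m{B}_{f_B(i)}$ and why the resulting $f_B$ is an order-embedding; this is exactly the content hidden in the paper's ``without loss of generality''.
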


\begin{proof}
By Proposition~\ref{p:FusMet2022}, it suffices to show that the class of finitely subdirectly irreducible members of $\V_S$ has the one-sided amalgamation property; this suffices because each variety $\V_S$ is congruence distributive, has the congruence extension property (Lemma~\ref{l:CEP}), and is closed under taking subalgebras (Lemma~\ref{l:nsum-closure}(i)). In fact, we show that the class of finitely subdirectly irreducible members of $\V_S$ has the amalgamation property. 

Let ${\m A}, {\m B}, {\m C}$ be finitely subdirectly irreducible members of $\V_S$, and assume without loss of generality that ${\m A}$ is a subalgebra of each of ${\m B}$ and ${\m C}$. By Lemma~\ref{l:nsum-closure}, we have that ${\m A}, {\m B}, {\m C}$ are nested sums of $1$-generated algebras in $\V_S$, and we write
\begin{align*}
{\m B} = \Nsum_{(I,\leq_I)} {\m B}_i,\qquad
{\m C} = \Nsum_{(J,\leq_J)} {\m C}_j,
\end{align*}
where each $\m{B}_i$, $\m{C}_j$ is 1-generated. Further, we may assume without loss of generality that $I\cap J$ consists of exactly those indices where ${\m A}_i =  {\m B}_i = {\m C}_i$ is a 1-generated subalgebra of ${\m A}$. Take any total order $\leq$ on $K=I\cup J$ extending $\leq_I\cup\leq_J$. For each $k\in K$, define:
\begin{enumerate}
\item ${\m D}_k = {\m A}_k$ if $k\in I\cap J$;
\item ${\m D}_k = {\m B}_k$ if $k\in I\setc J$;
\item ${\m D}_k = {\m C}_k$ if $k\in J\setc I$.
\end{enumerate}
Then ${\m D} = \Nsum_{(K,\leq)} {\m D}_k$ is an amalgam of ${\m A}, {\m B}, {\m C}$ as desired and, by Lemma~\ref{l:nsum-closure}, ${\m D} \in \V_S$. 
\end{proof}

We finally arrive at the proof of this section's main result:

\begin{proof}[Proof of Theorem~\ref{t:main}]
(i) Lemma~\ref{l:amalg} gives that $\V_S$ has the amalgamation property for each $S\subseteq\Z$ with $w_S$ minimal. From \cite{Gal2005}, there are continuum-many distinct varieties of this form. Further, each such $\V_S$ is a variety of idempotent semilinear residuated lattices that contains the non-commutative algebra $\m{A}_S$.

(ii) Immediate from (i).
\end{proof}

Among other things, Theorem~\ref{t:main} implies that there are continuum-many axiomatic extensions of $\pc{FL_{cm}}$ that have the deductive interpolation property in which exchange is not derivable. We show now that there are continuum-many axiomatic extensions in which exchange is not derivable that \emph{lack} the deductive interpolation property. 

\begin{prop}\label{p:DIP fails}\
\begin{enumerate}[label = \textup{(\roman*)}]
\item There are continuum-many varieties of idempotent semilinear residuated lattices that contain non-commutative members and lack the amalgamation property.
\item There are continuum-many axiomatic extensions of $\pc{SemRL_{cm}}$ in which exchange is not derivable and the deductive interpolation property fails.
\end{enumerate}
\end{prop}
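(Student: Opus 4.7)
The plan is to derive part~(ii) from part~(i) via Proposition~\ref{p:bridge}(ii) and Lemma~\ref{l:CEP}, and to prove part~(i) by perturbing each of the continuum-many atoms $\V_S$ from Theorem~\ref{t:main} by a fixed commutative $^\star$-involutive subvariety $\cls{W}_0$ that already lacks the amalgamation property. Concretely, for each $S \subseteq \Z$ with $w_S$ minimal, we set $\cls{W}_S := \cls{W}_0 \vee \V_S$. To fix such a $\cls{W}_0$, we observe that only finitely many varieties of odd Sugihara monoids enjoy AP (as in~\cite{MM12}), whereas the varieties $\vr(\Nsum_{i=1}^k \Com{0}{0})$ for $k \in \N$ already supply infinitely many pairwise distinct odd Sugihara monoid subvarieties. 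So some odd Sugihara monoid subvariety $\cls{W}_0$ lacks AP.

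For distinctness of the $\cls{W}_S$, Jónsson's Lemma places the subdirectly irreducibles of the congruence distributive variety $\cls{W}_0 \vee \V_{S'}$ inside $\cls{W}_0 \cup \V_{S'}$. Since the strictly simple $\m{A}_S$ is subdirectly irreducible and non-commutative, the assumption $\cls{W}_S = \cls{W}_{S'}$ forces $\m{A}_S \in \V_{S'}$; atomicity of $\V_S$ then gives $\V_S = \V_{S'}$, that is, $w_S \cong w_{S'}$. By construction, $\m{A}_S \in \cls{W}_S$, so each $\cls{W}_S$ contains a non-commutative member.

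To show that $\cls{W}_S$ fails AP, we use Proposition~\ref{p:FusMet2022} to reduce the question to failure of one-sided amalgamation among FSIs, which is applicable here thanks to Lemma~\ref{l:CEP} and the fact that FSIs of semilinear varieties are chains. We fix a span $(\m{A},\m{B},\m{C},i_B,i_C)$ of commutative FSIs in $\cls{W}_0$ witnessing the failure of one-sided AP in $\cls{W}_0$, and suppose for contradiction that $(\m{D},j_B,j_C)$ is a one-sided amalgam in $\cls{W}_S$ with $\m{D}$ a chain. Since both $\cls{W}_0$ and $\V_S$ satisfy $x^{\star\star}\eq x$, so does $\cls{W}_S$, and Lemma~\ref{l:1-gen}(iii) expresses $\m{D}$ as a nested sum of its $1$-generated subalgebras. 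A Jónsson-style argument, together with Lemmas~\ref{l:nested-ultrapower} and~\ref{l:nsum-closure}, identifies each such $1$-generated subchain as either a Sugihara chain in $\cls{W}_0$ or an isomorphic copy of the strictly simple $\m{A}_S$. Since every non-trivial subalgebra of an $\m{A}_S$-component equals all of $\m{A}_S$ and is therefore non-commutative, whereas cross-component products in a nested sum commute trivially, every commutative subalgebra of $\m{D}$ is confined to the sub-nested-sum $\m{D}'$ of its $\cls{W}_0$-components, which itself lies in $\cls{W}_0$. Thus $j_B(\m{B})$ and $j_C(\m{C})$ sit inside $\m{D}'\in\cls{W}_0$, yielding a one-sided amalgam of the span in $\cls{W}_0$ --- a contradiction.

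The principal obstacle is the concluding structural step: confirming that every $1$-generated $^\star$-involutive chain in $\cls{W}_S$ lies in $\cls{W}_0 \cup \{\m{A}_S\}$ up to isomorphism, and that commutative subalgebras of the resulting nested sum are confined to its $\cls{W}_0$-components. This hinges on the fact that cross-component products in nested sums reduce to an identity factor (so commute), while the strictly simple $\m{A}_S$ admits no non-trivial commutative subalgebra, forcing every commutative element of $\m{D}$ outside its $\m{A}_S$-layers.
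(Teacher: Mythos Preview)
Your approach differs substantially from the paper's and is considerably more involved. The paper simply invokes \cite[Theorem~5.2]{FusGal2023b}, which exhibits a single span $(\m{A},\m{B},\m{C},i_B,i_C)$ of finite idempotent residuated chains that has \emph{no amalgam in any variety of idempotent semilinear residuated lattices whatsoever}. Setting $\cls{W}_S:=\vr(\V_S\cup\{\m{B},\m{C}\})$ then immediately yields continuum-many varieties failing AP, with no structural analysis of putative amalgams required. Your route --- adjoining a fixed odd-Sugihara subvariety $\cls{W}_0$ lacking AP --- can be made to work, but trades a one-line citation for a genuine structural argument.

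As written, that argument has a gap. You conclude that the sub-nested-sum $\m{D}'$ of the commutative components of $\m{D}$ lies in $\cls{W}_0$, having established only that each individual component does. But a nested sum of chains from a proper odd-Sugihara subvariety need not remain in that subvariety: if $\cls{W}_0$ were, say, $\vr(\Nsum_{i=1}^k\Com{0}{0})$ for some $k$, then a nested sum of two of its $(2k{+}1)$-element chains would already escape $\cls{W}_0$. Your component-level J\'onsson argument does not rule this out. (A minor related imprecision: the $\V_S$-components are $\m{A}_T$'s with $w_T\cong w_S$, not necessarily isomorphic to $\m{A}_S$ itself.)

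The fix is straightforward and in fact eliminates the nested-sum detour entirely. Apply J\'onsson's Lemma directly to $\m{D}$: since $\hm\sub\pu(\cls{W}_0\cup\V_S)\subseteq\cls{W}_0\cup\V_S$ (ultraproducts, subalgebras, and homomorphic images each preserve membership in one variety or the other), every chain in $\cls{W}_S=\cls{W}_0\vee\V_S$ lies wholly in $\cls{W}_0$ or wholly in $\V_S$. If $\m{D}\in\cls{W}_0$, you contradict the choice of the span; if $\m{D}\in\V_S$, then the embedding $j_B$ lands a non-trivial commutative $\m{B}$ inside a chain of $\V_S$, which is impossible since every non-identity element there generates a full (non-commutative, strictly simple) $\m{A}_T$-layer.
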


\begin{proof}
The proof of \cite[Theorem~5.2]{FusGal2023b} exhibits a span $(\m{A},\m{B},\m{C},i_B,i_C)$ of finite idempotent residuated chains $\m{A},\m{B},\m{C}$ such that there is no idempotent semilinear residuated lattice comprising an amalgam for this span. For each $S\subseteq\Z$, let $\mathsf{W}_S$ be the variety generated by $\V_S\cup\{\m{B},\m{C}\}$. It is not hard to see, by an application of Jónsson's Lemma and the fact that there are continuum-many pairwise incomparable minimal bi-infinite words, that there are continuum-many distinct varieties of the form $\mathsf{W}_S$, where $w_S$ is minimal. Each $\mathsf{W}_S$ is a variety of idempotent semilinear residuated lattices, so the span $(\m{A},\m{B},\m{C},i_B,i_C)$ has no amalgam in $\mathsf{W}_S$. This proves part (i), and part (ii) follows as in the proof of Theorem~\ref{t:main}.
\end{proof}


\section{Interpolation with exchange}\label{s:with_exchange}

In this section, we prove the following two results:

\begin{mthm}\label{t:main_finite}\
\begin{enumerate}[label = \textup{(\roman*)}]
\item Exactly sixty varieties of commutative idempotent semilinear residuated lattices have the amalgamation property.
\item Exactly sixty axiomatic extensions of $\pc{SemRL_{ecm}}$ have the deductive interpolation property.
\end{enumerate}
\end{mthm}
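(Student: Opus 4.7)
Part (ii) is immediate from (i) together with Proposition~\ref{p:bridge} and Lemma~\ref{l:CEP}, so I focus the plan on (i). Write $\V$ for the variety of commutative idempotent semilinear residuated lattices. The overall strategy is to exploit the canonical nested-sum decomposition of finite commutative idempotent residuated chains established at the end of Section~\ref{s:IRL}, reduce the amalgamation question to a combinatorial problem on the strictly simple ``blocks'' appearing in a subvariety, and enumerate.

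The preliminary reductions are standard. Since $\V$ is congruence distributive, has the congruence extension property (Lemma~\ref{l:CEP}), and is semilinear (so that $\Vfsi$ coincides with the class of chains in $\V$ and is in particular closed under subalgebras), Proposition~\ref{p:FusMet2022} reduces amalgamation for any subvariety $\cls{W} \subseteq \V$ to the one-sided amalgamation property for $(\cls{W})_{\mathrm{FSI}}$. I would also verify that $\V$ is locally finite, using the commutative instance of Lemma~\ref{l:idempotentops} to bound the subalgebra generated by a finite set; in particular J\'onsson's Lemma is available in its sharpest form.

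Next I would invoke the uniqueness statement proved just before Theorem~\ref{t:main_finite} to identify each finite chain of $\V$ with a unique nested sum $(\Nsum_{i=1}^k \Com{m_i}{n_i}) \nsum \Go_p$, whose building blocks $\Com{m}{n}$ and $\Go_p$ are strictly simple. Combining this with J\'onsson's Lemma, each subvariety $\cls{W} \subseteq \V$ is determined by the set $\cls{S}(\cls{W})$ of isomorphism types of strictly simple blocks it contains; conversely, by Proposition~\ref{p:sum-map}, $(\cls{W})_{\mathrm{FSI}}$ is the class of nested sums of members of $\cls{S}(\cls{W})$. Moreover $\cls{S}(\cls{W})$ is a down-set for the componentwise embeddability orders $\Com{m}{n} \hookrightarrow \Com{m'}{n'}$ iff $m\leq m'$ and $n\leq n'$, and $\Go_p \hookrightarrow \Go_q$ iff $p\leq q$, so the subvariety lattice of $\V$ is in bijection with a concrete lattice of pairs of down-sets on $\N^2$ and $\N$.

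The heart of the proof is a block-by-block analysis of amalgamation. Given a span of chains in $(\cls{W})_{\mathrm{FSI}}$, the Sugihara skeleton (Lemma~\ref{l:skeleton}) aligns the three nested-sum decompositions and splits the problem into independent sub-problems, one per block of the common subalgebra, with blocks that appear in only one extension contributing automatically. Proposition~\ref{p:sum-map} then assembles any family of block-level amalgams from $\cls{S}(\cls{W})$ into a global amalgam inside $(\cls{W})_{\mathrm{FSI}}$, in the spirit of Lemma~\ref{l:amalg}. Thus amalgamation for $\cls{W}$ reduces to the purely combinatorial requirement that every block-level span be amalgamable within $\cls{S}(\cls{W})$. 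The hard part of the proof will then be the meticulous casework at the block level, classifying exactly which spans $\Com{m_0}{n_0} \hookrightarrow \Com{m_1}{n_1}$, $\Com{m_0}{n_0} \hookrightarrow \Com{m_2}{n_2}$ admit amalgams in which blocks (and similarly for $\Go_p$ and for mixed $\Com{}{}/\Go$ situations forced by the order between blocks), and the subsequent enumeration of down-set pairs that satisfy the induced closure condition. Anchoring the count against the two known extremes---the Maksimova-style list of G\"odel-algebra subvarieties with amalgamation, corresponding to pure $\Go_p$-families, and the nine-element list of Sugihara-monoid subvarieties with amalgamation from~\cite{MM12}, corresponding to pure $\Com{0}{0}$-families---should then confirm the advertised total of sixty.
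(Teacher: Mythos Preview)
Your reduction breaks at a crucial step. You assert that for any subvariety $\cls{W}\subseteq\V$, the class $(\cls{W})_{\mathrm{FSI}}$ is exactly the class of nested sums of the blocks in $\cls{S}(\cls{W})$, and hence that $\cls{W}$ is determined by $\cls{S}(\cls{W})$. This is false. For instance, $\vr(\Com{0}{0})$ has as its only non-trivial finite chain $\Com{0}{0}$ itself, while $\Com{0}{0}\nsum\Com{0}{0}$ --- a nested sum of blocks in $\cls{S}(\vr(\Com{0}{0}))$ --- is \emph{not} in this variety. More generally, the paper's classes $\Fin{m}{p}{n}$ and $\Inf{m}{p}{n}$ contain the same blocks but generate distinct varieties, and further distinctions arise depending on whether $\Com{0}{0}\nsum\Go_1$ lies in $\cls{W}$. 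So the ``bijection with pairs of down-sets on $\N^2$ and $\N$'' and the subsequent block-by-block amalgamation programme both collapse: the global amalgam you assemble via Proposition~\ref{p:sum-map} need not lie in $\cls{W}$. (A minor point: the blocks $\Com{m}{n}$ and $\Go_p$ are not strictly simple when $m,n,p>0$, since they have proper non-trivial subalgebras.)

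The paper proceeds quite differently. Rather than parameterising subvarieties by blocks, it first exhibits explicitly (Lemma~\ref{l:com-amalg-vars}) sixty specific varieties with the amalgamation property, and then proves a collection of \emph{closure properties} (Lemma~\ref{l:AP-closure}): if $\cls{W}$ has the amalgamation property and contains, say, $\m{A}\nsum\Go_2$, then it is forced to contain $\m{A}\nsum\Go_n$ for all $n$; similar forcing applies to the length of the nested sum and to the parameters $m,n$ in $\Com{m}{n}$. These are established by constructing carefully chosen spans whose one-sided amalgams must contain the target algebra as a subalgebra. A case analysis on which of a small set of ``test'' chains lies in $\cls{W}$ then pins $\cls{W}$ down to one of the sixty listed varieties. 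Note also that the distinction between amalgamation and one-sided amalgamation is essential here: several of the sixty varieties (those of the form $\vr(\Fin{m}{0}{n}\cup\Ee{p})$ and $\vr(\Inf{0}{0}{n}\cup\Ee{p})$) fail full amalgamation on chains but have the one-sided property, which is all that Proposition~\ref{p:FusMet2022} requires.
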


\begin{mthm}\label{t:main_mcs}
There are exactly sixty varieties of commutative idempotent semilinear residuated lattices whose first-order theories have a model completion.
\end{mthm}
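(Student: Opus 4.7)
The plan is to reduce Theorem~\ref{t:main_mcs} directly to Theorem~\ref{t:main_finite}(i) by means of Wheeler's bridge result, Proposition~\ref{p:model_completion}, which asserts that the first-order theory of a locally finite variety admits a model completion if and only if the variety has the amalgamation property. The principal prerequisite is therefore that every subvariety of the variety of commutative idempotent semilinear residuated lattices be locally finite, and for this it suffices to establish local finiteness of the ambient variety.

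The first step I would carry out is to establish, or simply cite, that the variety of commutative idempotent semilinear residuated lattices is locally finite. This fact is already present in the structural literature on these algebras referenced in Section~\ref{s:IRL} (see in particular \cite{Raf07,GR15,GilJipMet2020}); it can also be extracted from the structure theory developed in that section, since by Lemma~\ref{l:basic-properties}, in a commutative idempotent residuated chain both multiplication and residuation are computed from the lattice operations together with the unary operation $x \mapsto x^\star$, so the subuniverse generated by a finite set together with $\ut$ coincides with the closure of that set under $x \mapsto x^\star$, and a combinatorial argument of the sort underlying Lemma~\ref{l:comm-decomp} shows that this closure is always finite. Congruence distributivity of residuated lattices, together with the fact that all finitely subdirectly irreducible members of the variety are chains, then upgrades chain-level local finiteness to local finiteness of the entire variety, from which local finiteness of every subvariety follows automatically.

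Once local finiteness is in hand, the conclusion is immediate: for any subvariety $\V$ of the variety of commutative idempotent semilinear residuated lattices, Proposition~\ref{p:model_completion} yields that the first-order theory of $\V$ has a model completion if and only if $\V$ has the amalgamation property, and Theorem~\ref{t:main_finite}(i) then identifies exactly sixty such $\V$. I do not foresee any substantive obstacle here beyond the local finiteness step itself; once local finiteness and Theorem~\ref{t:main_finite}(i) are secured, Theorem~\ref{t:main_mcs} is a one-line corollary, and the bulk of the work of the paper has already been spent in proving the amalgamation classification.
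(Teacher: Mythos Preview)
Your proposal is correct and matches the paper's own argument: the paper likewise derives Theorem~\ref{t:main_mcs} as an immediate corollary of Theorem~\ref{t:main_finite}(i) via Proposition~\ref{p:model_completion}, citing local finiteness of commutative idempotent semilinear residuated lattices from \cite[Corollary~3.6]{GilJipMet2020}. The only difference is that you sketch an alternative route to local finiteness through the structure theory of Section~\ref{s:IRL}, whereas the paper is content with the citation.
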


Part (ii) of Theorem~\ref{t:main_finite} follows from part (i) together with Proposition~\ref{p:bridge}, using the fact that the variety of commutative idempotent semilinear residuated lattices is an equivalent algebraic semantics for $\pc{SemRL_{ecm}}$. Since commutative idempotent semilinear residuated lattices are locally finite (\cite[Corollary~3.6]{GilJipMet2020}), this result together with Proposition~\ref{p:model_completion} also implies Theorem~\ref{t:main_mcs}. The main challenge of this section is therefore to prove part (i) of Theorem~\ref{t:main_finite}.

For a class $\K$ of algebras, let $\Kfin$ denote the class of finite members of $\K$, and for a variety $\V$ of idempotent semilinear residuated lattices, let $\Vfc$ denote the class of finite totally ordered members of $\V$. The following two lemmas allow us to restrict our attention to embeddings between finite idempotent commutative residuated chains.

\begin{lemma}[{\cite[Theorem 2.3]{Graetzer2009}}]\label{l:GraQua}
Let $\V$ be any locally finite variety and let $\K \subseteq \V$ be a class of finite algebras such that $\hm\sub(\K) = \K$. Then $(\hspu(\K))_{\mathrm{fin}} = \K$. 
\end{lemma}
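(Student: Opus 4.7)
The inclusion $\K \subseteq (\hspu(\K))_{\mathrm{fin}}$ is immediate, since every member of $\K$ is finite and $\K \subseteq \hspu(\K)$. The substance of the lemma is the reverse inclusion, so I would focus my plan there. Fix a finite $\m{A} \in \hspu(\K)$. Unfolding the operator, there exist an indexed family $(\m{B}_i)_{i\in I}$ in $\K$, an ultrafilter $U$ on $I$, a subalgebra $\m{C} \le \prod_{i\in I} \m{B}_i/U$, and a surjective homomorphism $\pi \colon \m{C} \twoheadrightarrow \m{A}$. Since $\m{A}$ is finite, pick finitely many elements of $\m{C}$ whose images under $\pi$ generate $\m{A}$, and replace $\m{C}$ by the subalgebra they generate. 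As $\V$ is locally finite this replacement is finite, is still in $\V$, and still surjects onto $\m{A}$; write $c_1,\dots,c_k$ for its generators, with $c_j = [(c_j(i))_{i \in I}]_U$.

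Next, for each $i \in I$, let $\m{C}_i$ be the subalgebra of $\m{B}_i$ generated by $c_1(i),\dots,c_k(i)$. Since $\K$ is closed under $\sub$, each $\m{C}_i$ lies in $\K$ and is in particular finite. The key combinatorial step is a pigeonhole argument on the ultrafilter. In the locally finite variety $\V$, the free $k$-generated algebra $\m{F}_\V(k)$ is finite, so there are only finitely many congruences on it, hence only finitely many isomorphism types of $k$-generated algebras in $\V$ equipped with a distinguished generating $k$-tuple. The function assigning to each $i \in I$ the isomorphism type of the pointed algebra $(\m{C}_i,c_1(i),\dots,c_k(i))$ therefore takes finitely many values, so by the ultrafilter property there exist a fixed pointed algebra $(\m{D},d_1,\dots,d_k)$ with $\m{D} \in \K$ and a set $X \in U$ such that for every $i \in X$ there is an isomorphism $\phi_i \colon \m{C}_i \to \m{D}$ with $\phi_i(c_j(i)) = d_j$ for $1 \le j \le k$.

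From here I would produce an embedding $\m{C} \hookrightarrow \m{D}$ by setting $\psi(t^{\m{C}}(c_1,\dots,c_k)) = t^{\m{D}}(d_1,\dots,d_k)$ for each term $t$. Well-definedness and injectivity are straightforward applications of {\L}o\'s's Theorem: an equation $t^{\m{C}}(c_1,\dots,c_k) = t'^{\m{C}}(c_1,\dots,c_k)$ corresponds to a set in $U$ on which the same equation holds componentwise, and intersecting with $X$ and applying $\phi_i$ (respectively $\phi_i^{-1}$) transports the equation between $\m{C}$ and $\m{D}$. Once $\m{C}$ embeds in $\m{D}$, closure of $\K$ under $\sub$ gives $\m{C} \in \K$, and then closure under $\hm$ together with the surjection $\pi$ gives $\m{A} \in \K$, completing the argument.

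The only genuinely delicate point is step two, where one must identify the right invariant of each $\m{C}_i$ so that the ultrafilter pigeonhole both chops the index set correctly and lets the resulting isomorphisms fit together into a single map on the ultraproduct. Indexing by \emph{pointed} isomorphism type (equivalently, by the congruence on $\m{F}_\V(k)$ that collapses $\m{F}_\V(k)$ onto $\m{C}_i$ sending each free generator $\overline{x}_j$ to $c_j(i)$) is what makes this work; indexing by the isomorphism type of $\m{C}_i$ alone would leave the generator assignments unreconciled and stop {\L}o\'s from transporting equations coherently.
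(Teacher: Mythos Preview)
Your argument is correct. The paper does not supply its own proof of this lemma; it simply cites the result from Gr\"atzer and Quackenbush, so there is no in-paper proof to compare against. Your write-up provides a clean, self-contained proof along standard lines: reduce to a finitely generated (hence finite) $\m{C}$, pigeonhole on the finitely many pointed isomorphism types of $k$-generated algebras in the locally finite variety to pin down a single $\m{D}\in\K$ realized on a large set in the ultrafilter, and then use \L o\'s's Theorem to transport term-identities and obtain an embedding $\m{C}\hookrightarrow\m{D}$. Your remark that one must track \emph{pointed} isomorphism types (equivalently, congruences on $\m{F}_\V(k)$) rather than bare isomorphism types is exactly the point that makes the well-definedness and injectivity of $\psi$ go through.

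One small cosmetic comment: once you have shown $\psi$ is well-defined and a homomorphism, injectivity follows automatically because $d_1,\dots,d_k$ generate $\m{D}$ and $\psi$ is surjective onto the subalgebra they generate, so in fact $\psi$ is an isomorphism $\m{C}\cong\m{D}$; you do not need a separate \L o\'s argument for injectivity (though the one you sketch is also fine).
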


\begin{lemma}\label{l:ap-1ap-chains}
Let $\V$ be a locally finite variety of idempotent semilinear residuated lattices. Then $\V$ has the amalgamation property if and only if $\Vfc$ has the one-sided amalgamation property.
\end{lemma}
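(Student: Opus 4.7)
My plan is to leverage Proposition~\ref{p:FusMet2022}, using the fact that semilinearity makes the FSI members of $\V$ coincide with its chains.

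For the forward direction, suppose $\V$ has the amalgamation property and let $({\m A},{\m B},{\m C},i_B,i_C)$ be a span in $\Vfc$. By AP, this span has an amalgam $({\m D},j_B,j_C)$ in $\V$, and local finiteness lets us replace ${\m D}$ with the (finite) subalgebra of itself generated by $j_B(B)\cup j_C(C)$. By semilinearity, ${\m D}$ embeds subdirectly into a finite product $\prod_{k=1}^n {\m E}_k$ of chains ${\m E}_k\in \Vfc$. Because ${\m B}$ is a chain and hence finitely subdirectly irreducible, the congruences $\ker(\pi_k\circ j_B)$ on ${\m B}$ have trivial intersection, so meet-irreducibility of $\De_B$ in $\Con{\m{B}}$ forces $\ker(\pi_k\circ j_B)=\De_B$ for some $k$. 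For this $k$, the triple $({\m E}_k,\pi_k\circ j_B,\pi_k\circ j_C)$ is a one-sided amalgam in $\Vfc$.

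For the converse, suppose $\Vfc$ has the one-sided amalgamation property. Since $\V$ has CEP (inherited from Lemma~\ref{l:CEP}) and its FSI members are exactly its chains---which are closed under subalgebras---Proposition~\ref{p:FusMet2022} reduces the amalgamation property of $\V$ to the one-sided amalgamation property for the chains in $\V$. Given a span $({\m A},{\m B},{\m C},i_B,i_C)$ of chains, I would identify ${\m A}$ with a common subalgebra of ${\m B}$ and ${\m C}$ and consider the directed set $I$ of triples $\a=({\m A}_0,{\m B}_0,{\m C}_0)$ of finite subalgebras with ${\m A}_0\leq {\m A}$, ${\m A}_0\leq {\m B}_0\leq {\m B}$, and ${\m A}_0\leq {\m C}_0\leq {\m C}$, ordered componentwise. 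Each $\a\in I$ is a span in $\Vfc$ and hence admits a finite chain one-sided amalgam $({\m D}_\a,\be_\a,\ga_\a)$ in $\Vfc$; I would extend $\be_\a$ and $\ga_\a$ to all of $B$ and $C$ arbitrarily, then fix an ultrafilter $\mathcal{U}$ on $I$ containing every cone $\hat{\a}=\{\a'\in I:\a\leq \a'\}$. The ultraproduct ${\m D}=\prod_{\a\in I}{\m D}_\a/\mathcal{U}$ lies in $\V$ and is a chain (both properties being preserved under ultraproducts), and the maps $j_B(b)=[\be_\a(b)]_\mathcal{U}$ and $j_C(c)=[\ga_\a(c)]_\mathcal{U}$ furnish the required one-sided amalgam.

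The main obstacle I anticipate is the routine-but-finicky ultraproduct bookkeeping in the converse direction. For injectivity of $j_B$, given distinct $b,b'\in B$, I would pick some $\a_0\in I$ with $\{b,b'\}\subseteq B_{0,\a_0}$; then every $\a\in \hat{\a_0}$ satisfies $\be_\a(b)\neq \be_\a(b')$ because $\be_\a$ restricts to the one-sided amalgam embedding on $B_{0,\a}\supseteq B_{0,\a_0}$. Preservation of operations and the agreement $j_B\circ i_B=j_C\circ i_C$ follow via the same template: locate the relevant data within a single $\a_0\in I$ and use $\hat{\a_0}\in \mathcal{U}$ to transfer the property to the ultraproduct.
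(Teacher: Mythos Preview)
Your proof is correct, but it takes a longer and more self-contained route than the paper's. The paper simply invokes the full strength of \cite[Corollary~3.4]{FusMet2022}, which already gives that $\V$ has the amalgamation property if and only if every span of \emph{finitely generated} totally ordered members has a one-sided amalgam among the chains of $\V$; local finiteness then identifies ``finitely generated'' with ``finite'' and the lemma drops out in one line. You, by contrast, use only the version of that result recorded as Proposition~\ref{p:FusMet2022} (which speaks of all FSI members, not just finitely generated ones) and manufacture the reduction to finite chains yourself: for the forward direction you cut the amalgam down to a finite algebra, decompose it subdirectly, and use meet-irreducibility of $\De_B$ to select a suitable coordinate; for the converse you lift one-sided amalgams of finite subspans to arbitrary spans of chains via a directed ultraproduct. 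Both directions are sound, and the ultraproduct bookkeeping you sketch goes through exactly as you describe. What your approach buys is independence from the precise formulation of the external corollary (you need nothing beyond what the paper actually states as Proposition~\ref{p:FusMet2022}); what the paper's approach buys is brevity. A minor streamlining: your forward direction could also be obtained from Proposition~\ref{p:FusMet2022} plus the observation that a one-sided amalgam among chains can always be replaced by the (finite, still totally ordered) subalgebra generated by the two images---this avoids redoing the subdirect-decomposition argument by hand.
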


\begin{proof}
Note first that $\V$ has the congruence extension property by Lemma~\ref{l:CEP}. Moreover, the class of finitely subdirectly irreducible members of $\V$ consists of the totally ordered members of $\V$, and is closed under subalgebras. Hence, by \cite[Corollary~3.4]{FusMet2022}, $\V$ has the amalgamation property if and only if every span of finitely generated totally ordered members of $\V$ has a one-sided amalgam in the class of totally ordered members of $\V$. But, since $\V$ is locally finite the finitely generated totally ordered members are exactly the members of $\Vfc$.
\end{proof}

Recall that the algebras $\Go_n$ and $\Com{m}{n}$ ($n,m\in\mathbb{N}$) are defined in Section~\ref{s:IRL}.

\begin{lemma}\label{l:embed-equiv}
\hfill
\begin{enumerate}[label = \textup{(\roman*)}]
\item An injective map $f\colon \go_p \to \go_q$ is an embedding of $\Go_p$ into $\Go_q$ if and only if it is order-preserving  and $f(\ut) = \ut$.
\item An injective map $f\colon \com{m}{n} \to \com{r}{s}$ is an embedding of $\Com{m}{n}$ into $\Com{r}{s}$ if and only if it is order-preserving, $f(a_0) = a_0$, $f(b_0) = b_0$,  and $f(\ut) = \ut$.
\end{enumerate}
\end{lemma}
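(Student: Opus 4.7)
The plan is to reduce both statements to Lemma~\ref{l:homomorphism}, which characterizes an injective map between idempotent residuated chains as an embedding precisely when it is order-preserving and commutes with $\ut$, $x \mapsto x^\ell$, and $x \mapsto x^r$. The forward implication in each case is essentially immediate from the definition of an embedding, with a small extra argument needed in~(ii) to show that $a_0$ and $b_0$ must be fixed. The substance lies in the converse, which amounts to verifying preservation of $^\ell$ and $^r$ from the listed hypotheses.

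For part~(i), the decisive observation is that $\Go_p$ is a relative Stone algebra: for every $y \in \go_p$ one has $y \to \ut = \ut$, so $x^\ell = x^r = \ut$ uniformly in $\Go_p$, and likewise in $\Go_q$. Preservation of $^\ell$ and $^r$ is therefore automatic once $f(\ut) = \ut$, and sufficiency follows immediately from Lemma~\ref{l:homomorphism}.

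For part~(ii), I would first carry out the residual computations in $\Com{m}{n}$ directly from the multiplication table, obtaining
\[
a_i^\ell = a_i^r = b_0, \qquad b_k^\ell = b_k^r = a_0, \qquad \ut^\ell = \ut^r = \ut,
\]
with the analogous identities in $\Com{r}{s}$. Given an injective, order-preserving $f$ with $f(\ut) = \ut$, $f(a_0) = a_0$, and $f(b_0) = b_0$, the image of any $a_i$ lies strictly above $\ut$ and the image of any $b_k$ lies strictly below, so the values of $f(a_i)^\ast$ and $f(b_k)^\ast$ (for $\ast \in \{\ell, r\}$) are again $b_0$ and $a_0$ respectively; matching against $f(a_i^\ast) = f(b_0) = b_0$ and $f(b_k^\ast) = f(a_0) = a_0$ then yields preservation of $^\ell$ and $^r$, and Lemma~\ref{l:homomorphism} delivers that $f$ is an embedding. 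For the forward direction, the only nontrivial point is that an embedding fixes $a_0$ and $b_0$; this I would extract by applying $f$ to the identity $a_0 = a_0^{\ell\ell}$ (valid in $\Com{m}{n}$) and noting that every element of $\com{r}{s}$ strictly above $\ut$ is sent to $a_0$ by $^{\ell\ell}$, with a symmetric argument for $b_0$.

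The main piece of genuine work is therefore the residual computation in $\Com{m}{n}$, which must be handled case by case from the implicit definition of $\to$ as a maximum. Once this is in place, the rest is a direct application of Lemma~\ref{l:homomorphism}, and I do not anticipate any deeper obstacle.
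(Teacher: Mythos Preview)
Your proposal is correct and follows essentially the same approach as the paper: both reduce to Lemma~\ref{l:homomorphism}, observe that $x^\ell = x^r = \ut$ throughout $\Go_p$ for part~(i), and for part~(ii) compute that $a_i^\star = b_0$, $b_k^\star = a_0$ in $\Com{m}{n}$ to verify preservation of $^\ell,{}^r$ in the converse, while using the $^{\star\star}$-fixed-point characterization (your $a_0 = a_0^{\ell\ell}$ argument is exactly the paper's observation that $\{x \mid x^{\star\star} = x\} = \{b_0, \ut, a_0\}$) to pin down $f(a_0)$ and $f(b_0)$ in the forward direction.
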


\begin{proof}
(i) Note that $\inv{x} = \ut$ and $\inv{y} = \ut$ for all $x \in \go_p$ and $y \in \go_q$. The claim therefore follows by Lemma~\ref{l:homomorphism}.

(ii) Note that $\{x \in \com{m}{n} \mid \invv{x} = x \} = \{b_0 < e < a_0 \}$ for all $n,m \in \N$. Hence, for every embedding $f\colon \Com{m}{n} \to \Com{r}{s}$  we must have $f(a_0) = a_0$ and $f(b_0) = b_0$. The other properties are clear. Conversely, let $f\colon \com{m}{n} \to \com{r}{s}$  be an injective and order-preserving map such that $f(a_0) = a_0$, $f(b_0) = b_0$, and $f(\ut) = \ut$. Since $f$ is order-preserving, $\ut < f(a_i)$ for all $1\leq i\leq n$. So $f(\inv{a_i}) = f(b_0) = b_0 = \inv{f(a_i)}$, and, similarly, $f(\inv{b_j}) = a_0 = \inv{f(b_j)}$ for all $1 \leq j \leq m$. Hence $f$ is a homomorphism, by Lemma~\ref{l:homomorphism}.
\end{proof}

\begin{lemma}\label{l:nsum-embed-equiv}
Suppose that ${\m A} = (\Nsum_{i=1}^k \Com{m_i}{n_i}) \nsum \Go_p$ and ${\m B} = (\Nsum_{j=1}^l \Com{r_j}{s_j}) \nsum \Go_q$. Then there is a one-to-one correspondence between embeddings $h\colon\m{A}\to\m{B}$ and triples $(f,g,\{h_i \}_{i=1}^k)$ such that $f\colon \{1,\dots, k \} \to \{ 1, \dots, l \}$ is an order-embedding, $g\colon \Go_p \to \Go_q$ is an embedding, and $h_i\colon \Com{m_i}{n_i} \to \Com{r_{f(i)}}{s_{f(i)}}$ is an embedding for each  $1\leq i \leq k$.
\end{lemma}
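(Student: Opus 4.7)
The plan is to establish the bijection by two mutually inverse constructions: one going from a triple to an embedding via Proposition~\ref{p:sum-map}, and one going from an embedding to a triple by exploiting how any homomorphism must respect the Sugihara-skeleton decomposition of Lemma~\ref{l:comm-decomp}.

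For the forward direction, given a triple $(f,g,\{h_i\}_{i=1}^k)$, I would extend $f$ by sending a fresh top element to $\top$, obtaining an order-embedding $\tilde f\colon \{1,\dots,k,\top\}\to\{1,\dots,l,\top\}$ with $\tilde f(\top)=\top$, and pair it with the family of component embeddings $h_1,\dots,h_k$ together with $g$ placed at the top index. Proposition~\ref{p:sum-map} then produces an embedding $\m{A}\to\m{B}$ whose restriction to each $\Com{m_i}{n_i}$ is $h_i$ and whose restriction to $\Go_p$ is $g$.

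For the backward direction, suppose $h\colon \m{A}\to\m{B}$ is an embedding. Because $h$ preserves the term $x^{\star\star}$, it sends the Sugihara skeleton of $\m{A}$, namely $\{b_0^1<\cdots<b_0^k<\ut<a_0^k<\cdots<a_0^1\}$, into the Sugihara skeleton of $\m{B}$. Each $h(b_0^i)$ is a strictly sub-$\ut$ skeleton element, hence equal to $b_0^{f(i)}$ for a unique $f(i)\in\{1,\dots,l\}$; order-preservation and injectivity of $h$ make the resulting $f$ an order-embedding, and the identity $(b_0^i)^\star=a_0^i$ forces $h(a_0^i)=a_0^{f(i)}$. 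Next, using $\invv{h(x)}=h(\invv{x})$ together with the fact (Lemma~\ref{l:skeleton}) that the blocks $B_b$ partition $\m{A}$ as the fibres of the map $x\mapsto \invv{x}$, the restriction of $h$ to the $i$-th block $B_{b_0^i}\cup\{\ut\}\cup B_{a_0^i}\cong \Com{m_i}{n_i}$ lands in $B_{b_0^{f(i)}}\cup\{\ut\}\cup B_{a_0^{f(i)}}\cong \Com{r_{f(i)}}{s_{f(i)}}$, producing via Lemma~\ref{l:embed-equiv}(ii) an embedding $h_i\colon \Com{m_i}{n_i}\to \Com{r_{f(i)}}{s_{f(i)}}$; analogously, $h$ restricts to an embedding $g\colon \Go_p\to\Go_q$ by Lemma~\ref{l:embed-equiv}(i).

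Finally, I would verify that these two constructions are mutually inverse. Applying the forward construction to a triple extracted from $h$ yields an embedding agreeing with $h$ on every block, and hence everywhere since $\m{A}$ is the union of its blocks; conversely, extracting a triple from the embedding produced by Proposition~\ref{p:sum-map} returns the original data, since that proposition defines the assembled embedding blockwise from $(f,g,\{h_i\})$. The only somewhat delicate point is confirming that an arbitrary embedding must respect the skeleton-based partition of $\m{A}$, but this follows directly from the preservation of the term $x^{\star\star}$, so no serious obstacle is anticipated.
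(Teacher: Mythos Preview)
Your proposal is correct and follows essentially the same approach as the paper: both directions use Proposition~\ref{p:sum-map} for assembling triples into embeddings, and both extract triples from an embedding by restricting to the nested-sum components. The paper is terser---it simply asserts that $h[\com{m_i}{n_i}] \subseteq \com{r_{f(i)}}{s_{f(i)}}$ for some $f(i)$ and that $h[\go_p]\subseteq\go_q$---whereas you supply the underlying reason via preservation of $x^{\star\star}$ and the block partition of Lemma~\ref{l:skeleton}, which is exactly what justifies the paper's assertion.
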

\begin{proof}
Let $(f,g,\{h_i \}_{i=1}^k)$ be a triple such that $f\colon \{1,\dots, k \} \to \{ 1, \dots, l \}$ is an order-embedding, $g\colon \Go_p \to \Go_q$ is an embedding, and $h_i\colon \Com{m_i}{n_i} \to \Com{r_{f(i)}}{s_{f(i)}}$ is an embedding for all $1\leq i \leq k$. By Proposition~\ref{p:sum-map}, the following map is an embedding of $\m{A}$ into $\m{B}$:
\[
h\colon A\to B;\quad
h(a) =\begin{cases}
h_i(a) & \text{if } a \in \com{m_i}{n_i} \\
g(a) & \text{if } a \in \go_p.
\end{cases}
\]
Conversely let $h\colon\m{A}\to\m{B}$ be an embedding. Then for each $1\leq i\leq k$ there is a $f(j) \in \{1,\dots, l\}$ such that $h[\com{m_i}{n_i}] \subseteq \com{r_{f(i)}}{s_{f(i)}}$. So there is a map $f\colon \{ 1 \dots, k \} \to \{1, \dots ,l \}$ that is injective, since $h$ is an embedding, and order-preserving, since otherwise the definition of the nested sum would yield a contradiction to the fact that $h$ is a homomorphism. Hence for $1\leq i \leq k$ we can define the embedding $h_i\colon \Com{m_i}{n_i} \to \Com{r_{f(i)}}{s_{f(i)}}$ to be the restriction of $h$ to $\com{m_i}{n_i}$. Note also that $h[\go_p] \subseteq \go_q$, so we can define the embedding $g\colon \Go_p \to \Go_q$ to be the restriction of $h$ to $\go_p$.  Hence we obtain the desired triple $(f,g,\{ h_i \}_{i=1}^k)$

Finally it is straightforward to check that the two constructions are inverse to each other.
\end{proof}

For $m,n,p \in \{0,1,\omega\}$, we define the following classes of commutative idempotent residuated chains:

\begin{align*}
\Ee{p} &= \iso(\{ \Go_q \mid q \leq p\}), \\
\Fin{m}{p}{n} &= \iso(\{\Com{r}{s} \nsum \Go_q, \Go_q \mid r\leq m, s\leq n, q \leq p \}), \\
 \Inf{m}{p}{n} &= \iso(\{(\Nsum_{i=1}^k \Com{r_i}{s_i}) \nsum \Go_q \mid k \in \N, r_i \leq m, s_i \leq n, q \leq p \}).
\end{align*}
Note that $\vr(\Ee{\omega})$ is the variety of relative Stone algebras,  $\vr(\Inf{0}{0}{0})$ is the variety of odd Sugihara monoids, and $\vr( \Inf{\omega}{\omega}{\omega})$ is the variety of commutative idempotent semilinear residuated lattices.

The next two lemmas follow easily from the definitions of the respective classes.

\begin{lemma}\label{l:hom-image}
Suppose that ${\m B} =  (\Nsum_{j=1}^l \Com{r_j}{s_j}) \nsum \Go_q$ is a homomorphic image of ${\m A} =  (\Nsum_{i=1}^k \Com{m_i}{n_i}) \nsum \Go_p$. Then $l\leq k$, $r_j \leq \max \{ m_i  \mid 1 \leq i \leq k\}$, $s_j \leq \max \{ n_i  \mid 1 \leq i \leq k\}$, and $q \leq \max (\{ m_i  \mid 1 \leq i \leq k\} \cup \{ p \})$ for $1\leq j \leq l$.
\end{lemma}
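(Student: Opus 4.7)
My plan is to analyze the surjective homomorphism $h \colon \m{A} \to \m{B}$ via its restriction to the Sugihara skeleton, and then transfer bounds block by block. Write $\m{A}_S$ and $\m{B}_S$ for the Sugihara skeletons provided by Lemma~\ref{l:skeleton}, so that $\m{A}_S \cong \Nsum_{i=1}^k \Com{0}{0}$ and $\m{B}_S \cong \Nsum_{j=1}^l \Com{0}{0}$. Since $h$ preserves $^{\star\star}$, it restricts to a homomorphism $\sigma \colon \m{A}_S \to \m{B}_S$, and $\sigma$ is surjective because any $b = \invv{b} \in \m{B}_S$ equals $h(\invv{a})$ for any $h$-preimage $a$ of $b$.

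The key technical step is to classify the convex normal subalgebras of $\Nsum_{i=1}^k \Com{0}{0}$. A direct computation yields $(b_0^j)^\star = a_0^j$ for each $j$, so any such subalgebra containing $b_0^j$ must also contain $a_0^j$ and, by convexity, the whole interval $[b_0^j, a_0^j]$. Hence the convex normal subalgebras form a chain $\{ \ut \} = C_{k+1} \subset C_k \subset \dots \subset C_1$, where $C_i := \{ b_0^j : j \geq i \} \cup \{\ut\} \cup \{ a_0^j : j \geq i \}$, with $(\Nsum_{i=1}^k \Com{0}{0})/C_i \cong \Nsum_{j=1}^{i-1} \Com{0}{0}$. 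Consequently, the kernel of $\sigma$ must be $C_{l+1}$, which gives $l \leq k$. Moreover, $\sigma$ is rigid: it is order-preserving and restricts to a bijection between the $2l$ non-unit skeleton elements of the outer levels of $\m{A}_S$ and of $\m{B}_S$, so under the natural indexing $\sigma(b_0^j) = b_0^j$ and $\sigma(a_0^j) = a_0^j$ for $1 \leq j \leq l$, while the inner $k - l$ levels of $\m{A}_S$ collapse to $\ut$.

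With $\sigma$ understood, the numerical bounds follow by block counting. For $b \in \m{A}_S$ write $A_b := \{ a \in A \mid \invv{a} = b \}$, and analogously $B_b$ for $b \in \m{B}_S$. By Lemma~\ref{l:skeleton} together with the explicit decomposition of $\m{A}$, we have $|A_{b_0^j}| = m_j + 1$, $|A_{a_0^j}| = n_j + 1$, and $|A_\ut| = p + 1$, with corresponding identities in $\m{B}$. For each $j \leq l$, rigidity of $\sigma$ together with surjectivity of $h$ forces $h$ to restrict to a surjection $A_{b_0^j} \to B_{b_0^j}$: indeed, if $b \in B_{b_0^j}$ and $b = h(a)$, then $b_0^j = \invv{b} = h(\invv{a})$, and rigidity of $\sigma$ forces $\invv{a} = b_0^j$, hence $a \in A_{b_0^j}$. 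This yields $r_j + 1 \leq m_j + 1$, so $r_j \leq m_j \leq \max_i m_i$, and symmetrically $s_j \leq \max_i n_i$.

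Finally, for the bound on $q$, observe that $h^{-1}(B_\ut) = A_\ut \cup \bigcup_{j \geq l+1} (A_{b_0^j} \cup A_{a_0^j})$. If $l = k$, then no skeleton element is collapsed and $B_\ut = h(A_\ut)$, giving $q \leq p$. If $l < k$, then order-preservation forces further collapse: every $x \in \go_p$ and every $x \in A_{b_0^j}$ with $j > l + 1$ satisfies $b_0^{l+1} < x \leq \ut$ in $\m{A}$, and since $h(b_0^{l+1}) = \ut$ this forces $h(x) = \ut$; similarly every $x \in A_{a_0^j}$ with $j \geq l+1$ satisfies $\ut \leq x \leq a_0^j$ with $h(a_0^j) = \ut$, so $h(x) = \ut$. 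Consequently $B_\ut = h(A_{b_0^{l+1}})$, giving $q \leq m_{l+1}$. In either case $q \leq \max(\{m_i\} \cup \{p\})$. The main obstacle is establishing the rigidity of $\sigma$, which rests on the classification of convex normal subalgebras of $\Nsum_{i=1}^k \Com{0}{0}$ above; once rigidity is in hand, the remaining bounds are straightforward cardinality and order-preservation arguments.
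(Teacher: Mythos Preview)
Your proof is correct. The paper itself gives no argument here, merely stating that the lemma ``follow[s] easily from the definitions of the respective classes,'' so there is nothing to compare against at the level of detail. Your route---restricting $h$ to the Sugihara skeletons, classifying the convex normal subalgebras of the odd Sugihara chain $\Nsum_{i=1}^k\Com{0}{0}$ (this uses that the skeleton is $^\star$-involutive, so Proposition~\ref{p:projective} applies and congruence classes outside the kernel are singletons), deducing rigidity of $\sigma$, and then reading off block-by-block cardinality bounds---is a natural way to make the claim precise. In fact your argument yields the sharper inequalities $r_j\le m_j$, $s_j\le n_j$ for $1\le j\le l$, and $q\le m_{l+1}$ (when $l<k$) or $q\le p$ (when $l=k$), from which the stated bounds follow immediately.
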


\begin{lemma}\label{l:HS-closure}\
\begin{enumerate}[label = \textup{(\roman*)}]
\item $\hm\sub(\Ee{p} ) = \Ee{p}$ for each $p \in \{0,1,\omega \}$.
\item $\hm\sub(\Fin{m}{p}{n}) = \Fin{m}{p}{n}$ and $\hm\sub( \Inf{m}{p}{n}) =  \Inf{m}{p}{n}$ for any $m,n,p \in \{0,1 ,\omega \}$ with $p\geq m$.
\item $\hm\sub(\Fin{m}{0}{n} \cup \Ee{p}) = \Fin{m}{0}{n} \cup \Ee{p}$ and $\hm\sub(\Inf{0}{0}{n} \cup \Ee{p}) = \Inf{0}{0}{n} \cup \Ee{p}$ for any $m,n,p \in \{0,1, \omega \}$ with $p\geq m$.
\end{enumerate}
\end{lemma}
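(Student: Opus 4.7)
My approach is to verify closure under $\hm$ and $\sub$ separately for each class, using the structural machinery for finite commutative idempotent residuated chains already assembled in the paper: Lemma~\ref{l:nsum-embed-equiv} controls embeddings (hence subalgebras) into nested sums, while Lemma~\ref{l:hom-image} bounds the parameters of homomorphic images. In almost every case these results apply directly; the only wrinkle, which I would handle by a short convex-normal-subalgebra computation, arises in~(iii).

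For~(i), every member of $\Ee{p}$ is isomorphic to some $\Go_q$ with $q \leq p$. Lemma~\ref{l:embed-equiv}(i) identifies its subalgebras as subchains containing $\ut$, each again of the form $\Go_{q'}$ with $q' \leq q$, while Lemma~\ref{l:hom-image} applied with $k = 0$ forces every homomorphic image to be some $\Go_{q'}$ with $q' \leq q$. For~(ii), closure under $\sub$ in both $\Fin{m}{p}{n}$ and $\Inf{m}{p}{n}$ is immediate from Lemma~\ref{l:nsum-embed-equiv}, since each component of the triple decomposing an embedding transports the bounds $r_i \leq m$, $s_i \leq n$, and $p' \leq p$ on the $\Com$- and $\Go$-summands to the subalgebra. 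Closure under $\hm$ follows from Lemma~\ref{l:hom-image}, which gives $l \leq k$, $r_j \leq m$, $s_j \leq n$, and $q \leq \max\{m, p\} = p$ for any homomorphic image; the hypothesis $p \geq m$ is used exactly here, since a homomorphism may collapse a $\Com$-summand to a pure Gödel chain of dimension as large as $m$.

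The main obstacle --- and the reason (iii) is phrased as a union --- is that $\Fin{m}{0}{n}$ is not itself closed under $\hm$, since a non-trivial $\Com{r}{s}$ has Gödel-chain quotients $\Go_k$ with $1 \leq k \leq r$. To address this, I would compute the convex normal subalgebras of $\Com{r}{s}$ directly, observing that they are exactly $\{\ut\}$ and $\Com{j}{s}$ for $j \in \{0, \ldots, r\}$, with respective quotients $\Com{r}{s}$ and $\Go_{r-j}$; since $r \leq m \leq p$, each such $\Go_{r-j}$ lies in $\Ee{p}$. The second claim of~(iii) is easier: the convex normal subalgebras of $\Nsum_{i=1}^k \Com{0}{s_i}$ are $\{\ut\}$ together with the inner tails $\Nsum_{i=i_0}^k \Com{0}{s_i}$, whose quotients $\Nsum_{i=1}^{i_0-1} \Com{0}{s_i}$ remain in $\Inf{0}{0}{n}$, so the union with $\Ee{p}$ absorbs only images and subalgebras of members of $\Ee{p}$ via~(i). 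In both parts of~(iii), $\sub$-closure follows from Lemma~\ref{l:nsum-embed-equiv} as in~(ii).
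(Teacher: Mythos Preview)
Your proposal is correct and follows essentially the same route the paper intends: the paper gives no explicit proof of this lemma, stating only that it ``follows easily from the definitions of the respective classes'' (together with the preceding Lemma~\ref{l:hom-image}). Your use of Lemmas~\ref{l:embed-equiv}, \ref{l:nsum-embed-equiv}, and~\ref{l:hom-image} for parts~(i) and~(ii) is exactly the intended unpacking.

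A small remark on~(iii): your convex-normal-subalgebra computation for $\Com{r}{s}$ is correct and does the job, and some argument beyond the raw bounds of Lemma~\ref{l:hom-image} is genuinely needed here, since that lemma alone permits an image of the form $\Com{r'}{s'}\nsum\Go_q$ with $q>0$, which would fall outside $\Fin{m}{0}{n}\cup\Ee{p}$. Your direct computation rules this out. For the second claim of~(iii), however, the extra computation is unnecessary: with $m=0$, Lemma~\ref{l:hom-image} already forces $q\leq\max\{0,0\}=0$, so $\Inf{0}{0}{n}$ is closed under $\hm\sub$ by part~(ii), and the union with $\Ee{p}$ is then closed by~(i). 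This is a minor simplification, not a correction.
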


We aim to characterize the varieties of commutative idempotent semilinear residuated lattices that have the amalgamation property. To do so, we will often argue that if $\V$ is any variety of commutative idempotent semilinear residuated lattices with the amalgamation property and $\V$ contains certain algebras, then $\V$ must also contain certain other algebras.  As a basis for this approach, we recall the following result.

\begin{lemma}[{\cite[Lemma~6.5]{GilJipMet2020}}]\label{l:AP-com-chain}
The class of finite commutative idempotent residuated chains has the amalgamation property.
\end{lemma}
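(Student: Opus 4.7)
The plan is to exploit the structural description of finite commutative idempotent residuated chains as unique nested sums $(\Nsum_{i=1}^k \Com{m_i}{n_i}) \nsum \Go_p$ (established in the proposition at the end of Section~\ref{s:IRL}) together with the component-wise description of embeddings given by Lemma~\ref{l:nsum-embed-equiv}. Given a span $(\m{A},\m{B},\m{C},i_B,i_C)$ in the class, write
$$
\m{A} = (\Nsum_{i=1}^{k}\Com{m_i}{n_i})\nsum \Go_p,\quad \m{B} = (\Nsum_{j=1}^{l}\Com{r_j}{s_j})\nsum \Go_q,\quad \m{C} = (\Nsum_{j=1}^{l'}\Com{r'_j}{s'_j})\nsum \Go_{q'},
$$
and decompose the embeddings $i_B, i_C$ into triples $(f_B,g_B,\{h_{B,i}\}_{i=1}^k)$ and $(f_C,g_C,\{h_{C,i}\}_{i=1}^k)$ of order-embeddings, Stone-algebra embeddings, and embeddings of the $\Com{m_i}{n_i}$-components. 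Amalgamating $\m{A}$ into $\m{B}$ and $\m{C}$ then reduces to independently solving three simpler amalgamation problems and reassembling the solutions via Proposition~\ref{p:sum-map}.

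Each of the three subproblems admits a direct construction. First, since $f_B$ and $f_C$ are order-embeddings of finite chains with distinguished top, their pushout in the category of such chains is trivial: identify the images of $\{1,\ldots,k\}$ and choose any total order on the disjoint union of the remaining indices of $\{1,\ldots,l\}$ and $\{1,\ldots,l'\}$, producing a chain $\{1,\ldots,T\}$ with order-embeddings $f'_B, f'_C$ such that $f'_B\circ f_B = f'_C\circ f_C$. Second, the $\Go_p$-components give a span in the class of finite relative Stone algebras. Because, by Lemma~\ref{l:embed-equiv}(i), embeddings between such algebras are just injective order-preserving maps fixing $\ut$, the amalgam can be taken to be $\Go_P$ where $P = p + (q-p) + (q'-p)$, with the obvious order-preserving inclusions. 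Third, for each $i \in \{1,\ldots,k\}$ one must amalgamate the span $\Com{m_i}{n_i}\to \Com{r_{f_B(i)}}{s_{f_B(i)}}$, $\Com{m_i}{n_i}\to \Com{r'_{f_C(i)}}{s'_{f_C(i)}}$; by Lemma~\ref{l:embed-equiv}(ii), this reduces to a pair of amalgamations of finite linear orders with fixed top and bottom, solved by gluing along the common image of $\Com{m_i}{n_i}$ to produce some $\Com{M_i}{N_i}$. The remaining indices $t \in \{1,\ldots,T\}$ not in the image of either $f'_B f_B = f'_C f_C$ are filled in by the corresponding component of $\m{B}$ or $\m{C}$.

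Combining these ingredients, set $\m{D} = (\Nsum_{t=1}^{T}\Com{M_t}{N_t})\nsum \Go_P$ with the natural embeddings on each component. Applying Lemma~\ref{l:nsum-embed-equiv} in the reverse direction, the triples of data assemble into embeddings $j_B \colon \m{B}\to \m{D}$ and $j_C\colon \m{C}\to \m{D}$, and the equalities $f'_B f_B = f'_C f_C$, $g'_B g_B = g'_C g_C$, and $h'_{B,f_B(i)} h_{B,i} = h'_{C,f_C(i)} h_{C,i}$ at the component level immediately yield $j_B i_B = j_C i_C$. Since all the ingredients are finite, so is $\m{D}$, and $\m{D}$ is a commutative idempotent residuated chain by Lemma~\ref{l:starinvnested}(ii) applied in the commutative setting.

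The main obstacle is the bookkeeping at the level of index chains: one must arrange the order-embedding pushout so that it is simultaneously compatible with the choice of amalgams at each component, and in particular so that the ``leftover'' indices $j \notin \mathrm{im}(f_B)$ and $j \notin \mathrm{im}(f_C)$ are placed consistently with the original orders on $\{1,\ldots,l\}$ and $\{1,\ldots,l'\}$. This is not logically deep but requires care, since a careless interleaving could violate monotonicity of $f'_B$ or $f'_C$; the fix is to process the index chains from top to bottom, greedily inserting leftover indices between consecutive identified positions while preserving both original orders.
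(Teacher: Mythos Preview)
The paper does not give its own proof of this lemma: it is quoted verbatim from \cite[Lemma~6.5]{GilJipMet2020} and used as a black box. So there is no in-paper argument to compare against directly.

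Your proposed argument is correct and is, in fact, exactly the strategy the paper uses later to establish amalgamation for the specific subclasses in Lemma~\ref{l:com-amalg-vars}: decompose each chain via the nested-sum normal form, reduce embeddings to triples via Lemma~\ref{l:nsum-embed-equiv}, amalgamate the index chains, the $\Go$-parts, and the $\Com{}{}$-parts separately (cf.\ Lemma~\ref{l:amalg-for-components}), and reassemble. So while the paper outsources the proof of Lemma~\ref{l:AP-com-chain}, your proposal is morally the same as the techniques the paper deploys immediately afterwards for the special cases, just applied uniformly with $m=n=p=\omega$.

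Two minor corrections. First, your appeal to Lemma~\ref{l:starinvnested}(ii) at the end is misplaced: that lemma concerns $^\star$-involutive chains, whereas $\Go_p$ is not $^\star$-involutive for $p\ge 1$. What you need is simply that a nested sum of commutative idempotent residuated chains is again a commutative idempotent residuated chain, which follows directly from the definition of the nested sum (and is implicit throughout Section~\ref{s:IRL}). Second, in the $\Com{}{}$-component amalgamation, Lemma~\ref{l:embed-equiv}(ii) fixes $a_0$, $b_0$, and $\ut$, so on the negative side $\{b_m<\cdots<b_0\}$ only the top $b_0$ is pinned, not both endpoints; this is still a trivial chain amalgamation, but your phrase ``fixed top and bottom'' is slightly off there.
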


We will sometimes abbreviate a span $\tuple{\m{A},\m{B},\m{C},i_B,i_B}$ by $\tuple{i_B,i_C}$, or, more explicitly, $\tuple{i_B\colon\m{A}\to\m{B},i_C\colon\m{A}\to\m{C}}$.

\begin{lemma}\label{l:amalg-for-components}\
\begin{enumerate}[label = \textup{(\roman*)}]
\item For $p \in \{0, 1, \omega\}$ every span $\tuple{i_1 \colon \Go_{q_1} \to \Go_{q_2}, i_2\colon \Go_{q_1} \to \Go_{q_3}}$ in $\Ee{p}$ has an amalgam in $\Ee{p}$.
\item For $m,n, p \in \{ 0,1, \omega \}$ every span $\tuple{i_1 \colon \Com{r_1}{s_1} \to  \Com{r_2}{s_2},i_2 \colon \Com{r_1}{s_1} \to  \Com{r_3}{s_3} }$ in $\Fin{m}{p}{n}$ has an amalgam in $\Fin{m}{p}{n}$.
\end{enumerate}
\end{lemma}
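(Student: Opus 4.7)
My plan for both parts is to construct explicit amalgams via a chain-pushout construction, relying on the restrictive form of embeddings between relative Stone chains and between algebras of the form $\Com{r}{s}$ recorded in Lemma~\ref{l:embed-equiv}. The size constraints $p$, $m$, and $n$ are then verified either by a direct pushout count when the bound is $\omega$, or by a short case analysis when the bound lies in $\{0,1\}$.

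For (i), Lemma~\ref{l:embed-equiv}(i) says that every embedding between relative Stone chains is simply an injective, order-preserving map fixing $\ut$. Writing $\go_{q_k} = \{c_{q_k}^{(k)} < \cdots < c_1^{(k)} < \ut\}$ for $k = 1, 2, 3$, I would build the amalgam as the chain pushout: glue $\go_{q_2}$ and $\go_{q_3}$ along the identifications induced by $i_1$ and $i_2$, and extend the resulting partial order to any total order on the quotient. This yields a chain isomorphic to $\Go_{q_2 + q_3 - q_1}$ together with canonical inclusion embeddings $j_1, j_2$ satisfying $j_1 i_1 = j_2 i_2$. When $p = \omega$ the amalgam lies directly in $\Ee{\omega}$. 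When $p = 1$, all $q_i \leq 1$, so either $q_1 = 1$ (forcing $q_2 = q_3 = 1$, with amalgam $\Go_1$ and $j_k$ the identity) or $q_1 = 0$ (with amalgam $\Go_{\max(q_2, q_3)}$, obtained from the pushout by identifying the extra element in the case $q_2 = q_3 = 1$). The case $p = 0$ is trivial.

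For (ii), Lemma~\ref{l:embed-equiv}(ii) forces every embedding $i_k \colon \Com{r_1}{s_1} \to \Com{r_k}{s_k}$ to fix the Sugihara skeleton elements $b_0, \ut, a_0$. Such an embedding therefore decomposes into an order-embedding of the negative tail $\{b_{r_1} < \cdots < b_0\}$ into $\{b_{r_k} < \cdots < b_0\}$ fixing $b_0$, together with an order-embedding of the positive tail $\{a_{s_1} < \cdots < a_0\}$ into $\{a_{s_k} < \cdots < a_0\}$ fixing $a_0$. The two resulting amalgamation problems are independent, and each is formally identical to the situation of (i). Solving them yields sizes $r$ and $s$ with $r \leq m$ and $s \leq n$, and reassembling the amalgamated tails with the common Sugihara skeleton produces an algebra $\Com{r}{s}$. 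Since $\Com{r}{s} \cong \Com{r}{s} \nsum \Go_0$ and $0 \leq p$, this algebra lies in $\Fin{m}{p}{n}$. The commutation condition $j_2 i_2 = j_3 i_3$ is forced on the skeleton and holds on each tail by the construction, and the maps $j_k$ are genuine embeddings by Lemma~\ref{l:embed-equiv}(ii) (equivalently, by Lemma~\ref{l:nsum-embed-equiv} if one prefers to package the three components).

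The main subtlety arises for the finite parameter values in $\{0,1\}$: the naive pushout count $q_2 + q_3 - q_1$ can exceed $1$ (as when $q_1 = 0$ and $q_2 = q_3 = 1$), so the literal pushout does not always suffice when $p = 1$. However, the restriction $q_i \leq p$ (and similarly $r_i \leq m$, $s_i \leq n$) leaves only a handful of configurations when $p$, $m$, or $n$ is finite, and in each case an amalgam of the required size is obtained by identifying surplus elements of the pushout. The verification is routine once the pushout construction is in hand, and no essentially new ideas beyond the chain pushout and a short case analysis are required.
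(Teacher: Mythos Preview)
Your proposal is correct and takes a somewhat different, more elementary route than the paper. For the $p=\omega$ case of (i) and the $m=n=\omega$ case of (ii), the paper invokes Lemma~\ref{l:AP-com-chain} (the amalgamation property for all finite commutative idempotent residuated chains) as a black box and then projects the resulting amalgam onto the relevant $\Go$- or $\Com{}{}$-component via Lemma~\ref{l:nsum-embed-equiv}; for the mixed cases of (ii) (e.g., $m\le 1$, $n=\omega$) the paper does essentially your chain-pushout on the unbounded tail while handling the bounded tail by inclusion into $\Com{m}{n}$. Your argument instead carries out the chain pushout uniformly in every case and deals with the finite parameter values $0,1$ by the small case analysis you describe. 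This buys you self-containment---you never need Lemma~\ref{l:AP-com-chain}---at the cost of spelling out the pushout and the identification step when the naive size $q_2+q_3-q_1$ overshoots the bound. The paper's route is shorter given the cited lemma, but your approach is arguably cleaner conceptually, since it makes explicit that the only content here is amalgamation of finite chains with a fixed top (or fixed endpoints in the $\Com{}{}$ case), which is trivial.
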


\begin{proof}
(i) If $p= 0$, then the claim is trivial. 

If $p= 1$,  up to isomorphism $\Ee{p}$ consists of the algebras $\Go_0$ and $\Go_1$, and  the unique existing embeddings are  $ \Go_0  \hookrightarrow  \Go_0$,  $ \Go_0  \hookrightarrow  \Go_1$,  $ \Go_1  \hookrightarrow  \Go_1$. So the claim follows. 

Finally suppose $p=\omega$. Then, by Lemma~\ref{l:AP-com-chain}, the span $\tuple{i_1,i_2}$ has an amalgam $(\Nsum_{i=1}^k \Com{m_i}{n_i}) \nsum \Go_{q_4}, j_1,j_2)$ in the class of finite commutative idempotent residuated chains. Hence, by Lemma~\ref{l:nsum-embed-equiv}, $j_1[\go_{q_2}] \subseteq \go_{q_4}$ and $j_2[\go_{q_3}] \subseteq \go_{q_4}$, so $(\Go_{q_4}, j_1,j_2)$ is an amalgam of the span $\tuple{i_1,i_2}$ in $\Ee{p}$.

(ii) If $m\leq 1$ and $n\leq 1$, then $r_1,r_2,r_3 \leq 1$ and $s_1,s_2,s_3 \leq 1$. Then $\Com{m}{n}$ together with the inclusion maps $j_1 \colon \Com{r_2}{s_2} \hookrightarrow \Com{m}{n}$, $j_2 \colon \Com{r_3}{s_3} \hookrightarrow \Com{m}{n}$  is an amalgam of the span $\tuple{i_1,i_2}$ in $\Fin{m}{p}{n}$.

If $m  \leq 1$ and $n = \omega$, define $f_1 = i_1\restriction_{[\ut,a_0]} \colon [\ut,a_0] \to [\ut,a_0] \subseteq \com{r_2}{s_2}$ and $f_2 = i_2\restriction_{[\ut,a_0]} \colon [\ut,a_0] \to [\ut,a_0]  \subseteq \com{r_3}{s_3}$. Let $\tuple{\{\ut < a_{k} < \dots < a_{0} \},g_1,g_2}$ be an amalgam of $\tuple{f_1,f_2}$ in the class of finite chains. Now define the maps $j_1 \colon \Com{r_2}{s_2} \to \Com{m}{k}$, $j_1(b_i) = b_i$, $j_1(a_i) = f_1(a_i)$ and  $j_2 \colon \Com{r_3}{s_3} \to \Com{m}{k}$, $j_2(b_i) = b_i$,  $j_2(a_i) = f_2(a_i)$. By Lemma~\ref{l:embed-equiv}, these maps are embeddings. Thus  $\tuple{\Com{m}{k},j_1,j_2}$ is an amalgam of the span $\tuple{i_1,i_2}$ in $\Fin{m}{p}{n}$.

The case where $m = \omega$  and $n\leq 1$ is very similar.

Finally if $m = n = \omega$, then by Lemma~\ref{l:AP-com-chain}, the span $
\tuple{i_1,i_2}$ has an amalgam $(\Nsum_{j=1}^k \Com{m_j}{n_j}) \nsum \Go_{q}, j_1,j_2)$ in the class of finite commutative idempotent residuated chains. Hence, by Lemma~\ref{l:nsum-embed-equiv}, there is a $1\leq j \leq k$ such that $\tuple{\Com{m_j}{n_j},j_1, j_2}$ is an amalgam of the span $\tuple{i_1,i_2}$ in $\Fin{m}{p}{n}$.
\end{proof}

\begin{lemma}\label{l:com-amalg-vars}
The following varieties have the amalgamation property:
\begin{enumerate}[label = \textup{(\roman*)}]
\item $\vr(\Ee{p})$ for any $p \in \{0,1,\omega\}$; 
\item $\vr(\Fin{m}{p}{n})$ for any $m,n,p \in \{ 0,1, \omega \}$ with $p \geq m$;
\item $\vr(\Inf{m}{p}{n})$ for any $m,n,p \in \{ 0,1, \omega \}$ with $p \geq m$;
\item $\vr(\Fin{m}{0}{n} \cup \Ee{p})$ for any $m,n \in \{0,1,\omega \}$, $p\in \{1,\omega \}$ with  $p \geq m$;
\item $\vr(\Inf{0}{0}{n} \cup \Ee{p})$ for any $n \in \{0,1, \omega \}$, $p\in \{1,\omega \}$.
\end{enumerate}
\end{lemma}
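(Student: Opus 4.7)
The proof of all five parts proceeds by a common strategy: reduce to the one-sided amalgamation property of the generating class, and then amalgamate component by component. Let $\K$ denote the generating class in the case at hand and set $\V = \vr(\K)$. Since $\V$ is contained in the locally finite variety of commutative idempotent semilinear residuated lattices (\cite[Corollary~3.6]{GilJipMet2020}), Lemma~\ref{l:ap-1ap-chains} reduces the amalgamation property of $\V$ to the one-sided amalgamation property of $\Vfc$. The hypotheses on $m,n,p$ (crucially $p \geq m$ in parts (ii)--(iv), and automatic in (i) and (v)) ensure via Lemma~\ref{l:HS-closure} that $\K$ is closed under $\hm\sub$. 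Since $\V$ is congruence distributive, Jónsson's lemma gives that every subdirectly irreducible member of $\V$ lies in $\hspu(\K)$; as every finite totally ordered residuated lattice is subdirectly irreducible, Lemma~\ref{l:GraQua} yields $\Vfc \subseteq \hspu(\K)_{\mathrm{fin}} = \K$, and the reverse inclusion is trivial. It thus remains to verify that each $\K$ has the one-sided amalgamation property; part (i) is immediate from Lemma~\ref{l:amalg-for-components}(i).

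For the remaining parts, fix a span $\tuple{i_B\colon \m{A} \to \m{B}, i_C\colon \m{A} \to \m{C}}$ in $\K$ and apply Lemma~\ref{l:comm-decomp} to write each of $\m{A}, \m{B}, \m{C}$ in the form $(\Nsum_{i=1}^{k}\Com{m_i}{n_i}) \nsum \Go_p$. Lemma~\ref{l:nsum-embed-equiv} decomposes $i_B$ and $i_C$ into an order-embedding of the index chains, an embedding of the G\"odel tops, and, for each index in the image, an embedding of the corresponding paired $\Com$-summands. I amalgamate these ingredients level-by-level: the index chains by interleaving the ``unmatched'' positions of $\m{B}$ and $\m{C}$ on either side of the image of $\m{A}$'s index chain (finite chains trivially amalgamate); the G\"odel tops via Lemma~\ref{l:amalg-for-components}(i); and each matched pair of $\Com$-summands via Lemma~\ref{l:amalg-for-components}(ii). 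The unmatched $\Com$-summands of $\m{B}$ and $\m{C}$ are carried over unchanged, slotted into the interleaved index. Proposition~\ref{p:sum-map} then reassembles these pieces into a single nested sum, and the explicit bounds in Lemma~\ref{l:amalg-for-components} (together with $p \geq m$) keep the result within $\K$, settling parts (ii) and (iii).

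For (iv) and (v), a new configuration occurs when $\m{B}$ is $\Com$-containing and $\m{C}$ is a pure G\"odel algebra (or vice versa). A direct computation with $\lgal{x} = x \to \ut$ shows that no nontrivial pure-G\"odel algebra embeds into a $\Com$-containing member of $\K$ in these parts: for any $c < \ut$ in a G\"odel chain, $c^\star = \ut$, whereas every $b < \ut$ in a $\Com{r}{s}$-summand satisfies $b^\star = a_0 \neq \ut$, and in (iv) and (v) the $\Com$-containing algebras have no nontrivial G\"odel top in which such a $c$ could land. Hence $\m{A}$ is forced to be trivial, and a one-sided amalgam is obtained by setting $\m{D} = \m{B}$, letting $j_B$ be the identity embedding, and taking $j_C$ to be the unique homomorphism collapsing $\m{C}$ onto the trivial subalgebra $\{\ut\}$; commutativity of the square is automatic because $\m{A}$ is one-element. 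All other spans in (iv) and (v) fall under the construction of the previous paragraph.

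The main technical obstacle is to control which class the reassembled amalgam belongs to. This is ensured jointly by the sharp bounds recorded in Lemma~\ref{l:amalg-for-components}, which keep the $\Com$-amalgams within $\Fin{m}{p}{n}$ and the G\"odel-top amalgam within $\Ee{p}$, and by the hypothesis $p \geq m$, which simultaneously secures the $\hm\sub$-closure of $\K$ used to identify $\Vfc$ with $\K$ and guarantees that the G\"odel contributions absorbed during amalgamation do not exceed the $\Ee{p}$ bound.
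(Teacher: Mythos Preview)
Your reduction to the one-sided amalgamation property of $\K = \Vfc$ is correct and matches the paper's approach, and your treatment of parts (i), (iii), and (v) is essentially the paper's argument. However, your componentwise amalgamation has a gap for part (ii), and hence also for the $\Fin{m}{0}{n}$-spans in part (iv): when $\m{A}$ is a pure G\"odel chain but $\m{B}$ and $\m{C}$ each carry a $\Com$-summand, your interleaving of index chains produces an amalgam with \emph{two} $\Com$-summands, which lies in $\Inf{m}{p}{n}$ but not in $\Fin{m}{p}{n}$. The bounds in Lemma~\ref{l:amalg-for-components} control only the parameters of individual summands, not their number, so your claim that ``the explicit bounds \ldots keep the result within $\K$'' fails precisely here.

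The paper repairs this by treating the case $\m{A} = \Go_{q_1}$ separately: rather than carrying the $\Com$-summands of $\m{B}$ and $\m{C}$ over as unmatched pieces, it amalgamates them over their common subalgebra $\Com{0}{0}$ using Lemma~\ref{l:amalg-for-components}(ii), producing a \emph{single} $\Com$-summand $\Com{r_4}{s_4}$. The G\"odel tops are amalgamated as in your argument, and Lemma~\ref{l:nsum-embed-equiv} reassembles the result inside $\Fin{m}{p}{n}$. Your handling of the genuinely mixed spans in (iv) and (v) via the one-sided collapse onto $\{\ut\}$ is correct and coincides with the paper's.
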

\begin{proof}
Let $\K$ be one of the generating sets of finite chains from (i) to (v). Then $\hm\sub(\K) = \K$, by Lemma~\ref{l:HS-closure}, and $(\hspu(\K))_{\text{fin}} = \K$, by Lemma~\ref{l:GraQua}. Hence $\K$ consists of the finite chains of $\vr(\K)$, and, by Lemma~\ref{l:ap-1ap-chains}, it suffices in each case to show that every span in $\K$ has a one-sided amalgam in~$\K$.

(i) Immediate from Lemma~\ref{l:amalg-for-components}(i).

(ii) Let $m,n,p \in \{ 0,1, \omega \}$ with $p \geq m$ and let $\tuple{i_1\colon {\m A} \to {\m B}, i_2 \colon {\m A} \to {\m C}}$ be a span in $\Fin{m}{p}{n}$. Since $\Ee{p}$ has the amalgamation property, we can assume that ${\m B}, {\m C} \notin \Ee{p}$, i.e., ${\m B} = \Com{r_2}{s_2}\nsum \Go_{q_2}$  and ${\m C} = \Com{r_3}{s_3} \nsum \Go_{q_3}$. with $r_2,r_3\leq m$, $s_2,s_3\leq n$, $q_1\leq p$. If ${\m A} = \Go_{q_1}$, then, since $\Ee{p}$ has the amalgamation property, the span $\tuple{i_1\colon \Go_{q_1} \to \Go_{q_2}, i_2 \colon \Go_{q_1} \to \Go_{q_3}}$ has an amalgam $\tuple{\Go_{q_4},f_1,f_2}$. Moreover, by Lemma~\ref{l:amalg-for-components}, the span $\tuple{g_1 \colon \Com{0}{0} \hookrightarrow  \Com{r_2}{s_2},g_2 \colon \Com{0}{0} \hookrightarrow  \Com{r_3}{s_3}}$ has an amalgam $\tuple{\Com{r_4}{s_4},h_1,h_2}$ in $\Fin{m}{p}{n}$. Now, by Lemma~\ref{l:nsum-embed-equiv}, the maps  $j_1 \colon B \to \com{r_4}{s_4} \nsum \go_{q_4}$, $j_1(x) = f_1(x)$ for $x\in \go_{q_2}$, $j_1(x) = h_1(x)$ for $x\in \com{r_2}{s_2}$,  $j_2 \colon B \to \com{r_4}{s_4} \nsum \go_{q_4}$, $j_2(x) = f_2(x)$ for $x\in \go_{q_3}$, $j_2(x) = h_2(x)$ for $x\in \com{r_3}{s_3}$ are embeddings and hence $\tuple{\Com{r_4}{s_4} \nsum \Go_{q_4}, j_1,j_2}$ is an amalgam of the span $\tuple{i_1,i_2}$ in $ \Fin{m}{p}{n}$.

If ${\m A} = \Com{r_1}{s_1} \nsum \Go_{q_1}$, then, since $\Ee{p}$ has the amalgamation property, the span $\tuple{i_1\restriction_{\go_{q_1}}\colon \Go_{q_1} \to \Go_{q_2}, i_2\restriction_{\go_{q_1}} \colon \Go_{q_1} \to \Go_{q_3}}$ has an amalgam $\tuple{\Go_{q_4},f_1,f_2}$. Moreover, by Lemma~\ref{l:amalg-for-components}, the span $\tuple{i_1\restriction_{\com{r_1}{s_1}} \colon \Com{r_1}{s_1} \to  \Com{r_2}{s_2},i_2\restriction_{\com{r_1}{s_1}} \colon \Com{r_1}{s_1} \to  \Com{r_3}{s_3}}$ has an amalgam $\tuple{\Com{r_4}{s_4},g_1,g_2}$ in $\Fin{m}{p}{n}$. Now, by Lemma~\ref{l:nsum-embed-equiv}, the maps  $j_1 \colon B \to \com{r_4}{s_4} \nsum \go_{q_4}$, $j_1(x) = f_1(x)$ for $x\in \go_{q_2}$, $j_1(x) = g_1(x)$ for $x\in \com{r_2}{s_2}$,  $j_2 \colon B \to \com{r_4}{s_4} \nsum \go_{q_4}$, $j_2(x) = f_2(x)$ for $x\in \go_{q_3}$, $j_2(x) = g_2(x)$ for $x\in \com{r_3}{s_3}$ are embeddings and hence $\tuple{\Com{r_4}{s_4} \nsum \Go_{q_4}, j_1,j_2}$ is an amalgam of the span $\tuple{i_1,i_2}$ in $ \Fin{m}{p}{n}$.

(iii) Very similar to part (ii) by first amalgamating the summands and the index sets and then using Lemma~\ref{l:nsum-embed-equiv}.

(iv) and (v) Let $m,n \in \{0,1, \omega \}$, $p\in \{1,\omega \}$.
The only spans that are not covered by parts (ii) and (iii) are  of the form $\tuple{i_1 \colon \Go_0 \to \Nsum_{i=1}^k\Com{r_i}{s_i}, i_2\colon \Go_0 \to \Go_p}$ and $\tuple{i_1'\colon \Go_0 \to \Go_p, i_2' \colon \Go_0 \to \Nsum_{i=1}^k\Com{r_i}{s_i}}$. For these define the one-sided amalgam $\tuple{\Nsum_{i=1}^k\Com{r_i}{s_i},j_1,j_2}$ with $j_1\colon  \Nsum_{i=1}^k\Com{r_i}{s_i} \to \Nsum_{i=1}^k\Com{r_i}{s_i}$, $j_1(x) = x$, $j_2 \colon \Go_p \to \Nsum_{i=1}^k\Com{r_i}{s_i}$, $j_2(x) = \ut$, and the one-sided amalgam $\tuple{\Go_p,f_1,f_2}$ with $f_1\colon  \Go_p \to \Go_p$, $f_1(x) = x$, $f_2 \colon \Nsum_{i=1}^k\Com{r_i}{s_i} \to  \Go_p$, $f_2(x) = \ut$, respectively.
\end{proof}

\begin{lemma}\label{l:contract-skeleton}
Let $\V$ be a variety of commutative idempotent semilinear residuated lattices and $\m{A} \in \Vfc$. If $\m{A} \nsum \Com{m}{n} \in \Vfc$, then $\m{A} \nsum \Go_m \in \Vfc$.
\end{lemma}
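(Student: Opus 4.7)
The plan is to exhibit $\m{A} \nsum \Go_m$ as a homomorphic image of $\m{A} \nsum \Com{m}{n}$; since $\V$ is closed under $\hm$ and $\m{A} \nsum \Go_m$ is finite and totally ordered, this forces $\m{A} \nsum \Go_m \in \Vfc$. The natural map to consider is $\phi \colon \m{A} \nsum \Com{m}{n} \to \m{A} \nsum \Go_m$ that acts as the identity on $\m{A}$, sends $a_i$ and $b_0$ to $\ut$ for each $0 \leq i \leq n$, and sends $b_j$ to $c_j$ for each $1 \leq j \leq m$. Equivalently, $\phi$ is the quotient map associated with the subalgebra whose universe is $C := \{b_0, \ut, a_n, \dots, a_0\}$.

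The first step is to verify that $\m{C}$ is a convex normal subalgebra of $\m{A} \nsum \Com{m}{n}$. Convexity is immediate from the nested sum definition, which places every negative element of $\m{A}$ strictly below $b_m$ and every positive element strictly above $a_0$, so $C$ is an interval in the chain. Closure under the residuated lattice operations reduces to computations inside $\Com{m}{n}$, combined with the cross-component identity $a \pd x = x \pd a = a$ for $a \in \m{A}$ and $x \in \Com{m}{n}$ that follows from $\m A$ being the lower-index component. Normality then collapses, for $a \in \m{A}$, to the calculation $a \ld xa \mt \ut = a \ld a \mt \ut = a^\star \mt \ut = \ut \in C$ obtained from Lemma~\ref{l:idempotentops}; for $a \in \Com{m}{n}$ it is a direct verification from the multiplication table.

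The second and more delicate step is to identify the resulting quotient with $\m{A} \nsum \Go_m$, equivalently to verify that $\phi$ as defined above is a residuated lattice homomorphism. Preservation of $\pd$, $\mt$, $\jn$, and $\ut$ is straightforward from the nested sum description and the explicit definition of $\phi$. The main obstacle is preservation of the residuals in the mixed cases $a \ld x$ and $x \ld a$ for $a \in \m{A}$ and $x \in \Com{m}{n}$; these are verified by applying Lemma~\ref{l:idempotentops} together with a careful analysis of $a^\star$ as computed inside the nested sum. A subcase split is required depending on whether $a^\star_{\m{A}} = \ut$: when this equality holds, the star in $\m{A} \nsum \Com{m}{n}$ reaches into the innermost component and equals $a_0$, while the corresponding value in $\m{A} \nsum \Go_m$ is $\ut$, and $\phi$ identifies these two values. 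Once $\phi$ is shown to be a homomorphism, the conclusion $\m{A} \nsum \Go_m \in \Vfc$ follows by closure of $\V$ under homomorphic images.
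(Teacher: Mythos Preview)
Your approach is essentially the paper's: both exhibit $\m A \nsum \Go_m$ as the quotient of $\m A \nsum \Com{m}{n}$ by the congruence associated with the convex normal subalgebra $[b_0,a_0]$, the paper by computing that congruence explicitly (showing it is trivial outside $[b_0,a_0]$) and you by verifying directly that the explicit map $\phi$ is a homomorphism. One small remark: the subcase $a^\star_{\m A} = \ut$ with $a \neq \ut$ that you single out is vacuous, since the hypothesis that $\m A \nsum \Com{m}{n}$ is a well-defined residuated chain already forces $\m A$ to be admissible, and by the definition of $\nsum$ the value of $a^\star$ in the nested sum is always $a^\star_{\m A}$ (it never ``reaches into'' the inner component to become $a_0$); so that part of your case analysis can simply be dropped.
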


\begin{proof}
Clearly, every convex normal subalgebra of a $\Com{m}{n}$ is also a convex normal subalgebra of $\m{A} \nsum \Com{m}{n}$ by the definition of the nested sum. It is easy to see that the interval $[b_0,a_0]$ is a convex normal subalgebra of $\Com{m}{n}$ and, by direct computation, that the congruence $\Theta$ corresponding to this convex normal subalgebra satisfies $(x,y)\in\Theta$ if and only if $x=y$ or $b_0\leq x,y$, for any $x,y\in \Com{m}{n}$. Now if $x,y\in \m{A} \nsum \Com{m}{n}$ with $(x,y)\in\Theta$, then $b_0\leq x\to y \mt y\to x\leq a_0$. It follows from the definition of the nested sum that $x\to y\notin\Com{m}{n}$ if  $x\notin\Com{m}{n}$ or $y\notin\Com{m}{n}$, so also $(x,y)\in\Theta$ if and only if $x=y$ or $b_0\leq x,y\leq a_0$ for any $x,y\in \m{A} \nsum \Com{m}{n}$. Hence we obtain $\m{A} \nsum \Go_m\cong(\m{A} \nsum \Com{m}{n})/\Theta\in\Vfc$.
\end{proof}

\begin{lemma}\label{l:AP-closure}
Let $\V$ be a variety of commutative idempotent semilinear residuated lattices, let $\m{A}, \m{B} \in \Vfc$, and suppose that $\V$ has the amalgamation property.
\begin{enumerate}[label = \textup{(\roman*)}]
\item If $\m{A} \nsum \Go_2 \in \Vfc$, then $\m{A} \nsum \Go_n \in \Vfc$ for each $n\geq 1$.
\item If $\Com{0}{0} \nsum \Com{0}{0} \nsum \m{B} \in \Vfc$, then $(\Nsum_{i=1}^k \Com{0}{0} )\nsum \m{B} \in \Vfc$ for every $k\geq 1$.
\item If $\Com{2}{n} \in \Vfc$ for  some $n\in \N$, then  $\Com{m}{n},\Go_m \in \Vfc$ for every $m\in \N$.
\item If $\Com{m}{2} \in \Vfc$ for $m\in \N$, then $\Com{m}{n} \in \Vfc$ for every $n\in \N$.
\item If $\Com{m}{0}, \Com{0}{n} \in \Vfc$ for $m,n \in \Vfc$, then $\Com{m}{n}\in \Vfc$.
\item If $\m{A} \nsum \Com{0}{0} \nsum \m{B} \in \Vfc$ and $\Com{m}{n} \in \Vfc$, then $\m{A} \nsum \Com{m}{n} \nsum \m{B} \in \Vfc$.
\item If $\m{A} \nsum \Go_1 \in \Vfc$ and $\Go_p \in \Vfc$, then $\m{A} \nsum \Go_p \in \Vfc$.
\end{enumerate}
\end{lemma}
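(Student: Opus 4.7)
All seven clauses share a common strategy: construct a span in $\Vfc$, apply the amalgamation property of $\V$ to obtain an amalgam which, by local finiteness together with Lemma~\ref{l:ap-1ap-chains}, may be assumed to lie in $\Vfc$, and then use Lemma~\ref{l:nsum-embed-equiv} together with Lemma~\ref{l:embed-equiv} to read off that the target algebra is a subalgebra of the amalgam (or, via Lemma~\ref{l:contract-skeleton}, a homomorphic image of something already shown to lie in $\Vfc$). The plan is to handle the simplest cases first and then bootstrap the remaining ones from them.

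For (i), I would amalgamate two copies of $\m{A}\nsum\Go_2$ over $\m{A}\nsum\Go_1$, sending the unique non-unit element of the top $\Go_1$ to the two distinct non-unit elements of $\Go_2$ in the two copies; Lemma~\ref{l:nsum-embed-equiv} then forces the Gödel summand above $\m{A}$ in the amalgam to contain at least three non-unit elements, yielding $\m{A}\nsum\Go_3\in\Vfc$, and iterating yields $\m{A}\nsum\Go_n$ for every $n\geq 1$. Clause (ii) is analogous at the Sugihara-skeleton level: amalgamating $\Com{0}{0}\nsum\Com{0}{0}\nsum\m{B}$ with itself over the common subalgebra $\Com{0}{0}\nsum\m{B}$, identified once with the upper and once with the lower $\Com{0}{0}$-block, produces three stacked $\Com{0}{0}$-blocks above $\m{B}$; iteration gives every $k\geq 1$.

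Items (iii)--(vi) amalgamate at the level of individual $\Com{\cdot}{\cdot}$-blocks. For (iii), amalgamate two copies of $\Com{2}{n}$ over $\Com{1}{n}$ by shifting the $b$-indices; by Lemmas~\ref{l:embed-equiv}~and~\ref{l:nsum-embed-equiv}, the amalgam must contain a single block with at least four $b$-elements and $n+1$ $a$-elements, giving $\Com{3}{n}$, and iteration produces $\Com{m}{n}$ for every $m$; then $\Go_m$ follows by applying Lemma~\ref{l:contract-skeleton} with the bottom summand trivial. Part (iv) is symmetric on the $a$-side, using $\Com{m}{1}\hookrightarrow\Com{m}{2}$. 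For (v), amalgamate $\Com{m}{0}$ and $\Com{0}{n}$ over $\Com{0}{0}$: by Lemma~\ref{l:nsum-embed-equiv} the amalgam contains a single block $\Com{r}{s}$ with $r\geq m$ and $s\geq n$, hence contains $\Com{m}{n}$ as a subalgebra. For (vi), amalgamate $\m{A}\nsum\Com{0}{0}\nsum\m{B}$ with $\Com{m}{n}$ over the central $\Com{0}{0}$; Lemma~\ref{l:nsum-embed-equiv} forces the middle block of the amalgam to contain $\Com{m}{n}$, yielding $\m{A}\nsum\Com{m}{n}\nsum\m{B}$ as a subalgebra. Clause (vii) follows the same template, amalgamating $\m{A}\nsum\Go_1$ with $\Go_p$ over $\Go_1$ to fatten the top Gödel summand and then passing to a subalgebra.

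The main obstacle I anticipate is the uniform verification, via Lemma~\ref{l:nsum-embed-equiv}, that the amalgam really has the nested-sum shape needed to extract the target algebra. The amalgamation property only guarantees the \emph{existence} of some finite amalgam, so in each clause one must simultaneously control the index-set embedding $f$ and the block embeddings $\{h_i\}$, and argue that no extraneous blocks are introduced between the fixed parts $\m{A}$ and $\m{B}$. This rigidity is most delicate in (vi) and (vii), where the common subalgebra sits in the middle of the nested-sum decomposition rather than at its top or bottom, and where one must argue that the target algebra appears as a genuine subalgebra and not merely as a quotient.
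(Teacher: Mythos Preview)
Your overall strategy is correct and matches the paper's: in each clause, construct a suitable span, amalgamate within $\Vfc$, and extract the target algebra as a subalgebra of the amalgam. The specific spans you propose are essentially those the paper uses (sometimes with cosmetic differences in how the induction step is phrased).

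Two points deserve comment. First, Lemma~\ref{l:ap-1ap-chains} gives only the \emph{one-sided} amalgamation property for $\Vfc$, so in each amalgam $(\m{D},j_1,j_2)$ only $j_1$ is guaranteed to be an embedding. The paper closes this gap by invoking Lemma~\ref{l:inj-hom}: in every span it constructs, the common subalgebra contains a subcover of $\ut$ whose image under $j_2$ is forced (by the commuting square $j_1 i_1 = j_2 i_2$) to lie strictly below $\ut$, whence $j_2$ is injective as well. You do not mention this step; without it the extraction argument does not go through.

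Second, the obstacle you anticipate---controlling the nested-sum shape of the full amalgam $\m{D}$ via Lemma~\ref{l:nsum-embed-equiv}---is real in your framing but is entirely sidestepped in the paper. Rather than analysing $\m{D}$ globally, the paper simply sets $S = \im(j_1)\cup\im(j_2)$, observes that $S$ contains $\ut$ and is closed under $x\mapsto x^\star$ (because each image is), and concludes from Lemma~\ref{l:idempotentops} that $S$ is the universe of a subalgebra $\m{S}$ of $\m{D}$. The isomorphism type of $\m{S}$ is then read off directly from the explicit ordering of the finitely many elements in $\im(j_1)\cup\im(j_2)$, with no need to worry about extraneous blocks of $\m{D}$ or the rigidity of index-set embeddings. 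This is shorter and eliminates precisely the delicacy you flag in (vi) and (vii).
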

\begin{proof}
Note first that, since $\V$ has the amalgamation property, by Lemma~\ref{l:ap-1ap-chains}, $\Vfc$ has the one-sided amalgamation property. In the following we will use this without mentioning it explicitly. 

(i) Suppose that  $\m{A} \nsum \Go_2 \in \V$. We prove the claim by induction on $n$. For $n=1$ note that $\m{A} \nsum \Go_1$ is a subalgebra of $\m{A} \nsum\Go_2$. Suppose that the claim holds for $n\geq 1$, i.e., $\m{A} \nsum\Go_n\in \Vfc$. 
Consider the span of embeddings $i_1\colon \m{A} \nsum \Go_1 \to \m{A} \nsum \Go_2$, $i_1(c_1) = c_2$ and $i_1(x) = x$ for $x\in A$,  and $i_2\colon \m{A} \nsum \Go_1 \to \m{A} \nsum \Go_n$, $i_2(c_1) = c_1$ and $i_1(x) = x$ for $x\in A$. 
Then there exists an algebra $\mathbf{D} \in \Vfc$, an embedding $j_1\colon \m{A} \nsum  \Go_2 \to \m{D}$, and a homomorphism $j_2 \colon \m{A} \nsum  \Go_n \to \m{D}$ such that $j_1 \circ i_1 = j_2 \circ i_2$. 
Since $c_1$ covers $\ut$ in $\m{A} \nsum \Go_n$ and $j_2(c_1) = j_2(i_2(c_1)) = j_1(i_1(c_1)) < \ut$, by Lemma~\ref{l:inj-hom}, also $j_2$ is an embedding, so we have $j_2(c_n) < \dots < j_2(c_1) = j_1(c_2) < j_1(c_1) < \ut$. Let $S = \im(j_1) \cup \im(j_2)$. Then $\ut\in S$ and $S$ is closed under $\inv{ }$, by construction. Hence, by Lemma~\ref{l:idempotentops}, it is the universe of a subalgebra $\m S$ of $\m D$ which is clearly isomorphic to $\m{A} \nsum \Go_{n+1}$.

(ii) Suppose that $\Com{0}{0} \nsum \Com{0}{0} \nsum \m{B} \in \Vfc$. We prove the claim by induction on $k$. For $k=1$ note that $\Com{0}{0} \nsum \m{B}$ is a subalgebra of $\Com{0}{0} \nsum \Com{0}{0} \nsum \m{B}$. Suppose that the claim holds for $k\geq 1$, i.e., $(\Nsum_{i=1}^k \Com{0}{0} )\nsum \m{B} \in \Vfc$, and consider the span of embeddings $i_1 \colon \Com{0}{0} \nsum \m{B} \to \Com{0}{0} \nsum \Com{0}{0} \nsum \m{B}$, $i_1(x) = x$ for $x \in B$, $i_1(b_0) = b_0^1$, $i_1(a_0) = a_0^1$ and  $i_2 \colon \Com{0}{0} \nsum \m{B} \to (\Nsum_{i=1}^k \Com{0}{0} )\nsum \m{B}$, $i_2(x) = x$ for $x \in B$, $i_2(b_0) = b_0^k$, $i_2(a_0) = a_0^1$. Then there exists an algebra $\m{D} \in \Vfc$, an embedding $j_1 \colon \Com{0}{0} \nsum \Com{0}{0} \nsum \m{B} \to \m{D}$, and a homomorphism $j_2 \colon (\Nsum_{i=1}^k \Com{0}{0} )\nsum \m{B} \to \m{D}$ such that $j_1\circ i_1 = j_2 \circ i_2$. Let $S = \im(j_1) \cup \im(j_2)$. Then $\ut\in S$ and $S$ is closed under $\inv{ }$, by construction. Hence, by Lemma~\ref{l:idempotentops}, it is the universe of a subalgebra $\m S$ of $\m D$.  Since $j_2$ restricts to an embedding on $\Com{0}{0} \nsum \m{B}$, by Lemma~\ref{l:inj-hom}, it is also an embedding.  For each $x\in B$,
\begin{align*}
&j_2(b_0^1) < \dots < j_2(b_0^k) = j_1(b_0^1) < j_1(b_0^2) < j_1(x)  = j_2(x)\\
&j_2(x) < j_1(a_0^2) < j_1(a_0^1) = j_2(a_0^k) < \dots < j_2(a_0^1),
\end{align*}
and it is straightforward to check that  $\m S$ is isomorphic to  $(\Nsum_{i=1}^{k+1} \Com{0}{0} )\nsum \m{B}$.

(iii) Suppose that $\Com{2}{n} \in \Vfc$ for  some $n\in \N$. We prove the claim by induction on $m$. First note that, since $\Go_m$ is a homomorphic image of $\Com{m}{n}$, it suffices to show that $\Com{m}{n}\in \Vfc$ for each $m\in \N$.
For $m=1$ note that $\Com{1}{n}$ is a subalgebra of $\Com{2}{n}$. Suppose that the claim holds for $m\geq 1$, i.e., $\Com{m}{n}\in \Vfc$. We consider the span of embeddings $i_1 \colon \Com{1}{n} \to \Com{2}{n}$, $i_1(x) = x$ and $i_2 \colon \Com{1}{n} \to \Com{m}{n}$, $i_2(b_1) = b_m$, $i_2(x) = x$ for $x\neq b_1$.  Then there exists an algebra $\m{D} \in \Vfc$, an embedding $j_1 \colon \Com{2}{n}  \to \m{D}$, and a homomorphism $j_2\colon \Com{m}{n} \colon  \to \m{D}$ such that $j_1\circ i_1 = j_2 \circ i_2$. Let $S = \im(j_1) \cup \im(j_2)$. Then $\ut\in S$ and $S$ is closed under $\inv{ }$, by construction. Hence, by Lemma~\ref{l:idempotentops}, it is the universe of a subalgebra $\m{S}$ of $\m{D}$.  Since $j_2$ restricts to an embedding on $\Com{0}{0}$, by Lemma~\ref{l:inj-hom}, it is also an embedding and we get $j_2(b_1) < \dots < j_2(b_m) = j_1(b_1) < j_1(b_2)$ and $j_1(a_i) = j_2(a_i)$ for $1\leq i \leq n$. If follows that $\m{S}$ is isomorphic to $\Com{m+1}{n}$.

(iv) The proof is very similar to (iii).

(v) Suppose that $\Com{m}{0}, \Com{0}{n} \in \Vfc$ for $m,n \in \Vfc$ and consider the span of inclusions $i_1\colon \Com{0}{0} \to \Com{m}{0}$, $i_1(x) = x$ and $i_2 \colon \Com{0}{0} \to \Com{m}{0}$, $i_2(x) = x$.   Then there exists an algebra $\m{D} \in \Vfc$, an embedding $j_1 \colon \Com{m}{0}  \to \m{D}$, and a homomorphism $j_2\colon \Com{0}{n} \colon  \to \m{D}$ such that $j_1\circ i_1 = j_2 \circ i_2$. Let $S = \im(j_1) \cup \im(j_2)$. Then $\ut\in S$ and $S$ is closed under $\inv{ }$, by construction. Hence, by Lemma~\ref{l:idempotentops}, it is the universe of a subalgebra $\m S$ of $\m D$.  Since $j_2$ restricts to an embedding on $\Com{0}{0}$, by Lemma~\ref{l:inj-hom}, it is also an embedding, so $\m S$ is isomorphic to $\Com{m}{n}$.

(vi) Suppose that $\m{A} \nsum \Com{0}{0} \nsum \m{B} \in \Vfc$ and $\Com{m}{n} \in \Vfc$.
 We consider the span of inclusions $i_1 \colon \Com{0}{0} \to \m{A} \nsum \Com{0}{0} \nsum \m{B}$, $i_1(x) = x$ and $i_2 \colon \Com{0}{0} \to \Com{m}{n}$, $i_2(x) = x$. 
Then there exists an algebra $\m{D} \in \Vfc$, an embedding $j_1 \colon \m{A} \nsum \Com{0}{0} \nsum \m{B} \to \m{D}$, and a homomorphism $j_2 \colon \Com{m}{n} \to \m{D}$ such that $j_1\circ i_1 = j_2 \circ i_2$.  Let $S = \im(j_1) \cup \im(j_2)$. Then $\ut\in S$ and $S$ is closed under $\inv{ }$, by construction. Hence, by Lemma~\ref{l:idempotentops}, it is the universe of a subalgebra $\m S$ of $\m D$.
Since $j_2$ restricts to an embedding on $\Com{0}{0}$, by Lemma~\ref{l:inj-hom}, it is also an embedding and it is straightforward to check that  $\m S$ is isomorphic to $\m{A} \nsum \Com{m}{n} \nsum \m{B}$.

(vii) Suppose that  $\m{A} \nsum \Go_1 \in \Vfc$ and $\Go_p \in \Vfc$. For $p=1$ there is nothing to prove, so we may assume that $p>1$. Consider the span of embeddings $i_1\colon \Go_1 \to \Go_p$, $i_1(x) = x$ and $i_2 \colon \Go_1 \to \m{A} \nsum \Go_1$, $i_2(x) = x$.  Then there exists an algebra $\m{D} \in \Vfc$, an embedding $j_1 \colon \Go_p \to \m{D}$, and a homomorphism $j_2 \colon \m{A} \nsum \Go_1 \to \m{D}$ such that $j_1\circ i_1 = j_2 \circ i_2$.  Let $S = \im(j_1) \cup \im(j_2)$. Then $\ut\in S$ and $S$ is closed under $\inv{ }$, by construction. Hence, by Lemma~\ref{l:idempotentops}, it is the universe of a subalgebra $\m S$ of $\m D$. Moreover, since $j_2$ restricts to an embedding on $\Go_1$, by Lemma~\ref{l:inj-hom}, it is also an embedding and it is easy to check that $\m S$ is isomorphic to $\m{A} \nsum \Go_p$.
\end{proof}

We finally arrive at the proof of part (i) of Theorem~\ref{t:main_finite}, recalling that part (ii) and  Theorem~\ref{t:main_mcs} follow directly from this result.

\begin{proof}[Proof of Theorem~\ref{t:main_finite}(\textnormal{i})]
Let $\V$ be a variety of commutative idempotent semilinear residuated lattices that has the amalgamation property. It suffices to show that $\V$ is one of the varieties from Lemma~\ref{l:com-amalg-vars}. Let $\K = \Vfc$. We make a case distinction on whether algebras from the set $\{\Go_p, \Com{0}{n}, \Com{m}{0}, \Com{0}{0} \nsum \Go_p \mid m,n,p \in \{0,1, 2 \} \} \cup \{ \Com{0}{0} \nsum \Com{0}{0}  \}$ are contained in $\K$, noting that $\Go_1 \leq \Go_2$, $\Com{0}{0} \leq \Com{1}{0} \leq \Com{2}{0}$, $\Com{0}{0} \leq \Com{0}{1} \leq \Com{0}{2}$, and $\Com{0}{0} \leq \Com{0}{0} \nsum \Com{0}{0}$, and $\Com{0}{0}, \Go_p\leq \Com{0}{0} \nsum \Go_p$, and $\Go_m \in \hm(\Com{m}{0})$, by Lemma~\ref{l:contract-skeleton}.

If $\Com{0}{0} \notin \K$, then $\K \in \{ \Ee{p} \mid p \in \{0,1,\omega \} \}$, yielding $\V \in \{ \vr(\Ee{p}) \mid p \in \{0,1,\omega \} \}$. So we may assume that $\Com{0}{0} \in \K$ and let $n := \sup \{ s \in \N \mid \Com{0}{s} \in \K \}$, assuming $\sup \N = \omega$. For $\Com{0}{0} \nsum \Com{0}{0} \notin \K$, there are 11 cases:
\begin{enumerate}

\item  If  $\Com{2}{0}, \Com{0}{0} \nsum \Go_{1} \in \K$, then, by Lemma~\ref{l:AP-closure},   $\K = \Fin{\omega}{\omega}{n}$, yielding $\V = \vr(\Fin{\omega}{\omega}{n})$.

\item  If $\Com{2}{0} \notin \K$ and  $\Com{1}{0},\Com{0}{0} \nsum \Go_{2}   \in \K$, then, by Lemma~\ref{l:AP-closure},   $\K = \Fin{1}{\omega}{n}$, yielding $\V = \vr(\Fin{1}{\omega}{n})$.

\item  If $\Com{1}{0} \notin \K$ and  $\Com{0}{0} \nsum \Go_{2}   \in \K$, then, by Lemma~\ref{l:AP-closure},   $\K = \Fin{0}{\omega}{n}$, yielding $\V = \vr(\Fin{0}{\omega}{n})$.

\item  If $\Com{2}{0}, \Go_2 \notin \K$ and  $\Com{1}{0}, \Com{0}{0} \nsum \Go_{1}   \in \K$, then, by Lemma~\ref{l:AP-closure},   $\K = \Fin{1}{1}{n}$, yielding $\V = \vr(\Fin{1}{1}{n})$.

\item  If $\Com{1}{0}, \Go_2 \notin \K$ and  $\Com{0}{0} \nsum \Go_{1}   \in \K$, then, by Lemma~\ref{l:AP-closure},   $\K = \Fin{0}{1}{n}$, yielding $\V = \vr(\Fin{0}{1}{n})$.

\item  If $\Com{1}{0}, \Go_1 \notin \K$, then, by Lemma~\ref{l:AP-closure},  we get  $\K = \Fin{0}{0}{n}$, yielding $\V = \vr(\Fin{0}{0}{n})$.

\item If $\Com{0}{0} \nsum \Go_1 \notin \K$ and $ \Com{2}{0} \in \K$, then, by Lemma~\ref{l:AP-closure},  $\K = \Fin{\omega}{0}{n} \cup \Ee{\omega}$, yielding $\V = \vr(\Fin{\omega}{0}{n}\cup \Ee{\omega})$. 

\item If $\Com{0}{0} \nsum \Go_1, \Com{2}{0} \notin \K$ and $ \Com{1}{0}, \Go_2 \in \K$, then, by Lemma~\ref{l:AP-closure},  $\K = \Fin{1}{0}{n} \cup \Ee{\omega}$, yielding $\V = \vr(\Fin{1}{0}{n}\cup \Ee{\omega})$. 

\item If $\Com{0}{0} \nsum \Go_1, \Com{1}{0} \notin \K$ and $\Go_2 \in \K$, then, by Lemma~\ref{l:AP-closure},  $\K = \Fin{0}{0}{n} \cup \Ee{\omega}$, yielding $\V = \vr(\Fin{0}{0}{n}\cup \Ee{\omega})$. 

\item If $\Com{0}{0} \nsum \Go_1, \Com{2}{0}, \Go_2 \notin \K$ and $ \Com{1}{0} \in \K$, then, by Lemma~\ref{l:AP-closure},  $\K = \Fin{1}{0}{n} \cup \Ee{1}$, yielding $\V = \vr(\Fin{1}{0}{n}\cup \Ee{1})$. 

\item If $\Com{0}{0} \nsum \Go_1, \Com{1}{0}, \Go_2 \notin \K$ and $ \Go_1 \in \K$ , then, by Lemma~\ref{l:AP-closure},  $\K = \Fin{0}{0}{n} \cup \Ee{1}$, yielding $\V = \vr(\Fin{0}{0}{n}\cup \Ee{1})$. 
\end{enumerate}
For $\Com{0}{0} \nsum \Com{0}{0} \in \K$, there are another 8 cases:

\begin{enumerate}

\item  If  $\Com{2}{0} \in \K$, then, by Lemma~\ref{l:AP-closure},   $\K = \Inf{\omega}{\omega}{n}$, yielding $\V = \vr(\Inf{\omega}{\omega}{n})$.

\item  If $\Com{2}{0} \notin \K$ and  $\Com{1}{0}, \Go_{2}   \in \K$, then, by Lemma~\ref{l:AP-closure},   $\K = \Inf{1}{\omega}{n}$, yielding $\V = \vr(\Inf{1}{\omega}{n})$.

\item  If $\Com{1}{0} \notin \K$ and  $\Com{0}{0} \nsum \Go_{2}   \in \K$, then, by Lemma~\ref{l:AP-closure},   $\K = \Inf{0}{\omega}{n}$, yielding $\V = \vr(\Inf{0}{\omega}{n})$.

\item  If $\Com{2}{0}, \Go_2 \notin \K$ and  $\Com{1}{0}  \in \K$, then, by Lemma~\ref{l:AP-closure},   $\K = \Inf{1}{1}{n}$, yielding $\V = \vr(\Inf{1}{1}{n})$.

\item  If $\Com{1}{0}, \Go_2 \notin \K$ and  $\Com{0}{0} \nsum \Go_{1}   \in \K$, then, by Lemma~\ref{l:AP-closure},   $\K = \Inf{0}{1}{n}$, yielding $\V = \vr(\Inf{0}{1}{n})$.

\item  If $\Com{1}{0}, \Go_1 \notin \K$, then, by Lemma~\ref{l:AP-closure},  we get  $\K = \Inf{0}{0}{n}$, yielding $\V = \vr(\Inf{0}{0}{n})$.

\item If $\Com{0}{0} \nsum \Go_1\notin \K$ and $\Go_2 \in \K$, then, by Lemma~\ref{l:AP-closure},  $\K = \Inf{0}{0}{n} \cup \Ee{\omega}$, yielding $\V = \vr(\Inf{0}{0}{n}\cup \Ee{\omega})$. 

\item If $\Com{0}{0} \nsum \Go_1, \Go_2 \notin \K$ and $ \Go_1 \in \K$ , then, by Lemma~\ref{l:AP-closure},  $\K = \Inf{0}{0}{n} \cup \Ee{1}$, yielding $\V = \vr(\Inf{0}{0}{n}\cup \Ee{1})$.   \qed
\end{enumerate}\noqed
\end{proof}

If we drop the assumption that $\ut\eq\zr$, then there are additional cases we must consider in proving an analogue of Theorems~\ref{t:main_finite} and \ref{t:main_mcs}. Let $\V$ be a variety of commutative idempotent semilinear pointed residuated lattices. Again, it is enough to consider the class $\Vfc$ of the finite totally ordered members of $\V$, since the results we used above also work in the pointed case as well. For each algebra $\tuple{A, \mt, \jn, \cdot, \to, \ut, \zr} =\m{A} \in \Vfc$ one of the following conditions holds:
\begin{enumerate}
    \item $\zr$ is on the Sugihara skeleton of $\m{A}$
    		\begin{enumerate}
   			 \item and $\zr = \ut$ in $\m{A}$, or
			 \item and $\zr < \ut$ in $\m{A}$, or
			 \item and $\zr > \ut$ in $\m{A}$.
    		 \end{enumerate}
			\item $\zr$ is not on the Sugihara skeleton of $\m{A}$
		\begin{enumerate}
			\item and $\zr < \ut$ and $\inv{\zr} = \ut$ in $\m{A}$, or
			\item and $\zr < \ut$ and $\inv{\zr} > \ut$ in $\m{A}$, or
			\item and $\zr >\ut$  in $\m{A}$.
		\end{enumerate}
\end{enumerate} 
For $i \in  \{1,2\}$ and $x \in \{\text{a},\text{b},\text{c}\}$ we define the class 
\[
\V_{i(x)} = \{ \m{A} \in \Vfc \mid \m{A} \text{ satisfies condition } i.(x) \}.
\]
Note that $\partition{\V} = \{\V_{i(x)}\mid i \in  \{1,2\},\, x \in \{\text{a},\text{b},\text{c}\} \}$  forms a partition of $\Vfc$ such that no algebra from one class embeds into an algebra from another class, since the conditions are preserved by embeddings. Thus, to check whether $\V$ has the amalgamation property, it is enough to check whether each of the classes in $\partition{\V}$ has the one-sided amalgamation property. In particular, to show that there are only finitely many varieties of commutative idempotent semilinear pointed residuated lattices with the amalgamation property it is enough to show that there are only finitely many possibilities for $\partition{\V}$ such that each of the classes in $\partition{\V}$ has the one-sided amalgamation property.  Let $\m{A}$ be a pointed residuated lattice and $\m{B}$ a residuated lattice. We will write  $\m{A} \simeq \m{B}$ if $\m{B}$ is (isomorphic to) the $\zr$-free reduct of $\m{A}$. 

For $i \in \{1, 2 \}$ and $x \in \{\text{a},\text{b},\text{c} \}$ we define $\m{S}_{i(x)}$ to be the smallest idempotent commutative chain satisfying $i.(x)$. Note that $\m{S}_{i(x)}$ is well-defined and for each algebra $\m{A}$ satisfying $i.(x)$ we have that $\m{S}_{i(x)}$ is isomorphic to the subalgebra of $\m{A}$ generated by $\zr$. In fact, the algebra $\m{S}_{1(\text{a})}$ is just a trivial algebra, $\m{S}_{1(\text{b})} \simeq \Com{0}{0}$ with $\zr = b_0$, and  $\m{S}_{1(\text{c})} \simeq \Com{0}{0}$ with $\zr = a_0$. Moreover, $\m{S}_{2(\text{a})} \simeq \Go_1$ with $\zr \simeq c_1$, $\m{S}_{2(\text{b})} \simeq \Com{1}{0}$ with $\zr = b_1$, and  $\m{S}_{2(\text{c})} \simeq \Com{0}{1}$ with $\zr = a_1$. 

In what follows, the nested sums $\m{A}\nsum \m{B}$ and $\m{B}\nsum \m{A}$  of an idempotent pointed residuated chain $\m{A}$ with an idempotent residuated chain $\m{B}$ are understood in the obvious way, i.e., as the nested sum of the $\zr$-free reducts of $\m{A}$ with $\m{B}$ with constant $\zr$ from $\m{A}$. 

\begin{lemma}\label{l:AP-closure-pointed}
Let $\V$ be a variety of commutative idempotent semilinear pointed residuated lattices with the amalgamation property.
\begin{enumerate}[label = \textup{(\roman*)}]
\item Let $i \in \{1,2\}$, $x \in \{\textup{b},\textup{c}\}$. If $\m{A}$ is a commutative idempotent semilinear pointed residuated chain that satisfies condition $i.(x)$, then $\m{A}$ is of the form $\m{A}_1 \nsum \m{A}_2 \nsum \m{A}_3$ with $\zr \in A_2$ and  $\m{A}_2 \simeq \Com{m}{n}$. 
Moreover, $\m{A} \in \V_{i(x)}$ if and only if $\m{A}_1 \nsum \m{S}_{i(x)} \in \V_{i(x)}$,  $\m{A}_2 \in \V_{i(x)}$, and $\m{S}_{i(x)} \nsum \m{A}_3 \in \V_{i(x)}$.
\item If $\m{A}$ is a commutative idempotent semilinear pointed residuated chain that satisfies condition $2.({\textup{a}})$, then $\m{A}$ is of the form $\m{A}_1 \nsum \m{A}_2$ with $\zr \in A_2$ and $\m{A}_2 \simeq \Go_p$ for some $p>0$. Moreover, $\m{A} \in \V_{2(\textup{a})}$ if and only if $\m{A}_1 \nsum \m{S}_{2(\textup{a})} \in \V_{2(\textup{a})}$ and $\m{A}_2 \in \V_{2( \textup{a} )}$.
\end{enumerate}
\end{lemma}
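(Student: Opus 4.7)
My plan is to establish the structural decomposition first, and then prove the forward and backward directions of the moreover claim using the structure theorem for finite commutative idempotent residuated chains (Lemma~\ref{l:comm-decomp}) together with the one-sided amalgamation property of $\Vfc$, which holds by the (pointed analogue of the) characterization in Lemma~\ref{l:ap-1ap-chains} since $\V$ has the amalgamation property.

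The structural decomposition follows by applying Lemma~\ref{l:comm-decomp} to $\m{A}$ and performing a case analysis on where $\zr$ sits in the resulting nested sum $(\Nsum_{i=1}^k \Com{m_i}{n_i}) \nsum \Go_p$: in parts $1.(\textup{b}), 1.(\textup{c}), 2.(\textup{b}), 2.(\textup{c})$ the element $\zr$ lies in a unique Sugihara factor $\Com{m_j}{n_j}$, and in part $2.(\textup{a})$ it lies in the G\"odel factor $\Go_p$; we take this distinguished factor as $\m{A}_2$ and let $\m{A}_1$ and (in part~(i)) $\m{A}_3$ be the nested sums of the remaining factors on either side. A direct computation of $x^\ell$ and $x^r$ inside $\Com{m}{n}$ shows that the pointed subalgebra of $\m{A}_2$ generated by $\zr$ is isomorphic to $\m{S}_{i(x)}$, and Proposition~\ref{p:sum-map} then yields that $\m{A}_1 \nsum \m{S}_{i(x)}$, $\m{A}_2$, and $\m{S}_{i(x)} \nsum \m{A}_3$ (when defined) all embed as pointed subalgebras of $\m{A}$, giving the forward direction of the moreover claim.

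For the backward direction of part~(i), my plan is to carry out two successive one-sided amalgamations in $\Vfc$. In Step~1, we amalgamate $\m{A}_1 \nsum \m{S}_{i(x)}$ with $\m{S}_{i(x)} \nsum \m{A}_3$ over their common subalgebra $\m{S}_{i(x)}$, letting $j_1$ embed $\m{S}_{i(x)} \nsum \m{A}_3$ and $j_2$ be a homomorphism from $\m{A}_1 \nsum \m{S}_{i(x)}$; the subcover of $\ut$ in $\m{A}_1 \nsum \m{S}_{i(x)}$ is $b_0 \in \m{S}_{i(x)}$, so by Lemma~\ref{l:inj-hom} the map $j_2$ is automatically an embedding. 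In Step~2, we amalgamate the resulting algebra $\m{A}_1 \nsum \m{S}_{i(x)} \nsum \m{A}_3$ with $\m{A}_2$ over $\m{S}_{i(x)}$, with $j_1$ embedding $\m{A}_1 \nsum \m{S}_{i(x)} \nsum \m{A}_3$ and $j_2$ a homomorphism from $\m{A}_2$; since the subcover of $\ut$ in $\m{A}_2 \simeq \Com{m}{n}$ is $b_0 \in \m{S}_{i(x)}$, the map $j_2$ is again an embedding. For part~(ii) only a single amalgamation is needed: amalgamate $\m{A}_1 \nsum \m{S}_{2(\textup{a})}$ with $\m{A}_2 = \Go_p$ over $\m{S}_{2(\textup{a})}$, taking $j_1$ to embed $\Go_p$ and $j_2$ to be a homomorphism from $\m{A}_1 \nsum \m{S}_{2(\textup{a})}$; the subcover of $\ut$ in $\m{A}_1 \nsum \m{S}_{2(\textup{a})}$ is $\zr$ itself, so $j_2$ is an embedding irrespective of where $\zr$ sits inside $\Go_p$.

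In each step, Lemma~\ref{l:idempotentops} ensures that $\im(j_1) \cup \im(j_2)$, which contains $\ut$ and is closed under $^\ell$ and $^r$, is the universe of a pointed subalgebra $\m{S}$ of the amalgam $\m{D}$. The main technical point, which I expect to amount to careful bookkeeping, is verifying that $\m{S}$ is isomorphic as a pointed residuated chain to the target nested sum. Lemma~\ref{l:nsum-embed-equiv} should handle this: the shared base pins down a single component of $\m{D}$'s nested sum decomposition, and the remaining factors of each source fall on the expected sides, so the Sugihara-skeleton structure of $\m{S}$ and the location of $\zr$ inside $\m{A}_2$ exactly match those of the target. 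Since condition $i.(x)$ is determined purely by the position of $\zr$ and is preserved by embeddings, $\m{D}$ (and hence $\m{S}$) lies in $\V_{i(x)}$, yielding the required membership.
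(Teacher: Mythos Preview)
Your proposal is correct and takes essentially the same approach as the paper. The paper's own proof is extremely brief: it appeals to Lemma~\ref{l:comm-decomp} for the structural decomposition, notes that the forward direction is trivial (the three pieces are subalgebras of $\m{A}$), and says the backward direction is ``very similar to the proof of Lemma~\ref{l:AP-closure}''; your proposal spells out exactly that argument, using one-sided amalgamation in $\Vfc$ together with Lemma~\ref{l:inj-hom} (applied to the subcover $b_0$, respectively $\zr$, lying in the common base $\m{S}_{i(x)}$) to upgrade the second map to an embedding, and Lemma~\ref{l:idempotentops} to recognise $\im(j_1)\cup\im(j_2)$ as a subalgebra isomorphic to the target nested sum.
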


\begin{proof}
The first part of (i) and (ii) follows from Lemma~\ref{l:comm-decomp} applied to the $\zr$-free reducts.
For the second part the left-to-right directions are trivial and the proof of the right-to-left directions is very similar to the proof of Lemma~\ref{l:AP-closure}.
\end{proof}

By $\pc{SemFL_{ecm}}$ we mean the axiomatic extension of $\pc{FL_{ecm}}$ associated with the variety of commutative semilinear idempotent pointed residuated lattices.

\begin{prop}\
\begin{enumerate}[label = \textup{(\roman*)}]
\item There are only finitely many varieties of commutative semilinear idempotent pointed residuated lattices that have the amalgamation property.
\item There are only finitely many axiomatic extensions of $\pc{SemFL_{ecm}}$ with the deductive interpolation property.
\end{enumerate}
\end{prop}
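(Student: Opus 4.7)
The plan is first to reduce (ii) to (i). The variety of commutative idempotent semilinear pointed residuated lattices is locally finite — the argument of \cite[Corollary~3.6]{GilJipMet2020} adapts routinely, since adding the constant $\zr$ to any finitely generated subalgebra adds no new elements — and has the congruence extension property by Lemma~\ref{l:CEP}. Proposition~\ref{p:bridge} then identifies the deductive interpolation property with the amalgamation property for its subvarieties, so (ii) follows from (i).

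For (i), I would proceed in parallel with the proof of Theorem~\ref{t:main_finite}(i). The pointed analogue of Lemma~\ref{l:ap-1ap-chains} says that a variety $\V$ of commutative idempotent semilinear pointed residuated lattices has the amalgamation property iff $\Vfc$ has the one-sided amalgamation property. Since the defining conditions of the classes $\V_{i(x)}$ are preserved under embeddings and are pairwise incompatible, no algebra in one $\V_{i(x)}$ embeds into an algebra from another, so any span in $\Vfc$ lies entirely within a single class. Thus $\V$ has the amalgamation property iff each $\V_{i(x)}$ has the one-sided amalgamation property, and $\V$ is determined by the tuple $(\V_{i(x)})_{i,x}$. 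Finiteness of the number of varieties with the amalgamation property therefore reduces to showing that for each of the six pairs $(i,x)$ there are only finitely many possibilities for $\V_{i(x)}$.

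Now fix $(i,x)$ and assume $\V_{i(x)}$ has the one-sided amalgamation property. By Lemma~\ref{l:AP-closure-pointed}, every chain in $\V_{i(x)}$ splits as $\m{A}_1 \nsum \m{A}_2 \nsum \m{A}_3$, or as $\m{A}_1 \nsum \m{A}_2$ when $(i,x)=(2,\textrm{a})$, with $\m{A}_2$ containing $\zr$ and having $\zr$-free reduct isomorphic to some $\Com{m}{n}$ or $\Go_p$. The class $\V_{i(x)}$ is then determined by three pieces of data: the class of left parts $\m{A}_1$ with $\m{A}_1 \nsum \m{S}_{i(x)} \in \V_{i(x)}$, the class of admissible central parts $\m{A}_2$, and the class of right parts $\m{A}_3$ with $\m{S}_{i(x)} \nsum \m{A}_3 \in \V_{i(x)}$. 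Pointed analogues of Lemma~\ref{l:AP-closure}, whose proofs require only minor adjustments because the $\zr$-free reducts behave exactly as before while $\zr$ stays anchored in the fixed central component $\m{S}_{i(x)}$, force the left and right classes to be generated by finite chains falling among (the $\zr$-free reducts of) the sixty varieties classified in Theorem~\ref{t:main_finite}(i), and pin the central class down by finitely many parameter bounds on $m$, $n$, or $p$ (each either bounded by a small constant or taking value $\omega$). Multiplying the finitely many possibilities across the six classes yields a finite upper bound.

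The main obstacle will be the volume of the case analysis: each of the six partition classes demands its own closure treatment, and the interactions between the left, central, and right components must respect the position of $\zr$ when constructing amalgams — in particular, the pointed version of each sub-claim of Lemma~\ref{l:AP-closure} must be verified separately for central anchors of the form $\m{S}_{1(\textrm{b})}, \m{S}_{1(\textrm{c})}, \m{S}_{2(\textrm{a})}, \m{S}_{2(\textrm{b})}, \m{S}_{2(\textrm{c})}$. No new conceptual ingredients are needed beyond the spanning-and-subalgebra constructions already deployed for Lemma~\ref{l:AP-closure}, and the crude product bound suffices for the qualitative claim, consistent with the introduction's remark that the precise count exceeds $12{,}000{,}000$.
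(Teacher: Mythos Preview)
Your proposal is correct and follows essentially the same route as the paper's proof sketch: partition $\Vfc$ into the six classes $\V_{i(x)}$, reduce to showing each has only finitely many possibilities, and then use Lemma~\ref{l:AP-closure-pointed} to break each class into left, central, and right pieces whose options are bounded by adapting the closure arguments of Lemma~\ref{l:AP-closure}. The paper likewise presents this as a sketch with the detailed case analysis suppressed, so your identification of the volume of cases as the main obstacle matches the paper's own treatment.
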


\begin{proof}[Proof sketch]
(i) Let $\V$ be a variety of commutative semilinear idempotent pointed residuated lattices that has the amalgamation property. By the above discussion it is enough to show that there are only finitely many possibilities for $\partition{\V} = \{ \V_{i(x)}\mid  i \in \{1,2\},\, x \in \{\text{a},\text{b},\text{c} \} \}$ such that each class has the one-sided amalgamation property. For $\V_{1(\text{a})}$ we have already classified all the 60 possibilities. 

For $i \in  \{1,2\}$ and $x \in \{\text{b},\text{c} \}$, by Lemma~\ref{l:AP-closure-pointed}(i), it is enough to separately classify which algebras of the form $\m{A}_1 \nsum \m{S}_{i(x)}$  $\m{A}_2\simeq \Com{m}{n}$, and $\m{S}_{i(x)} \nsum \m{A}_3$ are contained in $\V_{i(x)}$. First we notice that, by adapting the proof of Lemma~\ref{l:AP-closure}, we get that for the class $\{  \m{A}  \in \V_{i(x)}\mid \m{A} \cong \m{S}_{i(x)} \nsum \m{A}_3 \}$ there are essentially the same 60 possibilities as in the case where $\ut\eq\zr$, and similarly for  the class $\{ \m{A} \in  \V_{i(x)}:  \m{A} \cong \m{A}_1 \nsum \m{S}_{i(x)} \} $.
Moreover, it can be shown by using similar\footnote{Roughly speaking, there are at most three places where extra points can be added to $\m{S}_{i(x)}$: Below $\zr$ in the `same block', above $\zr$ in the `same block' (if $\zr$ is not on the Sugihara skeleton), and below $\inv{\zr}$ in the block of $\inv{\zr}$. In each of these places the possibilities are either add no point, at most one point, or two and more points, similarly to Lemma~\ref{l:AP-closure}(i).} closure properties as in Lemma~\ref{l:AP-closure} that there are only finitely many possibilities for the class $\{ \m{A}_2 \in \V_{i(x)} \mid \m{A}_2 \simeq \Com{m}{n} \}$. 
Thus for $i \in \{1,2\}$ and $x \in \{\text{b},\text{c} \}$  there are only finitely many possibilities for $\V_{i(x)}$.

For $i= 2$ and $x=\text{a}$ we note that, by Lemma~\ref{l:AP-closure-pointed}(ii), it is enough to separately classify which algebras of the form $\m{A}_1 \nsum \m{S}_{2(\text{a})}$ and which algebras $\m{A}_2$ with $\m{A}_2 \simeq \Go_p$ and $p>1$ are contained in $\V_{2(\text{a})}$. As above it follows that for the class $\{\m{A} \in \V_{2(\text{a})} \mid  \m{A} \cong \m{A}_1 \nsum \m{S}_{2(\text{a})} \}$, there are at most 60 possibilities and for the class $\{ \m{A} \in \V_{2(\text{a})}  \mid \m{A} \simeq \Go_p \text{ for some } p>1 \}$ it can again be shown using similar closure properties as in Lemma~\ref{l:AP-closure} that there are only finitely many possibilities. Hence there are only finitely may possibilities for $\V_{2(\text{a})}$. 

Part (ii) then follows because $\pc{SemFL_{ecm}}$ is algebraized by the variety of commutative idempotent semilinear pointed residuated lattices.
\end{proof}

If we let $\V_{1(\text{a})} = \Ee{\omega}$ with $\zr$ interpreted as $\ut$ and let $\V_{2(\text{a})}$ be the class of all commutative idempotent pointed residuated chains $\m{A}$ with $\m{A} \simeq \Go_p$ for some $p\in \N$ that satisfy $2.(\text{a})$, then there are at least 60 choices for $\V_{1(\text{b})}$, 60 choices for $\V_{1(\text{c})}$,  60 choices for $\V_{2(\text{b})}$, and 60 choices for $\V_{2(\text{c})}$ such that the obtained classes together form $\partition{\V}$ for some variety $\V$ that has the amalgamation property. That there are 60 choices in each case follows by considering the cases where all chains in the respective class are of the form $\m{S}_{i(x)} \nsum \m{A}$, where at least 60 suitable choices for classes of  algebras $\m{A}$ may be found, as in the case for $\ut\eq\zr$.
That these choices actually form $\partition{\V}$ for some variety $\V$ follows from the fact that a homomorphic image of an algebra of the form $\m{S}_{i(x)} \nsum \m{A}$ for $x\in \{\text{b},\text{c} \}$ will always be either contained again in $\V_{i(x)}$, in $\V_{1(\text{a})}$, or in $\V_{2(\text{a})}$. Finally, to show that these classes all have the one-sided amalgamation property follows analogously as in the case where $\ut\eq\zr$. Thus we obtain the following result:

\begin{prop}\
\begin{enumerate}[label = \textup{(\roman*)}]
\item There are at least $60^4 > 12,000,000$  varieties of commutative idempotent semilinear pointed residuated lattices that have the amalgamation property.
\item There are at least $60^4 > 12,000,000$ axiomatic extensions of $\pc{SemFL_{ecm}}$ with the deductive interpolation property
\end{enumerate}
\end{prop}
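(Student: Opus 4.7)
The plan is to exhibit at least $60^4$ pairwise distinct varieties $\V$ of commutative idempotent semilinear pointed residuated lattices that have the amalgamation property, by independently specifying four of the six partition classes $\V_{i(x)}$ while fixing the remaining two at large ``default'' values. Concretely, I would set $\V_{1(\text{a})}$ to be the $\zr$-expansion of $\Ee{\omega}$ (with $\zr$ interpreted as $\ut$), and take $\V_{2(\text{a})}$ to consist of all pointed chains whose $\zr$-free reduct is of the form $\Go_p$ with $p\in\N$ and which satisfy condition $2.(\text{a})$. These choices are designed to absorb every homomorphic image that can arise by collapsing the $\zr$-block of a chain from one of the other partition classes.

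For each $i\in\{1,2\}$ and $x\in\{\text{b},\text{c}\}$, I would restrict attention to classes consisting of those chains in $\V_{i(x)}$ whose $\zr$-block coincides with $\m{S}_{i(x)}$, i.e., chains of the form $\m{S}_{i(x)} \nsum \m{A}$ for $\m{A}$ an idempotent semilinear residuated chain. Since $\m{S}_{i(x)}$ is fixed and its top summand in this decomposition is the trivial one, specifying such a class reduces to specifying a class $\K$ of commutative idempotent semilinear residuated chains (without constant) to appear as the summand $\m A$. By a direct translation of the classification in Theorem~\ref{t:main_finite}(i) applied to $\K$, I expect exactly $60$ admissible choices in each case, yielding $60^4$ candidate varieties $\V$. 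Distinctness across choices is clear, since the four pieces are determined by disjoint classes of generating chains.

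Next, I would verify that each resulting $\V$ really has the amalgamation property. Because the partition conditions $i.(x)$ are preserved by embeddings, no span of finite totally ordered members of $\V$ crosses between distinct classes of $\partition{\V}$, and hence by Lemma~\ref{l:ap-1ap-chains} it suffices to establish the one-sided amalgamation property separately inside each of the six classes. The base classes $\V_{1(\text{a})}$ and $\V_{2(\text{a})}$ have it by (the argument of) Lemma~\ref{l:com-amalg-vars}, and for $\V_{i(x)}$ with $x \in \{\text{b},\text{c}\}$, I would use Lemma~\ref{l:AP-closure-pointed}(i) to decompose a span along the $\zr$-block and amalgamate on either side by the sixty-case argument, then glue using Proposition~\ref{p:sum-map}. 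The global consistency check is that the homomorphic image of any $\m{S}_{i(x)} \nsum \m{A} \in \V_{i(x)}$ obtained by collapsing $\m{S}_{i(x)}$ lands in $\V_{1(\text{a})}$ or $\V_{2(\text{a})}$, which is ensured by our maximal choices of those base classes.

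The main obstacle will be the bookkeeping needed to confirm that the amalgamation inside each $\V_{i(x)}$ can be carried out without forcing new elements into the $\zr$-block, which would violate condition $i.(x)$; this is most delicate in the cases $2.(\text{b})$ and $2.(\text{c})$, where $\m{S}_{i(x)}$ already contains elements on both sides of $\zr$ outside the Sugihara skeleton and interacts nontrivially with the summand $\m A$. I expect this to be handled by a careful variant of Lemma~\ref{l:amalg-for-components}(ii) applied block-by-block. Once part (i) is in hand, part (ii) follows immediately from Proposition~\ref{p:bridge} together with the fact that $\pc{SemFL_{ecm}}$ is algebraized by the variety of commutative idempotent semilinear pointed residuated lattices.
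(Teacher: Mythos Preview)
Your proposal is correct and follows essentially the same approach as the paper: fix $\V_{1(\text{a})}$ and $\V_{2(\text{a})}$ at their maximal values to absorb homomorphic images, and for each of the four remaining classes $\V_{i(x)}$ with $x\in\{\text{b},\text{c}\}$ consider chains of the form $\m{S}_{i(x)}\nsum\m{A}$, reducing to the sixty choices of the $\ut\eq\zr$ classification. Your worry about amalgamation forcing new elements into the $\zr$-block is somewhat overblown in this restricted setting, since in chains of the form $\m{S}_{i(x)}\nsum\m{A}$ the nested-sum structure keeps the $\m{S}_{i(x)}$ component rigid and the amalgamation reduces directly to amalgamating the $\m{A}$ parts; no block-by-block variant of Lemma~\ref{l:amalg-for-components}(ii) is needed.
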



\appendix
\section{The full Lambek calculus}\label{a:FL}

\begin{figure}[t]
\centerline{
\fbox{
\begin{minipage}{10cm}
\begin{align*}
\begin{array}{lcl}
\text{Identity Axioms} &  &\text{Cut Rule}\\[.1in]
\infer[\pfa{\idr}]{\a\seq\a}{} & &\infer[\pfa{\cutr}]{\Ga_1,\Ga_2,\Ga_3\seq\De}{\Ga_2\seq\a &\Ga_1,\a,\Ga_3\seq\De}\\[.2in]
\text{Left Operation Rules} & &\text{Right Operation Rules}\\[.1in]
\infer[\pfa{\tlr}]{\Ga_1,\ut,\Ga_2\seq\De}{\Ga_1,\Ga_2\seq\De} &  &\infer[\pfa{\trr}]{\seq\ut}{}\\[.15in]
\infer[\pfa{\flr}]{\zr\seq}{} &  &\infer[\pfa{\frr}]{\Ga\seq\zr}{\Ga\seq}\\[.15in]
\infer[\pfa{\rdlr}]{\Ga_1,\be\rd\a,\Ga_2,\Ga_3\seq\De}{\Ga_2\seq\a &\Ga_1,\be,\Ga_3\seq\De} &  & 
\infer[\pfa{\rdrr}]{\Ga\seq\be\rd\a}{\Ga,\a\seq\be}\\[.15in]
\infer[\pfa{\ldlr}]{\Ga_1,\Ga_2,\a\ld\be,\Ga_3\seq\De}{\Ga_2\seq\a &\Ga_1,\be,\Ga_3\seq\De} &  & 
\infer[\pfa{\ldrr}]{\Ga\seq\a\ld\be}{\a,\Ga\seq\be}\\[.15in]
\infer[\pfa{\pdlr}]{\Ga_1,\a\pd\be,\Ga_2\seq\De}{\Ga_1,\a,\be,\Ga_2\seq\De} &  &
\infer[\pfa{\pdrr}]{\Ga_1,\Ga_2\seq\a\pd\be}{\Ga_1\seq\a &\Ga_2\seq\be}\\[.15in]
\infer[\pfa{\alr_1}]{\Ga_1,\a\mt\be,\Ga_2\seq\De}{\Ga_1,\a,\Ga_2\seq\De} & & 
\infer[\pfa{\orr_1}]{\Ga\seq\a\jn\be}{\Ga\seq\a}\\[.15in]
\infer[\pfa{\alr_2}]{\Ga_1,\a\mt\be,\Ga_2\seq\De}{\Ga_1,\be,\Ga_2\seq\De} & & 
\infer[\pfa{\orr_2}]{\Ga\seq\a\jn\be}{\Ga\seq\be}\\[.15in]
\infer[\pfa{\olr}]{\Ga_1,\a\jn\be,\Ga_2\seq\De}{\Ga_1,\a,\Ga_2\seq\De &\Ga_1,\be,\Ga_2\seq\De}
 & &\infer[\pfa{\arr}]{\Ga\seq\a\mt\be}{\Ga\seq\a &\Ga\seq\be}
\end{array}
\end{align*}
\caption{The full Lambek calculus $\pc{FL}$}
\label{fig:FL}
\end{minipage}}}
\end{figure}

We restrict our attention in this appendix to formulas constructed over a countably infinite set of variables using the binary operation symbols $\mt,\jn,\pd,\ld,\rd$ and constants $\ut,\zr$, and define a {\em sequent} to be an ordered pair consisting of a finite sequence $\Ga$ of formulas and a sequence $\De$ of at most one formula, denoted by $\Ga\seq\De$. Concatenations of finite sequences are denoted using commas, and empty sequences by an empty space. The {\em full Lambek calculus} $\pc{FL}$ displayed in Figure~\ref{fig:FL} consists of a set of sequent rules presented schematically, where the (indexed) symbols $\Ga$ and $\Pi$ stand for arbitrary finite sequences of formulas, $\De$ for an arbitrary sequence of at most one formula, and $\a$ and $\be$ for arbitrary formulas. Further well-studied sequent calculi are obtained by adding the basic structural rules of exchange (e), weakening (i) and (o), contraction (c), and mingle (m), depicted in Figure~\ref{fig:structural}.

A {\em derivation} in a sequent calculus $\pc{C}$ of a sequent $S$ from a set of sequents $R$ is a finite tree of sequents with root $S$ such that each node is either a member of $R$ or constructed from its parent nodes using a rule of $\pc{C}$. Given a set of formulas $T\cup\{\a\}$, we write $T\der{\pc{C}}\a$ if there exists a derivation of $\seq\a$ from the set of sequents $\{\seq\be\mid\be\in T\}$. In the case of $\pc{FL}$ and its extensions with basic structural rules, the relation $\der{\pc{C}}$ is a {\em deductive system}: a relation $\der{}$ between sets of formulas and formulas that satisfies, for any set of formulas $T\cup T'\cup\{\a\}$:
\begin{enumerate}
\item if $\a\in T$, then $T\der{}\a$;
\item if $T\der{}\a$ and $T'\der{}\be$ for every $\be\in T$, then $T'\der{}\a$;
\item if $T\der{}\a$, then $\sigma[T]\der{}\sigma(\a)$ for any substitution $\sigma$;
\item if $T\der{}\a$, then there exists a finite $T_0\subseteq T$ such that $T_0\der{}\a$.
\end{enumerate}
Moreover, it is common, when no confusion may occur, to denote by $\pc{C}$ the deductive system $\der{\pc{C}}$ induced by a sequent calculus $\pc{C}$. 

Given any deductive system $\der{}$ and set of formulas $W$, there exists a smallest deductive system $\der{}'$ extending $\der{}$ such that $\der{}'\a$ for each $\a\in W$, referred to as an {\em axiomatic extension} of $\der{}$. Every axiomatic extension of $\pc{FL}$ is  algebraizable in the sense of~\cite{BP89} with respect to a variety of pointed residuated lattices. For any formula $\a$ and  equation $\be\eq\ga$, let \begin{align*}
\tau(\a):=\alpha\mt\ut\eq\ut, & & \rho(\be\eq\ga):=(\be\rd\ga)\mt(\ga\rd\be).
\end{align*} 
The `transformers' $\tau$ and $\rho$ translate between axiomatic extensions of $\pc{FL}$ and equational consequence relations of varieties of pointed residuated lattices. For any axiomatic extension $\der{\pc{L}}$ of $\pc{FL}$, let $\V_\pc{L}$ be the variety of pointed residuated lattices satisfying $\{\tau(\a)\mid~\der{\pc{L}}\a\}$, and for any variety of pointed residuated lattices $\V$, let $\der{\pc{L}_\V}$ be the axiomatic extension of $\pc{FL}$ such that $\der{\pc{L}}\rho(\be\eq\ga)$ whenever $\mdl{\V}\be\eq\ga$. Then, for every set of formulas $T\cup\{\a\}$ and set of equations $\Si\cup \set{\be\eq\ga}$ in the language of pointed residuated lattices,
\begin{align*}
T \der{\pc{L}}\a &\iff \tau[T] \mdl{\pc{L}_\V} \tau(\a) \\
\Si \models_{\V_\pc{L}} \be \eq \ga &\iff \rho[\Si] \der{\pc{L}_\V} \rho(\be\eq\ga) \\
\a & \  \dashv\der{\pc{L}_\V} \rho[\tau(\alpha)] \\
\be \eq \ga & \  \leftmodels \mdl{\V_\pc{L}} \tau[\rho(\be\eq\ga)],
\end{align*}
where $T_1\der{}T_2$ abbreviates $T_1\der{}\be$ for each $\be\in T_2$, $T_1\dashv\der{\pc{L}}T_2$ abbreviates $T_1\der{\pc{L}}T_2$ and $T_2\der{\pc{L}}T_1$, and likewise for $ \models_{\V}$.

The maps $\der{\pc{L}}\mapsto\V_\pc{L}$ and $\V\mapsto\,\der{\pc{L}_\V}$ are mutually inverse, and give a dual lattice isomorphism between the lattice of axiomatic extensions of $\pc{FL}$ and the lattice of varieties of pointed residuated lattices. In particular, the axiomatic extensions of $\der{\pc{L}}$ are in bijective correspondence with subvarieties of $\V_\pc{L}$ for any axiomatic extension $\der{\pc{L}}$ of $\pc{FL}$.

\bibliographystyle{asl}
\bibliography{bibliography}

\end{document}